\documentclass[letterpaper, 11pt,  reqno]{amsart}

\usepackage{amsmath,amssymb,amscd,amsthm,amsxtra, esint}

\headheight=8pt
\topmargin=0pt
\textheight=624pt
\textwidth=432pt
\oddsidemargin=18pt
\evensidemargin=18pt

\allowdisplaybreaks[2]

\sloppy

\hfuzz  = 0.5cm 


\newtheorem{theorem}{Theorem} [section]

\newtheorem{lemma}[theorem]{Lemma}
\newtheorem{proposition}[theorem]{Proposition}
\newtheorem{remark}[theorem]{Remark}

\newtheorem{definition}[theorem]{Definition}
\newtheorem{corollary}[theorem]{Corollary}


\DeclareMathOperator*{\supp}{supp}

\newcommand{\I}{\hspace{0.5mm}\text{I}\hspace{0.5mm}}
\newcommand{\II}{\text{I \hspace{-2.8mm} I} }
\newcommand{\III}{\text{I \hspace{-2.9mm} I \hspace{-2.9mm} I}}

\newcommand{\noi}{\noindent}
\newcommand{\Z}{\mathbb{Z}}
\newcommand{\R}{\mathbb{R}}
\newcommand{\C}{\mathbb{C}}
\newcommand{\T}{\mathbb{T}}

\newcommand{\RR}{\mathbf{R}}

\newcommand{\K}{\mathbf{K}}

\newcommand{\F}{\mathcal{F}}

\def\norm#1{\|#1\|}

\newcommand{\al}{\alpha}
\newcommand{\be}{\beta}
\newcommand{\dl}{\delta}

\newcommand{\Dl}{\Delta}
\newcommand{\eps}{\varepsilon}

\newcommand{\ld}{\lambda}
\newcommand{\Ld}{\Lambda}
\newcommand{\s}{\sigma}

\newcommand{\ft}{\widehat}

\newcommand{\wt}{\widetilde}
\newcommand{\cj}{\overline}

\renewcommand{\l}{\ell}
\renewcommand{\o}{\omega}
\renewcommand{\O}{\Omega}

\newcommand{\les}{\lesssim}
\newcommand{\ges}{\gtrsim}

\newcommand{\jb}[1]
{\langle #1 \rangle}

\newcommand{\ind}{\mathbf 1}

\newcommand{\bn}{{\bf n}}

\newcommand{\bx}{{\bf x}}
\newcommand{\ba}{{\bf a}}

\newcommand{\bj}{{\bf j}}

\numberwithin{equation}{section}
\numberwithin{theorem}{section}

\begin{document}


\title[Strichartz estimates
on irrational tori]
{Strichartz estimates for Schr\"odinger equations
on irrational tori}

\author[Z.~Guo, T.~Oh, and Y.~Wang]{Zihua Guo, Tadahiro Oh, and Yuzhao Wang}

\address{
Zihua Guo\\
School of Mathematical Sciences\\
Peking University\\
Beijing 100871, China} \email{zihuaguo@math.pku.edu.cn}

\address{
Tadahiro Oh\\
School of Mathematics\\
The University of Edinburgh\\
and The Maxwell Institute for the Mathematical Sciences\\
James Clerk Maxwell Building\\
The King's Buildings\\
Mayfield Road\\
Edinburgh\\
EH9 3JZ, Scotland\\
and
Department of Mathematics\\
Princeton University\\
Fine Hall\\
Washington Rd.\\
Princeton, NJ 08544-1000, USA}

\email{hiro.oh@ed.ac.uk}

\address{
Yuzhao Wang\\
Department of Mathematics and Physics\\
North China Electric Power University\\
Beijing 102206, China}
\curraddr{Department of Mathematics and Statistics\\ 
Memorial University\\
St. John's, NL A1C 5S7, Canada}
\email{wangyuzhao2008@gmail.com}

\subjclass[2010]{35Q55, 42B37}

\keywords{nonlinear Schr\"odinger equation; irrational torus; Strichartz estimate; well-posedness}

\dedicatory{Dedicated to the memory of Professor Harold N. Shapiro (1922--2013)}

\begin{abstract}
In this paper, we prove new Strichartz estimates for linear Schr\"odinger
equations posed on $d$-dimensional irrational tori.
Then, we  use these estimates to prove  subcritical and
critical local well-posedness results
for nonlinear Schr\"odinger equations (NLS) on irrational tori.
\end{abstract}

\maketitle

\baselineskip = 15pt

\section{Introduction}\label{SEC:1}


\subsection{Background}
The Cauchy problem of the nonlinear Schr\"odinger equation (NLS):
\begin{equation}
\begin{cases}\label{NLS0}
i \partial_t u - \Delta u = \pm |u|^{p-1}u \\
u\big|_{t = 0} = u_0 \in H^s(M),
\end{cases}
\qquad (\bx, t) \in M \times \R
\end{equation}

\noi has been studied extensively in different settings (for
example, $M = \R^d$, $\T^d$, and certain classes of manifolds) over
recent years \cite{GV2, Tsutsumi, Kato, CazW2, Bo2, Bo1,BGT1, BGT2,
KochT, HTT11, HTT2, Herr, W}. See also the following monographs
\cite{SULEM, Caz, TAO} for  references therein. In the study of NLS
(1.1), Strichartz estimates of the following type have played a
fundamental role
\begin{equation}\label{IntroStr}
\| e^{- it\Dl} f\|_{L^q_t L^r_\bx (\R\times M)} \les
\|f\|_{H^s_\bx(M)},
\end{equation}

\noi
where
$\|f\|_{L_t^qL_\bx^r}=\big\|\norm{f(\bx, t)}_{L^r_\bx}\|_{L_t^q}$.\footnote{
We use $A\lesssim B$ to denote an estimate of the form $A\le CB$ for some $C>0$. 
Similarly, we use $A\sim B$ to denote $A\lesssim B$ and
$B\lesssim A$.}
In particular, when $M = \R^d$, \eqref{IntroStr} is known to hold
with $s = 0$, namely
\begin{equation}\label{IntroStr2}
\| e^{- it\Dl} f\|_{L^q_t L^r_\bx (\R\times \R^d)} \les
\|f\|_{L^2_\bx(\R^d)},
\end{equation}

\noi
if and
only if
 $(q, r)$ satisfies  $\frac{2}{q} + \frac{d}{r} = \frac{d}{2}$ with $2\leq q, r \leq \infty$
and $(q, r, d) \ne (2, \infty, 2)$. 
See \cite{Strichartz, Yajima, GV, KeelTao}.
This was first obtained
for the case $q=r$ by Strichartz \cite{Strichartz} via
the Fourier restriction method. 
It was then generalized by a
combination of the duality argument and the following dispersive estimate:
\begin{equation}\label{eq:dis}
\| e^{- it\Dl} f\|_{L^\infty_\bx (\R^d)} \lesssim
|t|^{-\frac d2}\|f\|_{L^1_\bx(\R^d)}.
\end{equation}

\noi
The endpoint case $(q,r)=(2,\frac{2d}{d-2})$, $d \ne 2$,  was then proven in
\cite{KeelTao}.

Now, consider
$f\in L^2(\R^d)$ with $\supp \ft f \subset [-N, N]^d$.
Then, as an immediate corollary to \eqref{IntroStr2}, we have the following
 Strichartz estimate on $\R^d$:
\begin{align}
\| e^{-it \Delta} f\|_{L^p_{t,\bx}(\R \times \R^d)} \lesssim  N^{\frac{d}{2} - \frac{d+2}p}\|f\|_{L^2(\R^d)},
\label{IntroStr3}
\end{align}

\noi
 for $\frac{2(d+2)}{d} \leq p \leq \infty$
on $\R^d$.
Indeed, 
%
on the one hand, 
 the Strichartz estimate  \eqref{IntroStr2} with $q = r = \frac{2(d+2)}{d}$ gives
\begin{equation}\label{P3}
\|e^{-it \Dl}f\|_{L^\frac{2(d+2)}{d}_{t, \bx}(\R\times \R^d)}
\les \|f\|_{L^2(\R^d)}.
\end{equation}

\noi
On the other hand, by Bernstein's inequality \cite[Chapter 5]{Wo}, we have 
\begin{equation}\label{P4}
\|e^{-it \Dl}f\|_{L^\infty_{t, \bx}(\R\times \R^d)}
\les N^\frac{d}{2} \|f\|_{L^2(\R^d)}, 
\end{equation}

\noi
for all $f\in L^2(\R^d)$ with $\supp \ft f \subset [-N, N]^d$.
By interpolating \eqref{P3} and \eqref{P4}, 
we see that 
the estimate \eqref{IntroStr3}  holds for $\frac{2(d+2)}{d} \leq p \leq \infty$
and is sharp in view of sharpness of \eqref{P3} and \eqref{P4}.
Note that the estimate \eqref{IntroStr3} is {\it scaling-invariant}
in the following sense.
Consider the linear Schr\"odinger equation:
\begin{equation}
\begin{cases}
i \partial_t u - \Delta u = 0\\
u\big|_{t = 0} = f.
\end{cases}
\label{linSchro}
\end{equation}

\noi
The solution $u$ to \eqref{linSchro} is given by $u (\bx, t) := e^{-it\Dl} f(\bx)$.
Then, the rescaled function $u^\ld(\bx, t) : = u(\ld \bx, \ld^2 t)$, $\ld > 0$, 
is also a solution to \eqref{linSchro} but with the rescaled initial condition
$f^\ld(\bx): = f(\ld \bx)$.
Noting that $\supp \ft{f^\ld} = \ld \cdot \supp \ft f$, 
it is easy to see that 
the power of $N$ in \eqref{IntroStr3}
is the only power that is consistent with this scaling.
We point out that the inequalities \eqref{IntroStr2}, \eqref{P3}, and \eqref{P4}
are also scaling-invariant with respect to this scaling
associated to the linear Schr\"odinger equation
and that the scaling-invariance shows sharpness of these estimates.

When $M$ is a compact manifold, the Strichartz estimate \eqref{IntroStr}
becomes much more difficult and much less is known. 
This is partially due to the fact that we do not
have the dispersive estimate \eqref{eq:dis} on a compact manifold.
Moreover, \eqref{IntroStr}
requires deep understanding of the eigenvalues and the eigenfunctions of the Laplacian. 
In the following, 
we  focus on the case when $M$ is a standard flat torus $\T^d = (\R/ \Z)^d$,
corresponding to the usual periodic boundary condition.
Moreover, we restrict our attention to the diagonal case, i.e.~$q = r$.
Then,  one would like to establish the following scaling-invariant\footnote{Obviously, the scaling associated to the linear Schr\"odinger equation discussed above for $\R^d$ does not quite make sense on $\T^d$.
We nonetheless call the estimate \eqref{eq:stritori} scaling-invariant.} Strichartz estimate: 
\begin{align}\label{eq:stritori}
\| e^{-it \Delta} f\|_{L^p_{t,\bx}(I \times \T^d)} \lesssim
N^{\frac{d}{2} - \frac{d+2}p}\|f\|_{L^2(\T^d)},
\end{align}
for all  $f \in L^2(\T^d)$ with $\supp \ft f \subset [-N, N]^d$,
where $I$ is a compact interval.
Note that,
 in the compact setting,  an estimate of the form \eqref{eq:stritori} does not 
hold with $I = \R$, unless $p = \infty$.

By drawing an analogy to the Euclidean case $M = \R^d$, 
one may hope to have \eqref{eq:stritori}
for  $p \geq \frac{2(d+2)}{d} $.
By combining the tools from number theory, such as a divisor counting argument 
and the Hardy-Littlewood circle method,
and the Tomas-Stein restriction method from harmonic analysis, 
Bourgain \cite{Bo2} proved 
\eqref{eq:stritori} for certain ranges of $p$:
(i) $ p > \frac{2(d+2)}{d}$ when $ d = 1, 2$, (ii) $p > 4$ when $ d=  3$, and
(iii)  $p > \frac{2(d+4)}d$ for higher dimensions $d \geq 4$. 
It is worthwhile to note
that, when $d = 1, 2$, \eqref{eq:stritori} is known to fail at the
endpoint $p = \frac{2(d+2)}{d}$. See \cite{Bo2, TT}. 
Namely, the situation on $\T^d$ is strictly worse than the Euclidean setting.
Indeed,  Bourgain \cite{Bo2,Bo4} conjectured that
\begin{align}\label{P01}
\| e^{-it \Delta} f\|_{L^p_{t,\bx}(I \times \T^d )} \le  K_{p,
N}\|f\|_{L^2(\T^d)},
\end{align}

\noi
for all   $f \in L^2(\T^d)$ with $\supp \ft f \subset [-N, N]^d$,
where $K_{p,N}$ satisfies 
\begin{equation}
\begin{cases}
K_{p,N} < c_p,     & \text{if } p < \frac{2(d+2)}{d},  \\
K_{p,N} \ll N^\eps,   &\text{if }  p=\frac{2(d+2)}{d},  \\
K_{p,N} < c_p N^{\frac{d}2 - \frac{d+2}p}, \quad   &
\text{if } p>\frac{2(d+2)}{d},
\end{cases}\label{conj}
\end{equation}

\noi
for any small $\eps > 0$.
Here,  $A(N) \ll B(N)$ means that 
$ \lim_{N\to
\infty} \frac{A(N)}{B(N)} = 0$. 
More recently,
using multilinear restriction theory after \cite{BCT, BG}, Bourgain
\cite{Bo5}   improved the result (iii) for $d \geq 4$ and showed
that \eqref{eq:stritori} holds for $p > \frac{2(d+3)}d$. 
The general conjecture \eqref{conj}, however, remains open up to date.
In \cite{Bo2, Bo1,HTT11, W}, these Strichartz estimates were then applied
to prove well-posedness results of NLS \eqref{NLS0} on $\T^d$.
See Subsection \ref{SUBSEC:1.3} for more on the well-posedness issue of \eqref{NLS0}.

Let us conclude this subsection by stating the result 
by Herr \cite{Herr}.
He considered  the quintic NLS on a three-dimensional Zoll manifold $M$,
i.e.~a compact Riemannian manifold such that all geodesics are simple
and closed with a  {\it common minimal period}.
One simplest example is 
the three dimensional sphere $\mathbb{S}^3$.
By establishing the Strichartz estimate \eqref{eq:stritori}
on $M$ (instead of $\T^d$) with $p >4$, 
he proved local well-posedness of the quintic NLS 
on a three-dimensional Zoll manifold $M$
in the energy space $H^1(M)$.
As mentioned above, 
all geodesics on a Zoll manifold have a  common minimal period.
Hence, it is natural to ask 
if  a Strichartz estimate of the form \eqref{eq:stritori} holds
on a manifold,  
where there is no common minimal period for geodesics.
This leads us to the study of Strichartz estimates
on 
an {\it irrational torus}
$\T^d_{\pmb{\al}}$,
since it is one of the simplest examples of manifolds
with no common minimal period for geodesics.

\subsection{Strichartz estimate on irrational tori}
In the remaining part of this paper, we focus on the case 
when $M$ is an irrational torus $\T^d_{\pmb{\al}}$:
\begin{equation}
 M = \T^d_{\pmb{\al}}:= \prod_{j = 1}^d \R/ ( \al_j \Z), \quad \al_j > 0,\  j = 1, \dots,
 d.
\label{Torus}
 \end{equation}

\noi 
As the name suggests, we are mainly interested
in the case when at least one $\al_j$ is irrational.
More generally, 
we are interested in the case
when at least one $\al_{j}$ is ``rationally independent''
of the remaining ones, i.e.~there exists $\al_{j}$ that can not be written as a linear 
combination of the other $\al_k$'s with rational coefficients.

First consider the case when all $\al_j$'s are rational.
Namely, $M = \T^d_{\pmb{\al}}$ is a ``rational'' torus.
In this case, the problem can be reduced to that on the standard torus $\T^d$
by a simple geometric consideration.
By writing  $\al_j = \frac{k_j}{m_j}$ for some $k_j, m_j \in \mathbb{N}$, 
let $k$ be the least common multiple of $k_j$'s.
The basic idea is to view 
the scaled standard torus  $\wt M: = k  \T^d = \big( \R/(k \Z) \big)^d$
as a disjoint union of  parallel translates of the original rational torus $M = \T^d_{\pmb{\al}}$
with $\al_j^{-1} k$ copies in the $x_j$-direction.

Now, consider the linear Schr\"odinger equation \eqref{linSchro}
on $M = \T^d_{\pmb{\al}}$.
By periodic extension, 
we can view this problem on the scaled standard torus $\wt M = k  \T^d $.
Given an initial condition $f$ and the solution $u(t) = e^{-it \Dl} f$ on $M$, 
let $\wt f$ and $\wt u$ denote their periodic extensions
on $\wt M$, respectively.
By uniqueness of solutions to the linear Schr\"odinger equation, 
we see that $\wt u(t) = e^{-it \Dl} \wt f$ on $\wt M$.
Clearly, the Strichartz estimates on the standard torus $\T^d$
also hold on the scaled standard torus $\wt M = k \T^d $,
where the implicit constants further depend on $k$.
With $\pmb{\al} = (\al_1, \dots, \al_d)$, we have 
\[\|\wt f \|_{L^2_\bx(\wt M)} = C(\pmb{\al}) \|f \|_{L^2_\bx(M)}
\quad \text{and}\quad 
\|\wt u(t) \|_{L^p_\bx(\wt M)} = C(\pmb{\al}, p) \|u(t) \|_{L^p_\bx(M)}.
\]

\noi
Moreover, 
letting 
$\ft f$ and  $\wt{\mathcal{F}} [\wt f]$ 
denote the  Fourier coefficients of $f$ on $\T^d_{\pmb{\al}}$
and $\wt f$ on $k \T^d$, respectively, we have  
\begin{align*}
 \ft f (\bn )
&  = \frac{1}{|\T^d_{\pmb{\al}}|}
\int_{\T^d_{\pmb{\al}}} f(\bx) e^{-2\pi i \sum_{j = 1}^d n_j \frac{  x_j}{\al_j}} d\bx\\
& = \frac{1}{|k \T^d|}
\int_{k \T^d} \wt f(\bx) e^{-2\pi i \sum_{j = 1}^d \frac{k n_j}{\al_j} \frac{x_j}{k}} d\bx
= \wt{\mathcal{F}}[\wt f]
\big(\tfrac{k}{\al_1}n_1, \dots, \tfrac{k}{\al_d}n_d\big),
\end{align*}

\noi
where $\bn = (n_1, \dots, n_d) \in \Z^d$
and $|\,\cdot\,|$ denotes the Lebesgue measure of a set. 
Namely, we have
\[ \supp \wt{\mathcal{F}}[\wt f] 
= \frac{k}{\pmb{\al}}\cdot \supp \ft f
:=
\Big\{\big(\tfrac{k}{\al_1}n_1, \dots, \tfrac{k}{\al_d}n_d\big) \in \Z^d:\,
\bn \in \supp \ft f \, \Big\}.
\]

\noi
Therefore, 
we see that the Strichartz estimates of the form \eqref{P01}
on the standard torus $\T^d$
also hold  
on our rational torus $M = \T^d_{\pmb{\al}}$,
where the implicit constants further depends
on $\pmb{\al} = (\al_1, \dots, \al_d)$.
When there is no  $\al_{j}$ that is rationally independent
of the remaining ones, 
we can use  spatial and temporal dilations 
to reduce the situation to the case of a rational torus above.
Therefore, in the following, we assume that 
 at least one $\al_{j}$ is rationally independent
of the remaining $\al_k$'s.

Before proceeding further, let us  change the spatial domain 
 $M = \T^d_{\pmb{\al}}$ to the standard torus $\T^d$
 at the expense of modifying the Laplacian.
By a change of spatial variables ($x_j \mapsto \al_j x_j$),
we see that 
 \eqref{NLS0} is equivalent to the following NLS on the
usual torus $\T^d = (\R / \Z)^d$:
\begin{equation}
\begin{cases}\label{eq:nls}
i \partial_t u - \Delta u = \pm |u|^{p-1}u\\
u\big|_{t = 0} = u_0\in H^s(\T^d),
\end{cases}
 \quad (\bx, t) \in \T^d\times  \R,
\end{equation}

\noi where the Laplace operator $\Delta$ is now defined by
\begin{equation}\label{eq:Q0}
\widehat{\Delta f}(\bn) =  - 4 \pi^2 Q(\bn)\widehat f(\bn),
\end{equation}

\noi with $\bn = (n_1,\dots, n_d) \in \Z^d$ and
\begin{align}\label{eq:Q}
Q(\bn) = \theta_1 n_1^2 + \cdots + \theta_d n_d^2, \quad
\tfrac{1}{C}\leq\theta_j: = \tfrac{1}{\al_j^2}\leq C,  \  j=1,\cdots,d.
\end{align}

\noi
We point out that some estimates in the following depend
on $C$ in \eqref{eq:Q} but not on the specific arithmetic nature of $\theta_j$'s.

Our main interest is to discuss well-posedness of  the Cauchy problem \eqref{eq:nls}
by first studying relevant Strichartz estimates in this setting.
As compared to the problem on the standard torus $\T^d$, 
i.e.~with $Q(\bn) = |\bn|^2 = \sum_{j = 1}^d n_j^2$, 
it is a lot harder to study Strichartz estimates on irrational tori. 
The main reason for this difficulty
is that the number theoretic tools such as 
a divisor counting argument and the Hardy-Littlewood circle method 
do not work well in this setting.

Previously, Bourgain \cite{Bo4} and Catoire-Wang \cite{CW}
 studied the Cauchy problem \eqref{eq:nls} on irrational tori
and proved some local well-posedness results in subcritical Sobolev
spaces. See Theorem \ref{THM:3} below. In the following, we
investigate new Strichartz estimates on irrational tori and use them
to prove well-posedness results of the Cauchy problem \eqref{eq:nls}
in both subcritical and critical Sobolev spaces. In the rest of the
paper, we assume that the Laplacian $\Delta$ is defined by \eqref{eq:Q0}, unless
stated otherwise, and define
the linear Schr\"odinger evolution by\footnote{Strictly speaking,
 there is an extra factor of $2\pi$ in front of $Q(\bn)$ in \eqref{eq:Q}.
 However, such a factor can be eliminated by
 time dilation and thus, for simplicity of notations,  we drop it  in the
 following.}\begin{align}\label{lin-evo}
e^{-it\Delta} f (\bx) = \sum_{\bn\in \Z^d} \ft f(\bn) e^{2\pi i (\bn \cdot
\bx + Q(\bn) t)},
\end{align}

\noi
where $Q(\bn)$ is as in \eqref{eq:Q}.
We first summarize the known Strichartz estimates.
In the following,  $I$ denotes a compact interval in $\R$.

\begin{theorem}\label{THM:1}
The Strichartz estimate on a irrational torus is known to hold
\begin{align}
\| e^{-it \Delta} f\|_{L^p_{t,\bx}(I \times \T^d )} \lesssim  K_{p, N}\|f\|_{L^2(\T^d)},
\label{P0}
\end{align}

\noi
for all   $f \in L^2(\T^d)$ with $\supp \ft f \subset [-N, N]^d$
in the following cases:
\begin{itemize}
\item[(i)]  $d= 2$ \cite{CW}\footnote{
After the completion of this manuscript,
we learned that this result in \cite{CW} was recently improved
to $K_{4, N} = N^{\frac{131}{832}+}$
by Demirbas \cite{Demirbas}.
While the proof in \cite{CW} is based on Jarn\'ik's argument \cite{Jarnik},
the proof in \cite{Demirbas} is based on Huxley's counting estimate \cite{Huxley}.
More recently, this result in \cite{Demirbas} was improved 
to $K_{4, N} = N^{\frac{1}{8}+}$
by Demeter \cite{Demeter}.
See the footnote in Theorem \ref{THM:2} (ii).}
\textup{:} $K_{4, N} = N^\frac{1}{6}$,
\item[(ii)] $d=3$ \cite{Bo4}\textup{:}  $K_{4, N} = N^{\frac{1}{3}+\eps}$,
\item[(iii)] $d\ge3$ \cite{CW}\textup{:}
  $K_{4, N} = N^{\frac{d}{4} - \frac d{2(d+1)} +\eps}$ when $d$ is odd,
  and
  $K_{4, N} =  N^{\frac{d}{4} - \frac 1{2} +\eps}$ when $d$ is even,
\item[(iv)] $d \geq 2$ \cite{Bo5}\textup{:} $K_{p, N} = N^\eps$ for $p = \frac{2(d+1)}{d}$,
\end{itemize}

\noi
for any small $\eps >0$.
\end{theorem}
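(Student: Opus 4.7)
The plan is to handle the four parts through a single framework: reduce the Strichartz inequality to a lattice-point counting problem associated with the quadratic form $Q$ in \eqref{eq:Q}, then feed in the sharpest available counting bound for each case. For the $L^4$ parts (i)--(iii), localize to $I=[0,1]$, write $u(\bx,t)=e^{-it\Dl}f(\bx)$, and apply Plancherel to $|u|^2$. Expanding the square and applying Cauchy--Schwarz gives
\[
\|u\|_{L^4_{t,\bx}(I\times\T^d)}^4 \les \sup_{\bm\in\Z^d,\,\tau\in\R} r_{Q,N}(\bm,\tau)\cdot \|f\|_{L^2(\T^d)}^4,
\]
where $r_{Q,N}(\bm,\tau)$ counts pairs $(\bn_1,\bn_2)\in ([-N,N]^d\cap\Z^d)^2$ with $\bn_1-\bn_2=\bm$ and $|Q(\bn_1)-Q(\bn_2)-\tau|\les 1$. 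Geometrically, $r_{Q,N}(\bm,\tau)$ counts integer points in a thin neighborhood of the intersection of an ellipsoidal level set $\{Q(\bn)=\text{const}\}$ with an affine hyperplane.

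Each of (i)--(iii) then differs only in the counting input. For (i) ($d=2$), apply Jarn\'ik's bound for integer points on a smooth strictly convex planar curve to a level set of $Q$; this furnishes the exponent $K_{4,N}=N^{1/6}$. For (ii) ($d=3$), invoke Bourgain's variant of the Hardy--Littlewood circle method adapted to positive-definite ternary quadratic forms with rationally independent coefficients \cite{Bo4}, which yields $N^{1/3+\eps}$. For (iii), slice in two coordinates, apply a two-dimensional count on each slice, and sum over the remaining $d-2$ directions; the dichotomy in the stated exponent reflects an extra $N^{1/(d+1)}$ saving available only for odd $d$, coming from an additional averaged count along the last direction.

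Part (iv), the scale-critical exponent $p=\tfrac{2(d+1)}{d}$, is proved by a completely different mechanism, following \cite{Bo5}. The plan there is to apply the multilinear restriction theorem of Bennett--Carbery--Tao \cite{BCT} to the lifted surface $\{(\bn,Q(\bn)) : \bn\in[-N,N]^d\cap\Z^d\}$, then run the Bourgain--Guth induction-on-scales scheme \cite{BG} to pass from a $k$-linear estimate over transverse caps to the full linear bound. The crucial observation is that uniform nondegeneracy of the Hessian of $Q$ under the assumption $C^{-1}\leq\theta_j\leq C$ makes the transversality constants in the multilinear step insensitive to the arithmetic nature of $\theta_j$, so the $N^\eps$ loss propagates through to any irrational torus in the class considered.

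The main obstacle throughout (i)--(iii) is the unavailability of the divisor bound $d(n)=O(n^\eps)$ and of the finer singular-series analysis that drive the $L^4$ theory on the standard torus $\T^d$ in dimensions two and three; on irrational tori these are replaced by strictly coarser geometric counts, and this is precisely why the exponents do not collapse to $N^\eps$ in low dimensions. In part (iv) the difficulty is instead purely analytic, namely verifying uniformity of constants through the Bourgain--Guth iteration; there is no fundamental arithmetic bottleneck at the scale-critical exponent, which explains why the $N^\eps$ bound extends from the rational to the irrational setting in that range.
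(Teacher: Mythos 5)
The first thing to note is that the paper does not prove Theorem \ref{THM:1} at all: it is explicitly a summary of results quoted from \cite{CW}, \cite{Bo4}, and \cite{Bo5}, so there is no internal proof to compare against and the only question is whether your reductions are sound. The unified framework you set up for (i)--(iii) contains a genuine error at the reduction step. You expand $|u|^2$ in \emph{difference} frequencies, so that $r_{Q,N}(\bm,\tau)$ counts pairs with $\bn_1-\bn_2=\bm$ and $|Q(\bn_1)-Q(\bn_2)-\tau|\les 1$. With $\bm$ fixed, $Q(\bn_1)-Q(\bn_1-\bm)$ is \emph{linear} in $\bn_1$, so this is a slab count, not ``an ellipsoidal level set intersected with a hyperplane''; worse, at $\bm=0$, $\tau=0$ every diagonal pair $(\bn,\bn)$ is counted, so $\sup_{\bm,\tau}r_{Q,N}(\bm,\tau)\sim N^{d}$ and your inequality, while true, only recovers the trivial bound $K_{4,N}\les N^{d/4}$. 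The correct reduction (used in \cite{CW}, \cite{Bo4}, and reproduced in Lemma \ref{LEM:HY} and \eqref{A3} of this paper) expands $u^2$ in \emph{sum} frequencies: the relevant quantity is $\sup_{\ba,k}\#\{\bn\in[-N,N]^d: |Q(\bn)+Q(\ba-\bn)-k|\le \tfrac12\}$, and since $Q(\bn)+Q(\ba-\bn)=\tfrac12\big(Q(2\bn-\ba)+Q(\ba)\big)$ this is the number of lattice points in a thin \emph{ellipsoidal annulus} $|Q(2\bn-\ba)-\ld|\les 1$, with no hyperplane present and no degenerate diagonal term. Only with this count do Jarn\'ik's theorem (for $d=2$, $\les N^{2/3}$ points near a convex curve of diameter $N$, hence $K_{4,N}=N^{1/6}$) and the circle-method inputs produce the stated exponents.

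Two further points. In (iii) your parity dichotomy is backwards: since $\tfrac{d}{2(d+1)}<\tfrac12$, the odd-$d$ exponent $\tfrac d4-\tfrac{d}{2(d+1)}$ is the \emph{worse} of the two, so there is no ``extra saving available only for odd $d$''; it is the even case that attains the better bound $\tfrac d4-\tfrac12$, while the odd case incurs a loss (in \cite{CW} this traces back to the behavior of the one-dimensional Weyl-sum moments $\int_I\big|\sum_{0\le n\le N}e^{2\pi i n^2 t}\big|^r\,dt$ near the critical exponent $r=4$, cf.\ \eqref{A5}--\eqref{A6}), so your proposed slicing argument does not match the statement it is meant to prove. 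Parts (ii) and (iv) are fair attributions of the methods of \cite{Bo4} and of \cite{Bo5} (multilinear restriction plus Bourgain--Guth, with transversality uniform in $\pmb{\theta}$), but they are citations rather than arguments; that is acceptable for a survey statement, whereas the load-bearing counting reduction you do spell out would, as written, fail to yield any of the exponents in (i)--(iii).
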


\noi Note that the implicit constants in \eqref{P0}
 depend on  $C$ in \eqref{eq:Q} and the length of the local-in-time interval $I$.
The same comment applies to all the estimates in the remaining of the paper
and we do not mention this dependence explicitly in the following.
In \cite{Bo4}, Bourgain also proved
\begin{equation}
\| e^{-it \Delta} f\|_{L^p_{t}L^4_\bx(I\times \T^3)} \lesssim N^{\frac{3}{4} - \frac{2}p}\|f\|_{L^2(\T^3)}
\label{P0a}
\end{equation}
for $p>\frac {16}3$.

In this paper, we partially improve the known results in Theorem
\ref{THM:1}, and obtain some critical Strichartz estimates when $p$
is large. We state our main result on the Strichartz estimates on
irrational tori.

\begin{theorem}\label{THM:2}
\textup{(i)} The following scaling-invariant Strichartz estimate holds on an irrational torus:
\begin{align}
\| e^{-it \Delta} f\|_{L^p_{t,\bx}(I \times \T^d)} \lesssim  N^{\frac{d}{2} - \frac{d+2}p}\|f\|_{L^2(\T^d)},
\label{P1}
\end{align}

\noi
for all $f \in L^2(\T^d)$ with $ \supp \ft f \subset [-N,N]^d$,
provided that  $d$ and $p$ are in the following ranges:
\begin{itemize}
\item[(i.a)] $d=2$\textup{:} $p>\frac{20}{3}$,
\item[(i.b)]$d=3$ 
\textup{:} $p>\frac {16}3$,
\item[(i.c)] $d=4$\textup{:} $p>4$,
\item[(i.d)] $d\ge 5$\textup{:} $p\ge 4$.
\end{itemize}

\noi
\textup{(ii)}
Let $\eps > 0$.
Then, the Strichartz estimate with an $\eps$-loss of regularity holds on an irrational torus:\footnote{In a very recent preprint, Demeter \cite{Demeter} proved the
Strichartz estimates  \eqref{P2} with an $\eps$-loss on the standard torus $\T^d$  for $p \geq \frac{2(d+3)}{d}$.
His argument is based on  incidence geometry,  without  any number theory.
As a result,  the same result holds for irrational tori
and hence improves our result in Theorem \ref{THM:2} (ii) in a significant manner.
This also improves the values of $s_0$ in 
some {\it subcritical} local well-posedness results below (Theorem \ref{THM:3} (i.a), (i.b), (ii.a),  and (ii.b)).
Note that the result in \cite{Demeter} comes with an $\eps$-loss and thus it does not improve
the scaling-invariant Strichartz estimate \eqref{P1} in Theorem \ref{THM:2} (i)
and {\it critical} local well-posedness results in Theorems \ref{THM:4} and \ref{THM:5}.}
\begin{align}
\| e^{-it \Delta} f\|_{L^p_{t,\bx}(I \times \T^d )} \lesssim  N^{\frac{d}{2} - \frac{d+2}p+\eps}\|f\|_{L^2(\T^d)},
\label{P2}
\end{align}

\noi
for all $f \in L^2(\T^d)$ with $ \supp \ft f \subset [-N,N]^d$,
provided that  $d$ and $p$ are in the following ranges:
\begin{itemize}
\item[(ii.a)] $d=2$\textup{:} $p\geq \frac{20}{3}$,
\item[(ii.b)]$d=3$\textup{:} $p= \frac {16}3$,
\item[(ii.c)] $d=4$\textup{:} $p= 4$.
\end{itemize}

\end{theorem}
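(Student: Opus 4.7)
The plan is to establish Theorem \ref{THM:2} by reducing the scaling-invariant $L^p_{t,\bx}$ estimates of part (i) to scaling-invariant mixed-norm estimates of the form
\begin{equation*}
\| e^{-it\Dl} f\|_{L^p_t L^4_\bx(I\times \T^d)} \les N^{\frac{d}{4} - \frac{2}{p}}\|f\|_{L^2(\T^d)},
\end{equation*}
from which the target bound \eqref{P1} follows by Bernstein's inequality $\|g\|_{L^p_\bx} \les N^{d(\frac14 - \frac1p)}\|g\|_{L^4_\bx}$, valid for frequency-localized $g$ on $\T^d$ and $p\geq 4$: indeed the two exponents add to exactly $\frac{d}{2} - \frac{d+2}{p}$. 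This reduction is optimal with respect to the scaling and circumvents the failure of sharp $L^p_{t,\bx}$ endpoint estimates on irrational tori.

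For case (i.b) with $d=3$, $p > \frac{16}{3}$, the mixed-norm input is precisely Bourgain's estimate \eqref{P0a}, so the argument is immediate. For cases (i.c) and (i.d) with $d\geq 4$, I would establish the analogous scaling-invariant mixed-norm estimate on $\T^d$ by interpolating Bourgain's endpoint $L^{p_0}_{t,\bx}$ bound from Theorem \ref{THM:1}(iv) at $p_0 = \frac{2(d+1)}{d}$ against the elementary $L^\infty_t L^4_\bx$ bound $\les N^{d/4}\|f\|_{L^2}$ (obtained via Bernstein together with $L^2$ mass conservation) and the $L^\infty_t L^2_\bx$ identity, using the extra room in the strict inequality $p > 4$ (respectively $p \geq 4$ for $d\geq 5$, where the geometry provides more margin) to absorb the $N^\eps$-loss carried by Theorem \ref{THM:1}(iv). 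Case (i.a) for $d=2$ is the most delicate: one must combine Bourgain's $L^3_{t,\bx}$ bound (Theorem \ref{THM:1}(iv) at $p_0 = 3$) with a refined two-dimensional mixed-norm analysis, or perform an $\eps$-removal argument following Bourgain-Guth, to reach the threshold $p > \frac{20}{3}$.

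The endpoint estimates of part (ii) at $p\in\{\tfrac{20}{3}, \tfrac{16}{3}, 4\}$ are obtained by interpolating Theorem \ref{THM:1}(iv) against either the trivial $L^\infty_{t,\bx}$ Bernstein bound $N^{d/2}$ or the scaling-invariant mixed-norm estimates produced in the proof of part (i); the $N^\eps$-loss carried by Theorem \ref{THM:1}(iv) necessarily persists through the interpolation. Case (ii.c) is also contained in Theorem \ref{THM:1}(iii).

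The main obstacle is establishing the scaling-invariant mixed-norm estimate $L^p_t L^4_\bx$ on irrational $\T^d$ for $d\geq 4$, since the standard number-theoretic tools (divisor bounds, the Hardy-Littlewood circle method) are unavailable on irrational tori, where the quadratic form $Q(\bn)$ governing the dispersion lacks arithmetic regularity on $\Z^d$. The proof must rely on purely Fourier-analytic machinery, for instance a Tomas-Stein $TT^*$ argument combined with multilinear restriction theory as in Bennett-Carbery-Tao, or a Bourgain-Demeter-style decoupling approach, which are robust to arithmetic perturbations of the frequency set.
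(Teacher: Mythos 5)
Your reduction of \eqref{P1} to a scaling-invariant mixed-norm bound $\| e^{-it\Dl} f\|_{L^p_t L^4_\bx} \les N^{\frac d4 - \frac 2p}\|f\|_{L^2}$ followed by Bernstein is exactly the paper's strategy for $d\ge 3$, and for $d=3$ the deduction of (i.b) from \eqref{P0a} is indeed immediate. The problem is that for $d\ge 4$ you have no valid proof of the mixed-norm input. Interpolating Theorem \ref{THM:1} (iv) at $p_0=\frac{2(d+1)}{d}$ with the Bernstein bounds cannot reach the sharp exponent: writing $\frac1p=\frac{\theta}{p_0}$ and interpolating the $N^\eps$ bound at $p_0$ against $\|e^{-it\Dl}f\|_{L^\infty_{t,\bx}}\les N^{d/2}\|f\|_{L^2}$ gives $N^{\frac d2(1-\theta)+\eps\theta}=N^{\frac d2-\frac{d+1}{p}+\eps\theta}$, which misses the target $N^{\frac d2-\frac{d+2}{p}}$ by a full factor $N^{1/p}$ even at $\eps=0$. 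This deficit has nothing to do with "absorbing the $N^\eps$-loss" via room in $p$ (an $\eps$-loss in $N$ cannot be traded against slack in the Lebesgue exponent anyway), and at the endpoint $p=4$, $d\ge5$ there is no room at all; the same computation is why the paper's Remark \ref{REM:level} notes that combining \eqref{C0b} with level set estimates only reaches $p\ge\frac{2(3d+1)}{d}$. Your fallback of multilinear restriction or decoupling is also structurally incapable of proving part (i): as the paper's footnote on Demeter's work emphasizes, those methods carry an intrinsic $N^\eps$ loss and therefore cannot yield the scaling-invariant estimate \eqref{P1}.

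The paper's actual proof of the mixed-norm bound (Proposition \ref{PROP:Str d3}) is a counting argument: a Hausdorff--Young-type lemma reduces $\|F_\ba\|_{L^{p/2}_t}$ to controlling $\sum_\l |A_\l|^{\frac{p}{p-4}}$ for the level sets $A_\l=\{|Q(\bn)-\l|\le1\}\cap[-N,N]^d$, and this quantity is bounded by $\int_I\prod_j|\sum_{|n_j|\le N}e^{2\pi i\theta_j n_j^2 t}|^{p/4}dt$, which after H\"older factors into one-dimensional moments $\int_I|\sum_{0\le n\le N}e^{2\pi i n^2t}|^{dp/4}dt\sim N^{dp/4-2}$ for $dp/4>4$ (circle method; Hua's inequality with a log loss at $dp/4=4$, giving part (ii.b)--(ii.c)). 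Your premise that the Hardy--Littlewood circle method is "unavailable" on irrational tori is therefore mistaken: the product structure of $Q(\bn)=\sum\theta_j n_j^2$ lets one apply it to each \emph{one-dimensional, rational} Weyl sum separately, with the $\theta_j$ entering only through a harmless time dilation. This is the missing idea, and it is also where the thresholds $p>\frac{16}{d}$ come from. Finally, for $d=2$ your proposal is only a gesture; the paper's proof of (i.a) and (ii.a) occupies all of Section \ref{SEC:level} and rests on level set estimates obtained from a Farey dissection with Weyl's inequality and Ramanujan/divisor sums, combined with the $L^4$ bound \eqref{C0a} — neither an interpolation nor an $\eps$-removal in the Bourgain--Guth sense. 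Of your part (ii) claims, only (ii.c) survives as stated, since it coincides with Theorem \ref{THM:1} (iii) at $d=4$.
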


\noi
When $d\geq 3$,
we follow a relatively simple argument  after Bourgain \cite{Bo4}
and prove Theorem \ref{THM:2} in Subsection \ref{SUBSEC:d3}.
When $d = 2$, this argument proves \eqref{P2} only for $p \geq 8$.
In Subsection \ref{SUBSEC:d2},
we present a duality argument to prove \eqref{P1} (when $d = 2$) for $p > 12$.
In Section \ref{SEC:level},
we establish  certain level set estimates
and provide a full proof of  Theorem \ref{THM:2} when $d = 2$.


By interpolating with Theorems \ref{THM:1} and \ref{THM:2},
we can obtain Strichartz estimates for other values of $p$.
The following corollary shows a summary of  known Strichartz estimates on irrational tori
at this point.

\begin{corollary}\label{COR:1}
Let $\eps > 0$.
Then, the following Strichartz estimates hold
for all
 $f \in L^2(\T^d)$ with $\supp \ft f \subset [-N,N]^d$:

\medskip

\noi
\textup{(i)} $d = 2$\textup{:}
\begin{align}\label{co-d2}
\| e^{-it \Delta} f\|_{L^p_{t,\bx}(I\times\T^2)}
\lesssim
\begin{cases}
\vphantom{\Big|}
N^{\eps}\|f\|_{L^2(\T^2)}, & \text{for } 2 < p \leq 3, \\
\vphantom{\Big|} N^{\frac 23 - \frac 2p+\eps}\|f\|_{L^2(\T^2)}, & \text{for } 3 < p < 4, \\
\vphantom{\Big|} N^{\frac16}\|f\|_{L^2(\T^2)}, & \text{for } p = 4, \\
\vphantom{\Big|}N^{\frac34 - \frac 7{3p}+\eps}\|f\|_{L^2(\T^2)},&  \text{for } 4<p\leq\frac{20}{3}, \\
\vphantom{\Big|} N^{1 - \frac{4}p }\|f\|_{L^2(\T^2)},& \text{for } p> \frac{20}{3}.
\end{cases}
\end{align}


\noi
\textup{(ii)} $d = 3$\textup{:}
\begin{align}\label{co-d3}
\| e^{-it \Delta} f\|_{L^p_{t,\bx}(I\times\T^3)}
\lesssim
\begin{cases}
\vphantom{\Big|}
N^{\eps}\|f\|_{L^2(\T^3)}, & \text{for } 2 < p \leq \frac 83, \\
\vphantom{\Big|}
N^{1 - \frac{8}{3p}+\eps}\|f\|_{L^2(\T^3)}, & \text{for } \frac 83 < p \leq 4, \\
\vphantom{\Big|}
N^{\frac54 - \frac{11}{3p}+\eps}\|f\|_{L^2(\T^3)},&  \text{for } 4< p\leq \frac{16}{3}, \\
\vphantom{\Big|}
N^{\frac 32  - \frac{5}p }\|f\|_{L^2(\T^3)},& \text{for } p> \frac{16}{3}.
\end{cases}
\end{align}

\noi
\textup{(iii)} $d = 4$\textup{:}
\begin{align}\label{co-d4}
\| e^{-it \Delta} f\|_{L^p_{t,\bx}(I\times\T^4)}
\lesssim
\begin{cases}
\vphantom{\Big|}
N^{\eps}\|f\|_{L^2(\T^4)}, & \text{for } 2 < p \leq \frac 52, \\
\vphantom{\Big|}
N^{\frac 43 - \frac{10}{3p}+\eps}\|f\|_{L^2(\T^4)}, & \text{for } \frac 52 < p \leq 4, \\
\vphantom{\Big|}
N^{ 2  - \frac{6}p }\|f\|_{L^2(\T^4)},& \text{for } p> 4.
\end{cases}
\end{align}

\noi
\textup{(iv)} $d \geq 5$\textup{:}
\begin{align}\label{co-d5}
\| e^{-it \Delta} f\|_{L^p_{t,\bx}(I\times\T^d)}
\lesssim
\begin{cases}
\vphantom{\Big|}
N^{\eps}\|f\|_{L^2(\T^d)}, & \text{for } 2 < p \leq \frac{2(d+1)}{d} , \\
\vphantom{\Big|}
N^{(\frac d4 - \frac 12)
(\frac{2d}{d-1} - \frac{4(d+1)}{p(d-1)})+\eps}\|f\|_{L^2(\T^d)}, & \text{for } \frac{2(d+1)}{d}  < p < 4, \\
\vphantom{\Big|}
N^{ \frac{d}{2}  - \frac{d+2}p }\|f\|_{L^2(\T^d)},& \text{for } p\geq 4.
\end{cases}
\end{align}

\end{corollary}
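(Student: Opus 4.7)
The plan is to derive each piecewise bound in Corollary \ref{COR:1} by Riesz--Thorin interpolation between a small collection of endpoint Strichartz estimates already supplied by Theorems \ref{THM:1} and \ref{THM:2}, combined with the trivial estimate
\[
\|e^{-it\Dl} f\|_{L^2_{t,\bx}(I\times \T^d)} \les \|f\|_{L^2(\T^d)}
\]
which follows from unitarity of $e^{-it\Dl}$ on $L^2(\T^d)$ and the compactness of $I$. All the relevant estimates have the form $\|e^{-it\Dl} f\|_{L^p_{t,\bx}} \les N^{s(p)} \|f\|_{L^2}$, and Riesz--Thorin yields the analogous bound at any intermediate exponent $p$ with $s(p)$ affine in $1/p$. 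Thus the corollary's formulae are precisely the piecewise-linear-in-$1/p$ envelope of the chosen endpoints.

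For each dimension I would use the following endpoints $(p_0, s(p_0))$. For $d = 2$: $(2, 0)$, $(3, \eps)$ from Theorem \ref{THM:1}(iv), $(4, \tfrac{1}{6})$ from Theorem \ref{THM:1}(i), $(\tfrac{20}{3}, \tfrac{2}{5} + \eps)$ from Theorem \ref{THM:2}(ii.a), and the scaling-invariant bound for $p > \tfrac{20}{3}$ from Theorem \ref{THM:2}(i.a). For $d = 3$: $(2, 0)$, $(\tfrac{8}{3}, \eps)$ from Theorem \ref{THM:1}(iv), $(4, \tfrac{1}{3} + \eps)$ from Theorem \ref{THM:1}(ii), $(\tfrac{16}{3}, \tfrac{9}{16} + \eps)$ from Theorem \ref{THM:2}(ii.b), and the scaling-invariant bound for $p > \tfrac{16}{3}$. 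For $d = 4$: $(2, 0)$, $(\tfrac{5}{2}, \eps)$ from Theorem \ref{THM:1}(iv), $(4, \tfrac{1}{2} + \eps)$ from Theorem \ref{THM:2}(ii.c), and the scaling-invariant bound for $p > 4$. For $d \geq 5$: $(2, 0)$, $(\tfrac{2(d+1)}{d}, \eps)$ from Theorem \ref{THM:1}(iv), and the scaling-invariant bound of Theorem \ref{THM:2}(i.d), which already starts at $p = 4$; for $\tfrac{2(d+1)}{d} < p < 4$ I would interpolate between $(\tfrac{2(d+1)}{d}, \eps)$ and $(4, \tfrac{d-2}{4})$.

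Given $p$ between consecutive endpoints $p_0 < p_1$, solving $\theta/p_0 + (1-\theta)/p_1 = 1/p$ for $\theta \in [0,1]$ and setting $s(p) = \theta s(p_0) + (1-\theta) s(p_1)$ yields, after simplification, the piecewise expressions in the statement line by line. Two minor bookkeeping remarks are worth highlighting: any endpoint taken from Theorem \ref{THM:1}(iv) or Theorem \ref{THM:2}(ii) carries an $N^\eps$ factor, so the interpolated bound inherits an $\eps$-loss whenever either endpoint does, which accounts for the $+\eps$ in all but the top range of each dimension; and in the $d \geq 5$ case at $p = 4$, the scaling-invariant exponent $\tfrac{d-2}{4}$ from Theorem \ref{THM:2}(i.d) is strictly smaller than both $\tfrac{d}{4} - \tfrac{d}{2(d+1)}$ (odd $d$) and $\tfrac{d}{4} - \tfrac{1}{2}$ (even $d$) supplied by Theorem \ref{THM:1}(iii), so the former is the correct endpoint to interpolate against and the latter is superseded. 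I do not anticipate any real obstacle; the entire argument is a routine exponent computation once the endpoints have been fixed, with the only subtlety being to verify that the chosen endpoint pair genuinely produces the minimal interpolated exponent in each sub-range.
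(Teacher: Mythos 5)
Your proposal is correct and is exactly the paper's argument: the authors justify Corollary \ref{COR:1} with the single sentence ``By interpolating with Theorems \ref{THM:1} and \ref{THM:2}, we can obtain Strichartz estimates for other values of $p$,'' and your choice of endpoints and the resulting affine-in-$1/p$ exponents all check out (including the $d\geq 5$ intermediate range, where your formula $\frac{d(d-2)}{2(d-1)}-\frac{(d-2)(d+1)}{p(d-1)}$ agrees with the stated $(\frac d4-\frac12)(\frac{2d}{d-1}-\frac{4(d+1)}{p(d-1)})$).
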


\subsection{Local well-posedness results of NLS on irrational tori}
\label{SUBSEC:1.3}

In the following,  we apply these Strichartz estimates in Corollary \ref{COR:1} to the
Cauchy problem of NLS on an irrational torus:
\begin{equation}
\begin{cases}\label{eq:nls1}
i \partial_t u - \Delta u =\pm |u|^{2k}u \\
u\big|_{t = 0} = u_0 \in H^s(\T^d),
\end{cases}
\quad (\bx, t) \in \T^d\times  \R,
\end{equation}
where $k\in \mathbb{N}$ is a positive integer
and  the Laplacian $\Dl$ is defined by \eqref{eq:Q0}.
%
%
%
%
%
%
%
%
%
%
First, recall the following notion.
When $M = \R^d$, the Cauchy problem \eqref{NLS0} enjoys the dilation symmetry.
Namely, if $u$ is a solution to \eqref{NLS0} with respect to
an initial condition  $u_0$,
then the rescaled function $u_\ld(\bx, t) := \ld^{\frac{2}{p-1}} u(\ld \bx, \ld^2 t)$
is also a solution to \eqref{NLS0} with the rescaled initial condition
$u_{0, \ld}(\bx) := \ld^{\frac{2}{p-1}} u_0 (\ld \bx)$.
We say that
the Sobolev index $s_c$ is critical
if
the homogeneous Sobolev norm  $\|\cdot\|_{\dot H^{s_c}(\R^d)}$
is invariant under this dilation symmetry.
In particular, the critical Sobolev index is given by $s_c = \frac{d}{2} - \frac{2}{p-1}$.
When $M\ne \R^d$,
we may not have this natural dilation symmetry.
Nonetheless,
the notion of the critical Sobolev index
provides us important heuristics.
In terms of the Cauchy problem \eqref{eq:nls1},
the
critical Sobolev index $s_c$
is given by
\begin{equation} \label{Introsc}
s_c = \frac d2 - \frac 1k.
\end{equation}

First, we state local well-posedness in
subcritical Sobolev spaces $H^s(\T^d)$ with $s > s_c$.

\begin{theorem}[Local well-posedness in subcritical spaces]\label{THM:3}
Let $d \geq 2$ and $k \in \mathbb{N}$.
Then, there exists $s_0 = s_0(k,d)$ such that the Cauchy problem \eqref{eq:nls1}
on a $d$-dimensional irrational torus $\T^d$ is locally well-posed in $H^{s}(\T^d)$
for $s > s_0$
in the following cases:


\smallskip
\noi
\textup{(i)}
$d=2$\textup{:}
\begin{itemize}
\item[(i.a)]  $k = 1$, $s_0= \frac{1}{3}$ \cite{CW},
\item[(i.b)] $k = 2, 3, 4, 5$, $s_0= \frac{7k - 3 }{7k+5}$,
\item[(i.c)] $k \geq 5$, $s_0= s_c = 1- \frac 1k$,
\end{itemize}

\noi
Note that the values of $s_0$ in \textup{(i.b)} and \textup{(i.c)} coincide when $k = 5$.

\smallskip
\noi
\textup{(ii)}
$d=3$\textup{:}
\begin{itemize}
\item[(ii.a)] $k = 1$, $s_0= \frac{2}{3}$ \cite{Bo4},
\item[(ii.b)] $k = 2$,  $s_0= \frac{53}{52}$,
\item[(ii.c)] $k \geq 3$, $s_0=s_c = \frac 32 - \frac 1k$,
\end{itemize}

\noi
\textup{(iii)}
$d\geq 4$\textup{:}
 $k\ge1$, $s_0=s_c = \frac d2 - \frac 1k$.
\end{theorem}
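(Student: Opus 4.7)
The plan is to run Bourgain's $X^{s,b}$ fixed-point scheme, using Corollary \ref{COR:1} as the sole new analytic input. Define the space $X^{s,b}(\T^d \times \R)$ by the norm
\[
\|u\|_{X^{s,b}}^2 = \sum_{\bn \in \Z^d}\int_\R \jb{\bn}^{2s}\jb{\tau + Q(\bn)}^{2b} |\wt u(\bn,\tau)|^2\,d\tau,
\]
where $\wt u$ denotes the space-time Fourier transform, and fix $b = \tfrac12 + \eps$. Denoting by $X^{s,b}_T$ its time-restriction to $[-T,T]$, the standard linear and Duhamel estimates in $X^{s,b}_T$ reduce local well-posedness to a multilinear estimate of the form
\[
\big\| |u|^{2k} u \big\|_{X^{s,b-1}_T}
\les T^\theta \|u\|_{X^{s,b}_T}^{2k+1} \quad\text{for some }\theta > 0,
\]
which then closes a contraction on a small ball in $X^{s,b}_T$.

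To prove this multilinear estimate, I would first invoke the transfer principle to convert each Strichartz bound from Corollary \ref{COR:1} into an embedding $P_N X^{0,b} \hookrightarrow L^p_{t,\bx}(\R \times \T^d)$ with the same constant $K_{p,N}$. By duality against $X^{-s,1-b}_T$ and Littlewood-Paley decomposition, the multilinear estimate reduces to controlling
\[
\bigg|\int_{[-T,T] \times \T^d} \prod_{j=1}^{2k+2} P_{N_j}u_j \, d\bx\, dt\bigg|
\]
for dyadic scales $N_1 \ge \cdots \ge N_{2k+2}$. I would then apply H\"older's inequality to distribute these $2k+2$ factors among Strichartz-admissible exponents, using Bernstein's inequality on the lower-frequency pieces to trade integrability for regularity and extract a gain that can be summed over dyadic scales.

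The threshold $s_0$ in each case is dictated by which endpoint of Corollary \ref{COR:1} is invoked. In cases (i.c), (ii.c), and (iii), where $s_0 = s_c$, the scaling-invariant estimate \eqref{P1} of Theorem \ref{THM:2}(i) provides exactly the scale-critical input, so one may place all factors in the common $L^p$ with $p\geq 4$ (resp.\ $p>\tfrac{16}{3}$, $p>\tfrac{20}{3}$). In the intermediate ranges (i.b) and (ii.b), the numbers $s_0 = (7k-3)/(7k+5)$ and $s_0 = 53/52$ emerge by balancing the $\eps$-loss Strichartz exponent at $p=\tfrac{20}{3}$ on $\T^2$ (resp.\ the mixed-norm exponent $p=\tfrac{16}{3}$ on $\T^3$ from \eqref{P0a}) against the $2k+1$ copies in the H\"older pairing and the contribution of the dual $L^p$-factor. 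Cases (i.a) and (ii.a) reproduce the earlier results of \cite{CW, Bo4} from the $L^4$-Strichartz bounds of Theorem \ref{THM:1}.

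The main obstacle will be the dyadic summation in frequency, particularly in the unbalanced regime $N_1 \gg N_2 \sim \cdots \sim N_{2k+2}$: the bare H\"older-plus-Strichartz bound does not close, and one must exploit either the $b-1<0$ slack in the modulation weight $\jb{\tau + Q(\bn)}$ or Bernstein on the low-frequency factors to recover a geometric gain in $N_1$ that is absorbed by the regularity weight $N_1^s$ with $s>s_0$. Once the multilinear estimate is in place, the factor $T^\theta$ produced from H\"older in time yields a contraction on a small ball in $X^{s,b}_T$, and standard arguments give uniqueness and continuous dependence on the initial data, completing the proof.
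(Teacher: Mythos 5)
Your overall framework ($X^{s,b}$ contraction, duality, Littlewood--Paley, H\"older against the Strichartz exponents of Corollary \ref{COR:1}) matches the paper's, which runs everything through the $(k+1)$-linear $L^2_{t,\bx}$ estimate \eqref{multi-est} of Lemma \ref{LEM:LWP1} (your $(2k+2)$-fold dual pairing reduces to two copies of that estimate by Cauchy--Schwarz). The exponent bookkeeping you sketch for (i.b) and (ii.b) is also essentially what the paper does: two factors are placed at an exponent in the interpolated range of Corollary \ref{COR:1} and the remaining $k-1$ factors at an exponent above the scaling-invariant threshold, the exponents being tuned so that every factor loses exactly $N_j^{s_0}$.

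However, there is a genuine gap exactly at the point you flag as ``the main obstacle,'' namely the unbalanced regime $N_1 \gg N_2$. Neither of your proposed remedies works. Bernstein on the low-frequency factors produces \emph{positive} powers of the low frequencies $N_j$, not a negative power of $N_1$, so it cannot supply a geometric gain in $N_1$; and the modulation slack $b-1<0$ gives no usable gain here, since with the dual factor at frequency $N_0\sim N_1$ the resonance function need not be large and on an irrational torus it admits no clean lower bound anyway. What is actually needed --- and what the paper uses --- is that the multilinear estimate \eqref{multi-est} requires the highest-frequency factor to be estimated with a Strichartz constant depending on $N_2$ rather than $N_1$ (so that the weight $N_{\max}^{-s}N_1^{s}=1$ cancels exactly and the remaining sum converges from $s>s_0$ applied to the lower frequencies). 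This is achieved by decomposing $\phi_1=\sum_{|\bj|\les N_1/N_2}\phi_{1\bj}$ into pieces supported in frequency cubes of side $N_2$, modulating each piece to the origin as in \eqref{F3} (the Strichartz constant depends only on the size of the frequency support, not its location), and recombining by almost orthogonality of the products in $L^2_{t,\bx}$. Without this cube decomposition the naive H\"older-plus-Strichartz bound yields $N_1^{s_0+}$ from the top factor and the dyadic sum over $N_1$ does not close for any $s$ close to $s_0$; with it, the proof is complete. You should replace your paragraph on the unbalanced regime with this argument.
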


\noi
After Bourgain's seminal paper \cite{Bo2},
the Fourier restriction norm method,  involving the $X^{s, b}$-space,  has been applied
to study well-posedness of a wide class of equations.
In our proof, we also employ the $X^{s, b}$-spaces
and by the standard argument,
the proof is reduced to establishing certain multilinear Strichartz estimates.

\medskip

Furthermore, by applying the well-posedness theory involving
the $U^p$- and $V^p$-spaces developed by Tataru,
Koch,
and their collaborators \cite{KochT, HHK, HTT11, HTT2},
we prove some critical local well-posedness.\footnote{In a very recent paper, 
Strunk \cite{Strunk}
extended Theorem \ref{THM:4} to (i) $k \geq 3$ when $d = 2$ and (ii) $k = 2$ when $d = 3$.
The main idea in \cite{Strunk} is based on  considering Strichartz estimates in 
mixed Lebesgue spaces $L^q_tL^r_\bx$ to 
improve the {\it multilinear} Strichartz estimate (Proposition \ref{PROP:LWP2} below).
This clever argument avoids the need of 
 improving the scaling-invariant Strichartz estimate \eqref{P1}.}

\begin{theorem}[Local well-posedness in critical spaces]\label{THM:4}
Given $d \geq 2$ and $k \in \mathbb{N}$,
let
 $s_c$ be the critical Sobolev index given by  \eqref{Introsc}.
 Then,  the Cauchy problem \eqref{eq:nls1}
 on a $d$-dimensional irrational torus $\T^d$ is locally well-posed in the critical Sobolev space $H^{s_c}(\T^d)$ in the following cases:
\begin{itemize}
\item[(i)] $d=2$\textup{:}   $k\ge 6$,
\item[(ii)] $d=3$\textup{:}   $k\ge 3$,
\item[(iii)] $d\ge4$\textup{:} $k\ge 2$.
\end{itemize}
\end{theorem}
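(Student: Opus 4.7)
The plan is to implement the $U^p_\Delta$/$V^p_\Delta$ atomic space framework of Koch--Tataru \cite{KochT}, adapted to the periodic Schr\"odinger setting by Hadac--Herr--Koch \cite{HHK} and Herr--Tataru--Tzvetkov \cite{HTT11,HTT2}. I would work in the solution and test spaces
\begin{align*}
\|u\|_{X^s}^2=\sum_{N\ge 1 \text{ dyadic}} N^{2s}\|P_N u\|_{U^2_\Delta}^2,
\qquad
\|u\|_{Y^s}^2=\sum_{N\ge 1 \text{ dyadic}} N^{2s}\|P_N u\|_{V^2_\Delta}^2,
\end{align*}
where $P_N$ denotes the Littlewood--Paley projector onto frequencies $|\bn|\sim N$, and rely on the by-now standard ingredients: the energy estimate for the Duhamel term, the embedding chain $X^s\hookrightarrow Y^s\hookrightarrow L^\infty_t H^s_\bx$, the duality pairing between $U^2_\Delta$ and $V^2_\Delta$, and the transference principle promoting any linear Strichartz inequality into a $U^p_\Delta$-bound. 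Taken together, these reduce critical local well-posedness in $H^{s_c}$ to a single multilinear Strichartz estimate for the nonlinearity $|u|^{2k}u$, whence a contraction in a small ball of $X^{s_c}$ produces the solution.

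After a Littlewood--Paley decomposition and reordering the input frequencies $N_1\ge N_2\ge\cdots\ge N_{2k+1}$, the key multilinear estimate takes the form
\begin{align*}
\bigg\|P_{N_0}\bigg(\prod_{j=1}^{2k+1}\wt u_j\bigg)\bigg\|_{L^2_{t,\bx}(I\times\T^d)}
\les \bigg(\frac{N_{2k+1}}{N_1}\bigg)^{\delta} N_0^{s_c}\prod_{j=1}^{2k+1}N_j^{-s_c}\|P_{N_j}u_j\|_{Y^0}
\end{align*}
for some $\delta>0$ and with $N_0\les N_1$, where $\wt u_j$ is either $P_{N_j}u_j$ or its conjugate. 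I would derive this estimate directly from Theorem \ref{THM:2}(i) via the transference principle: H\"older's inequality in $L^2_{t,\bx}$ with equal exponent $p=2(2k+1)=4k+2$ on each factor, together with \eqref{P1}, produces $\prod N_j^{d/2-(d+2)/p}\|P_{N_j}u_j\|_{U^p_\Delta}$, which after recasting into $Y^0$-norms recovers the Sobolev weight $N_j^{s_c}$ with a residual power. A refined pairing---placing the two highest frequencies in $L^p_{t,\bx}$ and controlling the remaining factors by Bernstein's inequality together with the embedding $Y^0\hookrightarrow L^\infty_t L^2_\bx$---then extracts the decay factor $(N_{2k+1}/N_1)^{\delta}$ needed to sum the dyadic indices. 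A careful accounting shows that the threshold $p=4k+2$ must lie in the admissible range of \eqref{P1}, namely $p>20/3$ for $d=2$, $p>16/3$ for $d=3$, $p>4$ for $d=4$, and $p\ge 4$ for $d\ge 5$, which produces the quantitative hypotheses $k\ge 6$, $k\ge 3$, and $k\ge 2$ stated in the theorem.

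The main obstacle is that one is working \emph{exactly} at scaling, so no logarithmic or $\eps$-loss is tolerable; in particular, the lossy Strichartz estimate \eqref{P2} cannot be substituted for \eqref{P1} in the above chain, and the strict inequalities in the admissible range of \eqref{P1} are precisely what force the quantitative thresholds on $k$. This also explains why subsequent improvements---e.g.\ Strunk's use of mixed-norm Strichartz estimates noted in the footnote---lower those thresholds without requiring any improvement of \eqref{P1} itself. Once the multilinear estimate is in place, the remainder of the proof---Picard iteration in a small ball of $X^{s_c}$, uniqueness, continuous dependence on data, and the blow-up alternative---follows the standard scheme of \cite{HHK,HTT11,HTT2}.
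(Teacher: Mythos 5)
Your overall framework is the right one and matches the paper's: $U^2_\Delta/V^2_\Delta$-based spaces $X^{s_c}$, $Y^{s_c}$, the duality estimate for the Duhamel term, transference, and a reduction of the contraction argument to a frequency-localized multilinear Strichartz estimate with an off-diagonal gain $(N_{\mathrm{low}}/N_1)^\delta$. The gap is in how you propose to prove that multilinear estimate. Symmetric H\"older with each of the $2k+1$ factors in $L^{4k+2}_{t,\bx}$, combined with the scaling-sharp bound \eqref{P1}, produces a total frequency weight $(2k+1)\big(\tfrac d2-\tfrac{d+2}{4k+2}\big)=kd-1$, whereas the critical estimate requires at most $2k\,s_c=kd-2$ (no loss at all on the highest frequency $N_1$, which must enter at $L^2$-scale so that the dyadic sum over $N_0\sim N_1$ can be closed by Cauchy--Schwarz). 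So this route overshoots by a full power of $N$ in the diagonal case $N_1\sim\cdots\sim N_{2k+1}$; this is not an $\eps$-issue that a better constant fixes. Your fallback --- two highest factors in $L^p$ and the rest via Bernstein into $L^\infty_{t,\bx}$ --- makes matters worse, since Bernstein costs $N_j^{d/2}$ per low factor, exceeding the allowed $N_j^{s_c}$ by $N_j^{1/k}$ each. Relatedly, your derivation of the thresholds on $k$ from ``$4k+2$ admissible'' gives the wrong numerology (e.g.\ for $d=2$ it would give $k\ge2$, not $k\ge6$).

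What the paper actually does, and what is missing from your proposal, is twofold. First, the H\"older splitting is asymmetric: the two highest-frequency factors go in $L^p_{t,\bx}$ with $p$ just above the admissible threshold, the $k-2$ intermediate factors go in $L^{(d+2)k}_{t,\bx}$ (where \eqref{P1} gives exactly the weight $N_j^{s_c}$, since $\tfrac d2-\tfrac{d+2}{(d+2)k}=s_c$), and the lowest factor goes in $L^q$ with $\tfrac2p+\tfrac{k-2}{(d+2)k}+\tfrac1q=\tfrac12$; the window $\tfrac{20}{3}<p<\tfrac{8k}{k+1}$ (resp.\ $\tfrac{16}{3}<p<\tfrac{20k}{3k+2}$, $4<p<\tfrac{4k(d+2)}{dk+2}$) is exactly what forces $k\ge6$, $k\ge3$, $k\ge2$. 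Second --- and this is the essential idea --- the highest-frequency cube $C\in\mathcal{C}_{N_2}$ is partitioned into strips $R_\l$ of width $M=\max(N_2^2/N_1,1)$ orthogonal to its center with respect to $\cdot_{\pmb\theta}$; the outputs $P_{R_\l}u_1\prod_{j\ge2}P_{N_j}u_j$ are almost orthogonal in the \emph{time} frequency, and the refined strip estimate (Lemma \ref{LEM:Gstr2}, obtained by interpolating the cube estimate \eqref{G3} with the Bernstein bound \eqref{G5} on a strip) supplies the gain $(M/N_2)^\delta\sim(N_{k+1}/N_1+1/N_2)^\delta$ while the $\l^2$ sum over strips restores $\|P_{N_1}\phi_1\|_{L^2}$ with no derivative loss. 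One also needs the $U^2$--$U^p$ logarithmic interpolation of Lemma \ref{LEM:Xinterpolate}\,(ii) to pass from free solutions to $Y^0$-norms; your sketch does not address this step. Without the strip decomposition and the asymmetric exponent choice, the argument cannot close at critical regularity.
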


\noi
Once again, the proof is reduced to establishing
certain multilinear Strichartz estimates.
See Propositions \ref{PROP:XLWP} and \ref{PROP:LWP2}.

\medskip

Lastly, we briefly discuss the case of a {\it partially} irrational torus.
Namely, we consider Strichartz estimates
on an irrational torus $\T^d_{\pmb{\al}}$,  when
some of $\al_j$'s  in \eqref{Torus}
are rationally dependent.
In this case, we may obtain improvements
over  Theorem \ref{THM:2},
yielding better local well-posedness results than
those presented in Theorems \ref{THM:3} and \ref{THM:4}.
For simplicity of presentation,  we only
consider an example of the three-dimensional torus of the form
$\T^2 \times \T_{\al_3}$, where two periods are the same.
By a change of spatial variables as before,
we 
  consider the Cauchy problem \eqref{eq:nls1},
where
the multiplier $Q(\bn)$  in \eqref{eq:Q}
 is given by
\begin{align}\label{eq:Q1}
Q(\bn) =  n_1^2 + n_2^2 + \theta_3 n_3^2,
\quad \theta_3 >0,
\end{align}

\noi
i.e.~we set $\theta_1 = \theta_2 =1$.
Then, we have the following local well-posedness result
for the  energy-critical quintic NLS
on a three-dimensional partially irrational torus.

\begin{theorem}\label{THM:5}
Suppose that
%
$Q(\bn)$ is given by \eqref{eq:Q1}.
Then,
the energy-critical quintic NLS,  \eqref{eq:nls1} with $k = 2$,
on $\T^3$
is locally well-posed in the critical Sobolev space $H^1(\T^3)$.

\end{theorem}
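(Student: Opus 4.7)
The plan is to adapt the Herr-Tataru-Tzvetkov framework already used for Theorem \ref{THM:4}. Since $s_c = 1$ coincides with the energy space, critical well-posedness in $H^1(\T^3)$ reduces (via the $U^2_\Delta$/$V^2_\Delta$ contraction scheme) to establishing multilinear Strichartz-type estimates for products of frequency-localized free solutions of $e^{-it\Delta}$. In the fully irrational case, the available bound in Corollary \ref{COR:1}(ii) is too weak to close this argument at $k = 2$, which is precisely why Theorem \ref{THM:4}(ii) only covers $k \geq 3$. The point of Theorem \ref{THM:5} is that the partial rationality $\theta_1 = \theta_2 = 1$ in \eqref{eq:Q1} opens the door to an improved linear Strichartz estimate that is just strong enough to handle the quintic critical nonlinearity.

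The first and main step is to establish a Strichartz estimate on $\T^2 \times \T_{\al_3}$ of the form
\begin{equation*}
\| e^{-it \Delta} P_N f\|_{L^p_{t,\bx}(I \times \T^3)} \lesssim N^{\frac{3}{2} - \frac{5}{p}+\eps}\|f\|_{L^2(\T^3)}
\end{equation*}
for some $p \leq \frac{16}{3}$, strictly improving on the threshold in \eqref{co-d3}. The mechanism is as follows. Writing $f(\bx) = \sum_{n_3} f_{n_3}(x_1,x_2)\, e^{2\pi i n_3 x_3}$, one has
\begin{equation*}
e^{-it\Delta} f(\bx) = \sum_{n_3 \in \Z} \bigl(e^{-it\Delta_2} f_{n_3}\bigr)(x_1,x_2)\, e^{2\pi i (n_3 x_3 + \theta_3 n_3^2 t)},
\end{equation*}
where $\Delta_2 = \partial_{1}^2 + \partial_{2}^2$ is the standard rational Laplacian on $\T^2$. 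This separates the evolution into a 2D rational piece and a 1D irrational piece. On the 2D factor, Bourgain's sharp $L^4_{t,x_1,x_2}(I \times \T^2)$-Strichartz estimate applies with only an $\eps$-loss thanks to the divisor-counting argument available on $\Z^2$. The 1D factor is a Weyl sum $\sum_{n_3} a_{n_3}(t,x_1,x_2)\, e^{2\pi i (n_3 x_3 + \theta_3 n_3^2 t)}$, which can be controlled by a Hardy-Littlewood circle method estimate on the single irrational circle $\T_{\al_3}$. Combining these two pieces via Minkowski's inequality and interpolation between $L^\infty_{t,\bx}$ (Bernstein) and the $L^4_{t,x_1,x_2}L^2_{x_3}$ bound just obtained is what produces the required gain over Theorem \ref{THM:2}.

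With this stronger linear estimate in hand, the second step is to upgrade it to the trilinear/quintilinear Strichartz estimates in $U^2_\Delta$/$V^2_\Delta$ demanded by the HTT machinery (the analogues of Propositions \ref{PROP:XLWP} and \ref{PROP:LWP2}), via a Littlewood-Paley decomposition, a high-low cascade on the maximal frequency, and the transfer principle relating $U^2_\Delta$-atoms to free solutions. The contraction mapping argument in the critical function space then proceeds exactly as in the proof of Theorem \ref{THM:4}(ii). The main technical obstacle I expect is concentrated in the first step: extracting the correct power of $N$ out of the mixed rational/irrational structure. A naive union bound over $n_3$-slices loses a full power of $N$, so the proof hinges on organizing an orthogonality argument that couples the sharp 2D $L^4$-bound in $(t, x_1, x_2)$ with a one-dimensional circle-method estimate in $(t, x_3)$, in such a way that the loss from the $n_3$-sum is exactly $N^{1/2}$ and matches the scaling $N^{3/2 - 5/p}$ required by the critical $H^1$-theory.
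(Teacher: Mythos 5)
Your high-level architecture agrees with the paper's: exploit $\theta_1=\theta_2=1$ to push the linear Strichartz threshold below what is available on a general irrational torus, then rerun the $U^2$/$V^2$ critical machinery of Section \ref{SEC:critical} (the paper indeed only modifies the choice of $p$ in \eqref{G8}). But the content of the theorem lives entirely in the improved linear estimate, and that step is not established in your proposal --- you yourself flag it as an unresolved ``technical obstacle.'' Two concrete problems. First, the target is not pinned down: for the quintic multilinear estimate (Proposition \ref{PROP:LWP2} with $d=3$, $k=2$) one needs the \emph{scaling-invariant} estimate \eqref{P1} for some admissible $p<\frac{20k}{3k+2}=5$; ``some $p\le\frac{16}{3}$'' is not a sufficient goal since $\frac{16}{3}>5$. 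The paper proves \eqref{P1} for $p>\frac{14}{3}$ (Proposition \ref{PROP:Bstrichartz}), and $\frac{14}{3}<5$ is exactly what makes \eqref{G8} nonempty at $k=2$. Second, the plan you actually describe --- obtain an $L^4_{t,x_1,x_2}L^2_{x_3}$ bound from Bourgain's 2D estimate and then interpolate with $L^\infty_{t,\bx}$ --- inherits the $N^\eps$ from the $p=4$ endpoint on $\T^2$ and therefore can only produce the lossy estimate \eqref{P2}. As the paper's own discussion of Demeter's result stresses, an $\eps$-loss is fatal at critical regularity, so this route as stated cannot close Theorem \ref{THM:5}. The additional ``one-dimensional circle-method estimate in $(t,x_3)$'' you invoke does not combine cleanly with the 2D bound because the time variable is shared by the two evolutions; the solution, unlike the kernel, does not tensor.

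For the record, the paper proceeds differently: it runs the $TT^*$/level-set scheme of Section \ref{SEC:level} on the kernel $\RR(\bx,t)=R_1(x_1,t)R_1(x_2,t)R_{\theta_3}(x_3,t)$, where the tensor structure is genuine. Since \emph{two} of the three Weyl-sum factors now obey the major-arc gain \eqref{C2b}, the major-arc part of the kernel satisfies $\|\RR_1\|_{L^\infty}\les NM\log M$ instead of $N^{2}M^{1/2}(\log M)^{1/2}$, which lowers the admissible range to $\ld\ges N^{1+\eps}$ and the level-set exponent from $q>6$ to $q>4$ (Lemma \ref{LEM:level2}); integrating the level sets gives \eqref{P1} for $p>\frac{14}{3}$. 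Your slicing idea is in fact salvageable, and arguably simpler, if you avoid the $p=4$ endpoint: apply the sharp 2D rational estimate $\|e^{-it\Delta_2}f_{n_3}\|_{L^p_{t,x_1,x_2}}\les N^{1-\frac4p}\|f_{n_3}\|_{L^2}$ directly at the exponent $p>4$ you need, use Plancherel in $x_3$ and Minkowski's inequality ($p\ge2$) to sum the slices in $\l^2_{n_3}$, and then apply Bernstein in $x_3$ at cost $N^{\frac12-\frac1p}$; the exponents combine to $N^{\frac32-\frac5p}$ with no circle method on the irrational circle at all. But that computation, which is where the theorem is actually proved, is missing from your proposal.
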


\noi
Previously, Herr-Tataru-Tzvetkov \cite{HTT11}
proved local well-posedness in the energy space $H^1(\T^3)$ of the energy-critical
quintic NLS on the three-dimensional standard torus $\T^3$.
By combining the results in \cite{Bo5} and \cite{HTT2}, 
we also see that 
the energy-critical
cubic NLS on the four-dimensional standard torus $\T^4$
is local well-posedness in the energy space $H^1(\T^4)$.
See also the work by 
the third author  \cite{W}
for some other critical local well-posedness results.
The result in \cite{W}, however, does not cover an energy-critical setting.
As mentioned earlier, 
 Herr \cite{Herr} proved
 local well-posedness in the energy space
 of the energy-critical
quintic NLS on three-dimensional Zoll manifolds.
We point out that Theorem \ref{THM:5} seems to be the first local well-posedness result
of the energy-critical NLS in its energy space $H^1(\T^3)$, 
where there is no common minimal period for geodesics.

We present a sketch of the proof in Appendix \ref{SEC:B}.
More precisely, we revisit the argument in
Section \ref{SEC:level}
and prove the sharp Strichartz estimate \eqref{P1} on $\T^3$
for $p > \frac{14}{3}$
{\it under the assumption \eqref{eq:Q1}.}
The rest follows from a slight modification of the argument in
Section \ref{SEC:critical}.
Lastly, note that
Theorem \ref{THM:5} combined
with the conservation  of mass and Hamiltonian
yields
small data global well-posedness of
the quintic NLS,  \eqref{eq:nls1} with $k = 2$,  in $H^1(\T^3)$,
just as in \cite{HTT11, Herr}.
Recently, global well-posedness (for large data)
of the energy-critical quintic NLS on 
the three-dimensional standard torus $\T^3$
and on the three-dimensional  sphere $\mathbb{S}^3$ was obtained
by Ionescu-Pausader \cite{IP}
and Pausader-Tzvetkov-Wang \cite{PTW}, respectively.
It would be of interest to investigate if 
global well-posedness of  the energy-critical quintic NLS holds 
in the setting of 
Theorem \ref{THM:5}.\footnote{After Strunk's result \cite{Strunk},
it is now of interest to 
study global well-posedness of  the energy-critical quintic NLS 
on a {\it general} three-dimensional irrational torus
in its energy space $H^1(\T^3)$.}

\medskip

This paper is organized as follows.
In Section \ref{SEC:2}, we prove Theorem \ref{THM:2} when $ d\geq 3$
and partially when $ d = 2$,
via multilinear estimates and a duality argument.
In Section \ref{SEC:level}, we
establish certain level set estimates
and prove Theorem \ref{THM:2} when $ d = 2$.
In Section \ref{SEC:4}, we prove
local well-posedness results in subcritical Sobolev spaces
(Theorem \ref{THM:3}).
In Section \ref{SEC:critical}, we prove
local well-posedness results in critical Sobolev spaces
(Theorem \ref{THM:4}).
In Appendix \ref{SEC:A},
we present a proof of \eqref{A-1} below,
using the Hardy-Littlewood circle method.
In Appendix \ref{SEC:B}, we sketch a proof of Theorem \ref{THM:5}.

\medskip

\noi
{\bf Acknowledgments:} 
Z.~Guo is supported in part by NNSF  of China (No.11371037, No.11271023) and Beijing Higher Education Young Elite Teacher Project.
Y.~Wang is supported by NNSF of China (No.11126247, No.11201143) and AARMS Postdoctoral Fellowship.
The authors would like to thank the anonymous referee for thoughtful
comments that have significantly improved the introduction of this paper. 
T.~Oh would like to express his sincere gratitude to Professor~Harold N.~Shapiro
for his support and teaching in mathematics, including the Hardy-Littlewood circle
method used in this paper,
as well as in life.

\section{Strichartz estimates: Part 1}
\label{SEC:2}

In this section, we prove our main result (Theorem \ref{THM:2}) for $ d\geq3$
and present a partial proof for $ d = 2$.
In \cite{Bo4}, Bourgain treated the three-dimensional case.
His argument  is based on the following estimate:
\begin{equation}
\int_{\T} \Big| \sum_{0\le n\le N} e^{2\pi  i n^2 t}\Big|^{r}\,dt \sim N^{r-2},
\label{A-1}
\end{equation}

\noi
for $r > 4$.
We first apply this argument and generalize the result in \cite{Bo4}
 to a general dimension $d \geq 3$.
When $r = 4$, \eqref{A-1} holds with a logarithmic loss
(Hua's inequality).
This yields the endpoint case for $ d= 3, 4$.
When $d = 2$, this also proves Theorem \ref{THM:2} (ii)  but only for $p \geq 8$.
In Subsection \ref{SUBSEC:d2},
we present
a simple duality argument when $d = 2$.
This  proves
Theorem \ref{THM:2} (i)  for $p > 12$.
The full proof of Theorem \ref{THM:2} for $ d = 2$,
i.e.~\eqref{P1} for $ p > \frac{20}{3}$ and \eqref{P2} for $ p \geq \frac{20}{3}$,  is presented in
Section \ref{SEC:level}.

\subsection{Higher dimensional case: $d\ge 3$}
\label{SUBSEC:d3}

In this subsection, we prove Theorem \ref{THM:2} when $d \geq 3$.
First, we prove the following lemma,
which can be viewed as a version of Hausdorff-Young's inequality.
\begin{lemma}[Hausdorff-Young's inequality]
\label{LEM:HY}
Let $d \geq 2$ and $\ba \in \Z^d$.
Given a sequence $\{c_\bn\}_{\bn\in \Z^d}$,
define $F_\ba (t)$ by
\begin{equation}
F_\ba(t) = \sum_{\bn\in \Z^d} c_\bn c_{\ba-\bn} e^{2 \pi i[Q(\bn) + Q(\ba-\bn)]t},
\label{A0}
\end{equation}

\noi
where $Q(\bn)$ is as in \eqref{eq:Q}.
Then, for $p \geq 2$, we have 
\begin{equation}
\|F_\ba(t)\|_{L^p_t([-1,1])} \les
\Bigg[\sum_{k\in \Z} \bigg(\sum_{|Q(\bn) + Q(\ba-\bn)-k|\le \frac12} |c_\bn c_{\ba-\bn}| \bigg)^{p'}\Bigg]^{\frac1{p'}},
\label{A1}
\end{equation}
where $\frac 1p+\frac{1}{p'} =1$.
\end{lemma}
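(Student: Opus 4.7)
The plan is to extend $F_\ba$ from $[-1,1]$ to all of $\R$ via a smooth cutoff, apply Hausdorff--Young on the line, and then group the resulting frequencies into unit-length integer bins.

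First, I would fix $\chi \in \S(\R)$ with $\chi \equiv 1$ on $[-1,1]$, so that $\|F_\ba\|_{L^p_t([-1,1])} \le \|\chi F_\ba\|_{L^p(\R)}$. Since $p \ge 2$, the Hausdorff--Young inequality on $\R$ gives $\|\chi F_\ba\|_{L^p(\R)} \le \|\widehat{\chi F_\ba}\|_{L^{p'}(\R)}$, and \eqref{A0} yields
\[
\widehat{\chi F_\ba}(\tau) \;=\; \sum_{\bn\in\Z^d} c_\bn\, c_{\ba-\bn}\; \ft\chi\bigl(\tau - Q(\bn) - Q(\ba-\bn)\bigr).
\]
The next step is to group the indices by the nearest integer: with $A_k := \{\bn : |Q(\bn)+Q(\ba-\bn)-k|\le \tfrac12\}$ and $d_k := \sum_{\bn\in A_k} |c_\bn c_{\ba-\bn}|$, the right-hand side of \eqref{A1} is exactly $\|d_k\|_{\ell^{p'}_k}$. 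Because $\ft\chi$ is Schwartz, for $\bn \in A_k$ one has $|\ft\chi(\tau - Q(\bn) - Q(\ba-\bn))| \les_M \jb{\tau-k}^{-M}$ for any $M$, and summing first over $\bn\in A_k$ and then over $k\in\Z$ yields
\[
|\widehat{\chi F_\ba}(\tau)| \;\les\; \sum_{k\in\Z} d_k\, \jb{\tau-k}^{-M}.
\]

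To close the estimate I would take the $L^{p'}_\tau$ norm of the right-hand side and compare it to a convolution. Setting $g(\tau) := \sum_k d_k\, \ind_{[k-\frac12,\,k+\frac12)}(\tau)$, one has $\|g\|_{L^{p'}(\R)} = \|d_k\|_{\ell^{p'}_k}$, and pointwise $\sum_k d_k \jb{\tau-k}^{-M} \les (g * \jb{\,\cdot\,}^{-M+1})(\tau)$. Young's convolution inequality, with $M$ chosen large enough that $\jb{\,\cdot\,}^{-M+1} \in L^1(\R)$, then bounds the $L^{p'}_\tau$ norm by $\|d_k\|_{\ell^{p'}_k}$, which is the claim.

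The argument is largely routine and I do not anticipate a serious obstacle. The only minor pieces of bookkeeping are the slight overlap of the bins $A_k$ at half-integer values of $Q(\bn)+Q(\ba-\bn)$ (contributing at most a factor of $2$ from double-counting) and the standard comparison between the discrete sum over $k$ and a convolution on $\R$.
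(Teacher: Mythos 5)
Your argument is correct, and it reaches the conclusion by a slightly different route than the paper. Both proofs share the same skeleton: multiply by a cutoff, take the Fourier transform in $t$, bin the phases $Q(\bn)+Q(\ba-\bn)$ into unit intervals around integers $k$, and exploit the rapid decay of the cutoff's Fourier transform. The difference is in how the $L^p$ bound is extracted. The paper proves only the endpoints: $p=\infty$ is trivial, and $p=2$ is done by Plancherel followed by a hands-on estimate (expanding $\|\sum_k B_k\|_{L^2}^2$, using $\int \jb{\tau-k}^{-2}\jb{\tau-k'}^{-2}d\tau \les \jb{k-k'}^{-2}$, then Cauchy--Schwarz and Young in the discrete variable $k$); the general case follows by interpolation. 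You instead invoke the dual form of Hausdorff--Young on $\R$ for all $p\ge 2$ at once, and then convert the resulting sum $\sum_k d_k \jb{\tau-k}^{-M}$ into a convolution $g*\jb{\cdot}^{-M+1}$ with $g=\sum_k d_k \ind_{[k-\frac12,k+\frac12)}$, closing with Young's convolution inequality. This is a clean one-pass argument that avoids interpolation and the quadratic expansion; the paper's version buys a little more self-containedness at $p=2$ (only Plancherel is needed there) at the cost of the interpolation step. Your bookkeeping remarks are the right ones: the overlap of adjacent bins at half-integer phase values costs at most a factor of $2$ in an upper bound, and the comparison $\jb{\tau-\sigma}\sim\jb{\tau-k}$ for $\sigma\in[k-\frac12,k+\frac12)$ justifies the convolution bound. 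One small point worth stating explicitly is that applying Hausdorff--Young in the direction $\|\chi F_\ba\|_{L^p}\le\|\widehat{\chi F_\ba}\|_{L^{p'}}$ uses Fourier inversion for $\chi F_\ba$, which is legitimate here because in all applications the coefficients $c_\bn$ are finitely supported (so $\chi F_\ba$ is a Schwartz function).
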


\noi
Lemma  \ref{LEM:HY} was used in \cite{Bo4} for the three-dimensional case.
See also Lemma 2 in \cite{CW}.
A proof for  general dimensions
in  \cite{CW} relies on Schur's lemma.
In the following,  we give a direct proof  for reader's convenience.

\begin{proof}
When $p = \infty$,
\eqref{A1} follows immediately.
Hence, by interpolation,
 it suffices to prove \eqref{A1} for  $p=2$.
 Let $\eta(t)$ be a  cutoff function supported on $[-2,2]$
 such that $\eta \equiv 1$ on $[-1,1]$.
 By Plancherel identity,  we have
\begin{align*}
 \|F_\ba(t)\|_{L^2_t([-1,1])}
 & \le\|F_\ba(t) \eta(t)\|_{L^2_t(\R)} \\
& =  \bigg\|\sum_\bn c_\bn c_{\ba-\bn} \ft\eta\big(\tau-[Q(\bn) + Q(\ba-\bn)]\big)\bigg\|_{L^2_\tau}\\
& =  \bigg\|\sum_{k\in \Z} \sum_{n\in I_{k,\ba}} c_\bn c_{\ba-\bn}
\ft\eta\big(\tau-[Q(\bn) + Q(\ba-\bn)]\big)\bigg\|_{L^2_\tau}\\
& =  \bigg\|\sum_{k\in \Z} B_k(\tau)\bigg\|_{L^2_\tau},
\end{align*}

\noi
where $I_{k,\ba} = \big\{\bn\in \Z^d: \,  Q(\bn) + Q(\ba-\bn)-k  \in(-\frac 12,\frac 12]\big\}$
and
 \[  B_k(\tau) = \sum_{\bn\in I_{k,\ba}} c_\bn c_{\ba-\bn} \widehat\eta\big(\tau-[Q(\bn) + Q(\ba-\bn)]\big).\]

\noi
Noting that
$|B_k(\tau)| \les \sum_{n\in I_{k,\ba}} |c_\bn c_{\ba-\bn}| \jb{\tau-k}^{-2}$,
we have
\begin{align*}
  \Big\|\sum_{k\in \Z} & B_k(\tau) \Big\|^2_{L^2_\tau}
 =  \Big\|\Big(\sum_{k\in \Z} B_k(\tau)\Big)^2 \Big\|_{L^1_\tau}
 \les  \sum_{k, k'} \|B_{k}(\tau)B_k'(\tau)\|_{L^1_\tau}\\
& \les
\sum_{k, k'}
\sum_{\bn\in I_{k,\ba}}  \sum_{\bn'\in I_{k',\ba}} |c_\bn c_{\ba-\bn}|  |c_{\bn'} c_{\ba-\bn'}|
\int_{\R}\jb{\tau-k}^{-2} \jb{\tau-k'}^{-2} d\tau
\\
& \les
\sum_{k, k'}
\frac 1{\jb{k-k'}^2}
\sum_{\bn\in I_{k,\ba}}
 \sum_{\bn'\in I_{k',\ba}} |c_\bn c_{\ba-\bn}|  |c_{\bn'} c_{\ba-\bn'}|
\intertext{By Cauchy-Schwarz inequality (in $k$) followed by Young inequality, we have}
&  \leq   \bigg[\sum_k \Big(\sum_{\bn\in I_{k,\ba}} |c_\bn c_{\ba-\bn}|\Big)^2\bigg]^{\frac12}
\bigg[\sum_k \Big( \sum_{k'}\sum_{\bn'\in I_{k',\ba}}
\frac{|c_{\bn'} c_{\ba-\bn'}|}{\jb{k-k'}^2}\Big)^2\bigg]^{\frac12}\\
&  \les  \sum_k \bigg(\sum_{\bn\in I_{k,\ba}} |c_\bn c_{\ba-\bn}|\bigg)^2.
\end{align*}

\noi
This completes the proof of Lemma \ref{LEM:HY}.
\end{proof}

Next,  we
state the main proposition.
 Theorem \ref{THM:2} then follows this proposition and Bernstein's inequality
 when $d \geq 3$.

%

\begin{proposition}\label{PROP:Str d3}
Let $f$ be a function on $\T^d$ with
 $\supp \ft f \subset [-N,N]^d$.

\noi
\textup{(i)}
Let  $d\ge 3$. Then, for $p\ge \max\big(\frac {16}d+, 4\big)$, we have
\begin{align}\label{Str d3}
\|e^{-it\Delta}f\|_{L^p_{t, \textup{loc}}L^4_{\bx}}
\les N^{\frac d4 -\frac 2p} \|f\|_{L^2},
\end{align}

\smallskip
\noi
\textup{(ii)}
Suppose that $d$ and $p$ satisfy
\textup{(ii.a)} $d = 2$, $p \geq 8$,
\textup{(ii.b)}
$(d, p) = (3, \frac{16}{3})$,
or
\textup{(ii.c)}
$(d, p) = (4, 4)$.
Then, we have
\begin{align}\label{Str d3a}
\|e^{-it\Delta}f\|_{L^p_{t, \textup{loc}}L^4_{\bx}}
\les N^{\frac d4 -\frac 2p}
(\log N)^\frac{2}{q}
\|f\|_{L^2},
\end{align}

\noi
where $q = p$ when $d = 3, 4$ and $q = 8$ when $d = 2$.

\end{proposition}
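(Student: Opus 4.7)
Following Bourgain's approach in \cite{Bo4}, I set $u = e^{-it\Delta} f$ with Fourier coefficients $c_\bn := \widehat{f}(\bn)$ supported in $[-N, N]^d$. The key observation is that $u^2$ has $\bx$-Fourier coefficients $F_\ba(t)$ as defined in Lemma \ref{LEM:HY}, so by Plancherel in $\bx$ we have $\|u(\cdot, t)\|_{L^4_\bx}^4 = \sum_\ba |F_\ba(t)|^2$. Since $p \geq 4$, taking the $L^{p/4}_t$ norm and applying Minkowski's inequality yields the core reduction
\[
\|u\|_{L^p_t L^4_\bx}^4 = \Big\| \sum_\ba |F_\ba|^2 \Big\|_{L^{p/4}_t} \leq \sum_\ba \|F_\ba\|_{L^{p/2}_t}^2.
\]

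To estimate $\|F_\ba\|_{L^{p/2}_t}$ I plan to apply Lemma \ref{LEM:HY} (valid since $p/2 \geq 2$), which transfers the bound onto an $\ell^{(p/2)'}$-type sum over the level sets
\[
A_{k, \ba} := \bigl\{ \bn \in [-N, N]^d \cap (\ba + [-N, N]^d) : |Q(\bn) + Q(\ba - \bn) - k| \leq \tfrac{1}{2} \bigr\}.
\]
Writing $M(k, \ba) := |A_{k, \ba}|$, a Cauchy--Schwarz inside each level together with a H\"older summation in $k$ (and the identity $\sum_\ba \sum_\bn |c_\bn c_{\ba - \bn}|^2 = \|f\|_{L^2}^4$) would reduce matters to a uniform-in-$\ba$ estimate on a power-sum $\sum_k M(k, \ba)^{s}$, with $s = s(p) \geq 1$ prescribed by the duality.

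The main step is then the counting. I use the completed-square identity $Q(\bn) + Q(\ba - \bn) = 2 Q(\bn - \ba/2) + Q(\ba)/2$, which exhibits $A_{k, \ba}$ as a thin ellipsoidal shell. Because $Q(\bn) = \sum_j \theta_j n_j^2$ separates, the associated exponential sum $\sum_\bn e^{2\pi i Q(\bn) t}$ factors as a product $\prod_{j = 1}^d \sum_{n_j} e^{2\pi i \theta_j n_j^2 t}$ of one-dimensional Gauss sums; absorbing each $\theta_j$ via a time rescaling and applying \eqref{A-1} in each coordinate at some exponent $r > 4$, followed by H\"older to recombine the factors, I obtain $\int_I |\sum_{\bn \in [-N, N]^d} e^{2\pi i Q(\bn) t}|^R \, dt \lesssim N^{dR - 2}$ whenever $R > 4/d$, which converts into the required moment bound on $M(k, \ba)$. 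At the endpoints in (ii.a)--(ii.c) one is forced to the borderline $r = 4$, where \eqref{A-1} fails and must be replaced by Hua's inequality $\int_\T |\sum_{|n| \leq N} e^{2\pi i n^2 t}|^4\, dt \lesssim N^2 \log N$; the resulting deficit produces the factor $(\log N)^{2/q}$ appearing in \eqref{Str d3a}.

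Collecting these ingredients should yield $\|u\|_{L^p_t L^4_\bx}^4 \lesssim N^{d - 8/p} \|f\|_{L^2}^4$ in the setting of part (i) (with an additional $(\log N)^{4/q}$ factor at the endpoints of part (ii)), and extracting fourth roots gives the claim. The hardest part will be the counting step: classical number-theoretic inputs such as divisor bounds or representation numbers on spheres are tied to rational quadratic forms and are unavailable in the irrational setting, so one must route the counts through the analytic estimate \eqref{A-1}. Since \eqref{A-1} is sharp only when $r > 4$, this range is what dictates the admissible values of $p$ in both parts of the proposition and also explains why the present approach reaches only $p \geq 8$ in dimension $d = 2$ (recorded in (ii.a)), with the sharper two-dimensional estimates being deferred to Section \ref{SEC:level}.
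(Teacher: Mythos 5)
Your outline reproduces the paper's argument almost step for step: the Minkowski/Plancherel reduction to $\sum_\ba \|F_\ba\|_{L^{p/2}_t}^2$, Lemma \ref{LEM:HY}, the completed-square identity turning the level sets into translates of $\{|Q(\bn)-\l|\le 1\}$, Cauchy--Schwarz plus H\"older in $k$ reducing to a power sum of the counts, the factorization of $\sum_\bn e^{2\pi i Q(\bn)t}$ into one-dimensional Weyl sums estimated by \eqref{A-1} (and Hua's inequality at the borderline exponent for part (ii)). This is exactly the route taken in Section \ref{SUBSEC:d3}.

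There is, however, one step that fails as described. Converting $\int_I \big|\sum_\bn e^{2\pi i Q(\bn)t}\big|^{p/4}\,dt$ into the moment bound $\sum_k M(k,\ba)^{s}$ with $s = \frac{p}{p-4}$ is a Hausdorff--Young inequality $\|\ft g\|_{\ell^{s}} \le \|g\|_{L^{s'}}$ with $s' = p/4$, and this requires $s \ge 2$, i.e.\ $p \le 8$; for $p>8$ the inequality goes the wrong way and your ``conversion into the required moment bound'' is not available. Since the range in part (i) contains arbitrarily large $p$ for every $d\ge 3$ (and $p>8$ is needed even to state (i.b)--(i.d) in full), this is not a vacuous case. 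The paper closes it by first proving the estimate at $p=8$ and then invoking Bernstein's inequality in the time variable (the solution has time-frequency support of size $O(N^2)$), which gives $\|e^{-it\Delta}f\|_{L^p_tL^4_\bx} \les N^{\frac14 - \frac2p}\|e^{-it\Delta}f\|_{L^8_tL^4_\bx}$ and exactly the right power of $N$; note this also forces $q=8$ rather than $q=p$ in (ii.a). You should add this reduction; the rest of your plan, including the $\theta_j$-rescaling of \eqref{A-1} and the H\"older recombination across coordinates, is sound.
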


\noi

\noi
 Bourgain  proved \eqref{Str d3} for $ d= 3$.
 See Proposition 1.1 in \cite{Bo4}.
Our proof follows  the ideas developed in \cite{Bo4}.
 By setting $p = 4$ when $ d \geq 5$
 and $p = \frac{16}{d}+$ when $ d= 3, 4$,
 Proposition \ref{PROP:Str d3}
yields the $L^4$-Strichartz estimate,
which
 improves  the result in \cite{CW} for $ d\geq 3$.
Note that
 the Strichartz estimate \eqref{Str d3} is essentially sharp in higher dimensions ($d \geq 4$).
Indeed, on $\R^d$ with $ d \geq 2$,
by Sobolev inequality and interpolation of \eqref{P3} and \eqref{P4}
\begin{align*}
\|e^{-it\Delta}f\|_{L^p_{t}L^4_{x} (\R\times \R^d)}
\les
N^{2(\frac 14 -\frac 1p)}\|e^{-it\Delta}f\|_{L^4_{t, x} (\R\times \R^d)}
\les  N^{\frac d4 -\frac 2p} \|f\|_{L^2(\R^d)},
\end{align*}

\noi
for $p \geq 4$.

We first use Proposition \ref{PROP:Str d3}
to prove Theorem \ref{THM:2} when $d \geq 3$.

\begin{proof}[Proof of Theorem \ref{THM:2} for $d \geq 3$]
Suppose that $p \geq \max(\frac{16}{d}+, 4)$,  satisfying the hypothesis of Theorem \ref{THM:2} (i).
By Bernstein's inequality and Proposition \ref{PROP:Str d3} (i), we have
\[
\|e^{-it\Delta} f\|_{L^p_{t,\bx}} \les N^{\frac d4- \frac dp} \|e^{-it\Delta} f\|_{L^p_tL^4_{\bx}}
\les N^{\frac d2-\frac {d+2}p} \|f\|_{L^2}.
\]

\noi
By repeating the same argument with Proposition \ref{PROP:Str d3} (ii),
we obtain
Theorem \ref{THM:2} (ii)
when $d = 3, 4$.
When $d = 2$, this yields Theorem \ref{THM:2} (ii) only for $p \geq 8$.
\end{proof}

We now present the proof of Proposition \ref{PROP:Str d3}.

\begin{proof}[Proof of Proposition \ref{PROP:Str d3}]
(i)
Let  $Q(\bn)$ be as in \eqref{eq:Q}.
Then,  we have
\begin{equation}
\label{A1a}
(e^{-it\Delta} f )(\bx) = \sum_{\bn\in \Z^d} \ft f (\bn) e^{2\pi  i (\bn\cdot \bx + Q(\bn) t)}.
\end{equation}

\noi
With $c_\bn = \ft f (\bn)$,
let $F_\ba(t)$ be as in \eqref{A0}.
Then,
by Minkowski's integral inequality with $p \geq 4$, we have
\begin{align}
\label{A2}
 \|e^{-it\Delta}f\|^2_{L^p_t L^4_{\bx}}
&  = \|(e^{-it\Delta} f)^2\|_{L^{\frac p2 }_t L^2_{\bx}}
=  \bigg\|\Big(\sum_{\ba\in \Z^d}
|F_\ba(t)|^2\Big)^\frac{1}{2}
\bigg\|_{L^{\frac p 2}_t} \nonumber \\
&  \le
 \bigg(\sum_{\ba\in \Z^d}
\|F_\ba(t)\|_{L^\frac{p}{2}_t}^2
\bigg)^{1/2}.
\end{align}

%


For $\l \in \Z$,
let
 $A_\l = \{\bn\in \Z^d :\,
 |Q(\bn)-\l| \le 1\} \cap [-N, N]^d$. 
Noting that $Q(\bn) + Q(\ba- \bn) = \frac 12 \big(Q(2\bn-\ba) + Q(\ba)\big)$,
the condition $|Q(\bn) + Q(\ba-\bn) - k|\le \frac 12$ is equivalent to $2\bn\in \ba + A_\l$
with $\l = 2k- Q(\ba)$.
Note that, $\big|\{ \l \in \Z:\, 2\bn \in \ba +A_\l\}\big| \les 1$
for all $\bn \in \Z^d$.
Then, by
Lemma \ref{LEM:HY} and Cauchy-Schwarz and H\"older's  inequalities, we have
\begin{align}\label{A3}
\|F_\ba\|_{L^{\frac p2}_t}
& \le
\Bigg[\sum_{\l \in \Z} \bigg(\sum_{2\bn\in \ba + A_\l} |c_\bn c_{\ba- \bn}| \bigg)^{\frac p{p-2}}
\Bigg]^{\frac{p-2}p} \notag \\
 &\les \Bigg[\sum_\l |A_\l|^{\frac p{2(p-2)}}
 \bigg(\sum_{2\bn\in \ba + A_\l} |c_\bn c_{\ba-\bn}|^2 \bigg)^{\frac p{2(p-2)}} \Bigg]^{{\frac{p-2}p}} \nonumber\\
& \leq   \bigg(\sum_\l |A_\l|^{\frac p{p-4}}\bigg)^{\frac{p-4}{2p}}
\bigg( \sum_{\l } \sum_{2\bn\in \ba + A_\l}
|c_\bn c_{\ba-\bn}|^2 \bigg)^{\frac 12} \notag \\
& \sim   \bigg(\sum_\l |A_\l|^{\frac p{p-4}}\bigg)^{\frac{p-4}{2p}}
\bigg(  \sum_{\bn \in \Z^d }
|c_\bn c_{\ba-\bn}|^2 \bigg)^{\frac 12}.
\end{align}

\noi
From \eqref{A2} and \eqref{A3}, we have
\begin{equation}
\label{A4}
\|e^{-it\Delta} f\|_{L^p_tL^4_{\bx}} \le C
\bigg(\sum_\l |A_\l|^{\frac p{p-4}}\bigg)^{\frac{p-4}{4p}} \|f\|_{L^2}.
\end{equation}

Now, let
 $\eta(t)$ be a smooth function with a compact support $I \subset \R$
 such that $\ft \eta \geq 0$ and $\ft \eta \geq 1$ on $[-1, 1]$.
Now we estimate $\Big(\sum_\l |A_\l|^{\frac p{p-4}}\Big)^{\frac{p-4}{4p}}$, using
\[
|A_\l|\le \int \Big[\sum_{\substack{\bn\in\Z^d\\|n_j|\le N}} e^{2\pi iQ(\bn)t}\Big]\eta(t)e^{-2
\pi i\l t} dt.
\]

If $p\le 8$, then we have $\frac p{p-4}\ge 2$.
Then,  by Hausdorff-Young's inequality, we have
\begin{align}\label{A5}
\Big(\sum_\l |A_\l & |^{\frac p{p-4}}\Big)^{\frac{p-4}{4p}}
 \les
\bigg[\int_{I}\prod_{j=1}^d \Big| \sum_{ |n_j|\le N}
e^{ 2\pi i\theta_j n_j^2 t}\Big|^{\frac p4}\,dt\bigg]^{\frac1p} \nonumber\\
&  \les
\prod_{j=1}^d
\bigg[\int_{I} \Big| \sum_{ |n_j|\le N} e^{2\pi  i\theta_j n_j^2 t}\Big|^{\frac {dp}4}\,dt\bigg]^{\frac1{dp}}
 \les
\bigg[\int_{I} \Big| \sum_{ 0 \leq n\le N} e^{2\pi  i n^2 t}\Big|^{\frac {dp}4}\,dt\bigg]^{\frac1p}.
\end{align}

\noi
Note that $r= \frac {dp}4 >4$, since $p> \frac{16}d$.
Then, by an  application of the Hardy-Littlewood circle method
(see Appendix \ref{SEC:A}),
we have
\begin{equation}
\int_{I} \Big| \sum_{0\le n\le N} e^{2\pi  i n^2 t}\Big|^{r}\,dt \sim N^{r-2},
\label{A6}
\end{equation}

\noi
yielding
$\eqref{A5}\les N^{\frac d4-\frac 2p}$.
Hence, \eqref{Str d3} follows from \eqref{A4} in this case.

If  $p>8$, then
by  Bernstein's inequality (in $t$), 
we have
\begin{equation}\label{A7}
\|e^{-it\Delta} f\|_{L^p_tL^4_{\bx}} \le C N^{\frac 14-\frac 2p} \|e^{-it\Delta} f\|_{L^8_tL^4_{\bx}}.
\end{equation}

\noi
Then, \eqref{Str d3} follows from \eqref{A7}
and \eqref{Str d3} for $p = 8$.

\smallskip

\noi
(ii)
When $(d, p) = (2, 8),$ $(3, \frac{16}{3})$, or $(4, 4)$, we
have $r = \frac{dp}{4} = 4$.
In this case,  \eqref{A6} does not hold.
Nonetheless, by Hua's inequality \cite{V}, we have
\begin{equation}
\int_{I} \Big| \sum_{0\le n\le N} e^{2\pi  i n^2 t}\Big|^{4}\,dt \les  N^{2} (\log N)^2.
\label{A8}
\end{equation}

\noi
See also \cite[(8.13)]{IK}.
Then, \eqref{Str d3a} follows from \eqref{A8} and  repeating the computation in (i).
This completes the proof of Proposition \ref{PROP:Str d3}.
\end{proof}


\subsection{Two dimensional case}\label{SUBSEC:d2}
For $d=2$, the sharp estimate \eqref{P1} is not covered by
Proposition \ref{PROP:Str d3}.
In the following, we use a simple duality argument
and  prove the sharp Strichartz estimate \eqref{P1} for $p>12$.
Without loss of generality, we  assume that
\begin{align}\label{B1}
 Q(\mathbf{n})=  n_1^2 +\theta  n_2^2, \quad \tfrac{1}{C} \leq \theta \leq C.
\end{align}

\noi
Then,
the local-in-time Strichartz estimate
can be expressed as 
\begin{equation}\label{B2}
\bigg\|\sum_{\mathbf{n}\in S_{N}}a_\mathbf{n}
e^{2\pi i(\mathbf{n}\cdot\mathbf{x}+Q(\mathbf{n})t)}\bigg\|_{L^p_{t, \bx}(I\times \T^2)}\leq K_{p,N}\bigg(\sum_{\bn\in S_N}|a_\mathbf{n}|^2\bigg)^{1/2},
\end{equation}

\noi
where $S_{N}$ denotes the following set:
\begin{equation}\label{B2a}
S_N: =  \big\{(n_1,n_2)\in \mathbb{Z}^2: |n_j|\leq N,\, j=1,2 \big\}.
\end{equation}

\noi
Our task is to seek for an optimal constant $K_{p, N}$.
By duality, \eqref{B2} is equivalent to
\begin{equation}\label{B3}
\bigg(\sum_{\bn\in S_N}\big|\widehat{f}(\mathbf{n}, Q(\mathbf{n}))\big|^2\bigg)^\frac{1}{2}
\leq K_{p, N}
\|f\|_{L^{p'}_{t, \bx}(I\times \T^2)},
\end{equation}

\noi
for any $f \in L^{p'}(I\times \T^2)$,
where $\frac 1 p+  \frac 1{p'} = 1$.
Here,
the Fourier transform $\ft f$ is defined by
\[
\widehat f(\bn,\tau) = \int_\R \int_{\T^2} e^{-2\pi i \bn\cdot\bx} e^{-2\pi i  \tau t } \ind_{I}(t) f(\bx,t)\,d\bx\,dt.
\]

%

\noi
Then,  \eqref{P1} for $p > 12$  follows once we prove the next proposition.

\begin{proposition}\label{PROP:Str d2} For $p> 12$,
we have $K_{p, N} \les N^{1-\frac 4p}$.
Namely,  we have
\begin{equation}
\bigg(\sum_{\bn \in S_N }
\big|\widehat{f}(\bn, Q(\bn))\big|^2 \bigg)^\frac{1}{2}
\les N^{1-\frac{4}{p}} \|f\|_{L^{p'}_{t, \bx}(I\times \T^2)}^2.
\label{B4}
\end{equation}
\end{proposition}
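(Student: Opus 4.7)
The plan is to use a $TT^{\ast}$ duality argument to reduce \eqref{B4} to an $L^{p/2}$-bound on the Schr\"odinger kernel on $\T^2 \times \R$, and then to exploit the separable form $Q(\bn)=n_1^2+\theta n_2^2$ from \eqref{B1} to reduce everything to a pair of one-dimensional estimates on the circle.

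To set up the duality, I will let $T\colon\ell^{2}(S_{N})\to L^{p}_{t,\bx}(I\times\T^{2})$ denote the extension operator $T(a)(\bx,t)=\sum_{\bn\in S_{N}}a_{\bn}e^{2\pi i(\bn\cdot\bx+Q(\bn)t)}$, so that \eqref{B4} is equivalent to $\|T\|_{\ell^{2}\to L^{p}}^{2}\les N^{2-8/p}$. By the standard $TT^{\ast}$ identity this is equivalent in turn to the kernel estimate $\|K_{N}\ast f\|_{L^{p}_{t,\bx}(I\times\T^{2})}\les N^{2-8/p}\|f\|_{L^{p'}_{t,\bx}(I\times\T^{2})}$, where $K_{N}(\bx,t):=\sum_{\bn\in S_{N}}e^{2\pi i(\bn\cdot\bx+Q(\bn)t)}$ is the Schr\"odinger kernel. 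A single application of Young's convolution inequality, with the admissible relation $1/p+1=2/p+1/p'$, then reduces the entire problem to proving
\begin{equation*}
\|K_{N}\|_{L^{p/2}_{t,\bx}(I\times\T^{2})}\les N^{2-8/p}.
\end{equation*}

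Since $Q$ is diagonal, the kernel factors as $K_{N}(\bx,t)=K^{(1)}(x_{1},t)\,K^{(2)}(x_{2},t)$, where $K^{(j)}(x,t):=\sum_{|n|\le N}e^{2\pi i(nx+\theta_{j}n^{2}t)}$ is $e^{-it\theta_{j}\partial_{x}^{2}}$ applied to the 1D Dirichlet kernel of order $N$. Using the pointwise factorization and Fubini, I would apply H\"older's inequality in $t$ with exponents $(\infty,1)$ to the identity $\|K_{N}\|_{L^{p/2}_{t,\bx}}^{p/2}=\int_{I}\|K^{(1)}(\cdot,t)\|_{L^{p/2}_{x}}^{p/2}\|K^{(2)}(\cdot,t)\|_{L^{p/2}_{x}}^{p/2}\,dt$, producing the asymmetric bound
\begin{equation*}
\|K_{N}\|_{L^{p/2}_{t,\bx}}\le\|K^{(1)}\|_{L^{\infty}_{t}L^{p/2}_{x}(I\times\T)}\cdot\|K^{(2)}\|_{L^{p/2}_{t,x}(I\times\T)}.
\end{equation*}
For the first factor, the trivial identity $\|K^{(1)}(\cdot,t)\|_{L^{2}(\T)}\sim N^{1/2}$ combined with the one-dimensional Bernstein inequality on $\T$ gives $\|K^{(1)}\|_{L^{\infty}_{t}L^{p/2}_{x}}\les N^{1-2/p}$. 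For the second factor, the hypothesis $p>12$ places $p/2$ strictly above $6$, so Bourgain's scale-invariant 1D Strichartz estimate $\|e^{-it\partial_{x}^{2}}g\|_{L^{q}_{t,x}(I\times\T)}\les N^{1/2-3/q}\|g\|_{L^{2}(\T)}$ is available at $q=p/2$; applied to $g(x):=\sum_{|n|\le N}e^{2\pi inx}$, whose $L^{2}(\T)$-norm is $\sim N^{1/2}$, it yields $\|K^{(2)}\|_{L^{p/2}_{t,x}}\les N^{1-6/p}$. Multiplying the two bounds produces $N^{2-8/p}$, completing the reduction.

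The main obstacle is the threshold $p>12$, which arises precisely because invoking Bourgain's 1D $L^{q}$-Strichartz at $q=p/2$ requires $q>6$. This is exactly the point at which the duality argument degenerates; the remaining subcritical window $20/3<p\le 12$ asserted in Theorem \ref{THM:2} (i.a) is inaccessible by this elementary reduction to 1D and is the motivation for the more delicate level-set analysis carried out in Section \ref{SEC:level}.
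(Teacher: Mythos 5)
Your argument is correct and is essentially the paper's own proof: the same duality/Young reduction to an $L^{p/2}_{t,\bx}$ bound on the kernel $\mathbf{R}=R_1R_\theta$, the same factorization and asymmetric H\"older split in $t$, with Bourgain's one-dimensional $L^q$ estimate (valid for $q=p/2>6$, whence $p>12$) on one factor and a fixed-time Bernstein/Sobolev bound $N^{1-2/p}$ on the other. The only cosmetic difference is that you apply the 1D Strichartz estimate to the factor with coefficient $\theta$ rather than to $R_1$; this is harmless, since the time rescaling $t\mapsto t/\theta$ converts the phase $\theta n^2 t$ into $n^2 t$ on another compact interval.
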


\begin{remark}\rm
Recall that, on the standard torus $\T^2$,
i.e.~with $Q(\bn) = n_1^2 + n_2^2$,
Bourgain \cite{Bo2} proved $K_{p, N} \les N^{1-\frac 4p}$ for $ p > 4$.
Hence, Proposition \ref{PROP:Str d2} states that, on an irrational torus,
the same estimate for $K_{p, N}$ holds, but only for $p > 12$.

\end{remark}


\begin{proof}
Without loss of generality, assume that $I$ is centered at 0.
Let $\RR$ be a kernel defined by
\begin{equation}\label{B4a}
\mathbf{R}(\bx, t) =
\sum_{\bn \in S_N}  e^{2\pi i (\bn \cdot \bx +  Q(\mathbf{n}) t)}.
\end{equation}

\noi
Then, defining $R_\theta$ by
\begin{equation}
R_\theta(x, t) =
\sum_{|n| \leq N}  e^{2\pi i (nx  +  \theta n^2  t)},
\label{B4b}
\end{equation}

\noi
we have
$\mathbf{R}(\bx, t) = R_1(x_1, t)R_\theta(x_2, t).$
%
%
From Proposition 3.114 in \cite{Bo2}, we have
\begin{align}
\|R_1(x, t)\|_{L^p_{t, x}(I\times \T)}\leq C_{p, I}N^{1- \frac 3p},
\label{B5}
\end{align}

\noi
for $p > 6$.
Bourgain's argument is based on an application of the Hardy-Littlewood circle method.
See also Lemma 2.4 in \cite{Hu-Li}
for a simpler proof based on the Poisson summation formula.
Note that \eqref{B5} does not hold for $p = 6$.
See Rogovskaya \cite{R} and \cite{Bo2}.

By H\"older's inequality, \eqref{B5},
and Sobolev inequality, we have
\begin{align}
\|\mathbf{R}\|_{L^p_{t, \bx}(I\times \T^2)}
& =  \bigg(\int_I \|R_1(x_1, t)\|^p_{L^p_{x_1}}
\| R_\theta (x_2,t)\|^p_{L^p_{x_2}}\,dt\bigg)^\frac{1}{p} \notag \\
& \le \|R_1(x_1,t)\|_{L^p_{t,x_1}}
  \|R_\theta(x_2,t)\|_{L^\infty_t(I;  L^p_{x_2})}  \notag \\
& \les N^{1 - \frac 3p} \|R_\theta(x_2,t)\|_{L^\infty_t(I;  H^{\frac{1}{2}-\frac 1p}_{x_2})}
\les N^{2 - \frac 4 p}.
\label{B6}
\end{align}

\noi
By \eqref{B4a}, Young's inequality, and \eqref{B6}, we have
\begin{align*}
 \sum_{\bn\in S_N }
 \big| \widehat {f}(\bn\,,  Q(\bn))\big|^2
& =
\jb{ \mathbf{R}  * \ind_I f, \ind_I f }
\leq \|\mathbf{R}\|_{L^\frac{p}{2}_{t, \bx}(2I \times \T^2)}\|f\|_{L^{p'}_{t, \bx}(I\times \T^2)}^2\\
& \les N^{2-\frac{8}{p}}\|f\|_{L^{p'}_{t, \bx}(I\times \T^2)}^2
\end{align*}

\noi
as long as  $p > 12$.
\end{proof}

\section{Strichartz estimates: Part 2}
\label{SEC:level}

\subsection{Level set estimates} 
In this section, we prove
Theorem \ref{THM:2} when $d = 2$.
%
The main ingredient is the level set estimates
on irrational tori
in Proposition \ref{PROP:level} below.
For level sets estimates on the usual torus $\T^d$, see \cite{Bo2, Hu-Li}.
It turns out that
these level set estimates are useful only when $d = 2, 3$
(see Remark \ref{REM:level}),
but we state and prove the results for a general dimension.
In the following, we assume that  $\theta_1 = 1$ in \eqref{eq:Q}
for simplicity.
Namely, we consider
\begin{equation}
 Q(\bn) = n_1^2 + \theta_2 n_2^2 + \cdots + \theta_d n_d^2.
\label{CQ}
 \end{equation}

\noi
Also, let $S_N = \{ \bn \in \Z^d: \, |n_j| \leq N, \, j = 1, \dots, d \}$.

\begin{proposition}\label{PROP:level}

Let $I$ be a compact interval in $\R$.
Given
\begin{equation}
f (\bx) = \sum_{\bn \in S_N} c_\bn e^{2\pi i \bn \cdot \bx}
\label{C0}
\end{equation}

\noi
such that $\|c_\bn\|_{\l^2_\bn} = 1$,
define  the distribution function $A_\ld$
by
\begin{equation}
 A_\ld = \big\{ (\bx, t) \in \T^2 \times I: \, \big|\big(e^{-it \Dl} f\big)(\bx, t)\big| > \ld\big\}.
\label{C00a}
\end{equation}

\noi
\textup{(i)}
For any $\eps > 0$, we have
\begin{equation}
|A_\ld |
\les N^{2(d-1)\frac{1}{1+6\eps} } \ld^{-6 + \frac{24}{1+6\eps}\eps}
\label{C1}
\end{equation}

\noi
for $\ld \ges N^{\frac{d}{2} - \frac{1}{4}+\eps}$.

\medskip
\noi
\textup{(ii)} 
Let  $ q > 6$.
Then, there exists small $\eps >0$ such that
\begin{equation}
|A_\ld |\les N^{\frac{d}{2} q - (d+2)} \ld^{-q}
\label{E0}
\end{equation}

\noi
for $\ld \ges N^{\frac{d}{2}-\eps}$.


In \eqref{C1} and \eqref{E0},
 the implicit constants depend on $\eps > 0$, $q > 6$, and $|I|$,
but are independent of $f$.

\end{proposition}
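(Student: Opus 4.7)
The plan is to adapt Bourgain's duality / $TT^*$ strategy for level set estimates to the irrational torus, where the $L^p$-bounds of the kernel $\mathbf{R}$ from Section \ref{SEC:2} replace the divisor-bound information used on the standard torus. Set $u(\bx,t) := (e^{-it\Delta}f)(\bx, t)$ and $g(\bx, t) := \overline{u(\bx, t)}/|u(\bx, t)|\cdot \mathbf{1}_{A_\lambda}(\bx, t)$ (extended by zero where $u$ vanishes), so that $|g|=\mathbf{1}_{A_\lambda}$. Then by Plancherel, Cauchy-Schwarz, and the normalization $\|c_\bn\|_{\ell^2_\bn}=1$,
\[
\lambda\,|A_\lambda| \;\leq\; \big|\langle u, g\rangle\big| \;=\; \Big|\sum_{\bn \in S_N} c_\bn\,\overline{\ft g(\bn, Q(\bn))}\Big| \;\leq\; \Big(\sum_{\bn \in S_N}|\ft g(\bn, Q(\bn))|^2\Big)^{1/2}.
\]
Expanding the squared right-hand side as an auto-correlation identifies it with $\langle \overline{\mathbf{R}}*\wt g,\,g\rangle$, where $\mathbf{R}$ is the kernel from \eqref{B4a} and $\wt g$ denotes an appropriate reflection of $g$. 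Hence
\[
\lambda^2\,|A_\lambda|^2 \;\les\; \big\langle |\mathbf{R}|\,*\,\mathbf{1}_{A_\lambda},\,\mathbf{1}_{A_\lambda}\big\rangle,
\]
reducing the task to bounding this bilinear form.

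Next I would decompose $\mathbf{R}$ into a pointwise-controlled ``good'' part and a concentrated ``bad'' part. Using the factorization $\mathbf{R}(\bx, t) = \prod_{j=1}^d R_{\theta_j}(x_j, t)$, with $R_\theta(x,t) = \sum_{|n|\leq N}e^{2\pi i(nx+\theta n^2 t)}$, and Weyl / circle-method bounds on $R_\theta$ (giving $|R_\theta| \les N^{1/2+\eps}$ off a small neighborhood of major arcs in $t$, together with the $L^p$-control $\|R_\theta\|_{L^p_{t,x}}\les N^{1-3/p}$ for $p > 6$ recalled in \eqref{B5}), I would split $\mathbf{R}=\mathbf{R}_{\textup{good}}+\mathbf{R}_{\textup{bad}}$ at a threshold calibrated to the hypothesis on $\lambda$. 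For (ii), the decomposition is tuned so that $|\mathbf{R}_{\textup{good}}| \les N^{d/2+\eps}$; for (i), I use a coarser decomposition in which only a single factor $R_{\theta_j}$ is controlled off major arcs while the others are estimated trivially, giving $|\mathbf{R}_{\textup{good}}| \les N^{d - 1/2+\eps}$. The good-part contribution
\[
\big\langle |\mathbf{R}_{\textup{good}}| * \mathbf{1}_{A_\lambda},\,\mathbf{1}_{A_\lambda}\big\rangle \;\les\; \|\mathbf{R}_{\textup{good}}\|_{L^\infty}\,|A_\lambda|^2
\]
can then be absorbed into the left-hand side $\lambda^2|A_\lambda|^2$, exactly under the respective hypotheses $\lambda\gtrsim N^{d/2-\eps}$ in (ii) and $\lambda\gtrsim N^{d/2-1/4+\eps}$ in (i).

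For the remaining bad-part contribution I apply H\"older and Young's inequalities: $\langle|\mathbf{R}_{\textup{bad}}|*\mathbf{1}_{A_\lambda},\mathbf{1}_{A_\lambda}\rangle \les \|\mathbf{R}_{\textup{bad}}\|_{L^{q/2}}\,|A_\lambda|^{2 - 2/q}$, which yields $|A_\lambda| \les \lambda^{-q}\|\mathbf{R}_{\textup{bad}}\|_{L^{q/2}}^{q/2}$. Inserting the sought Gauss-sum / major-arc bound $\|\mathbf{R}_{\textup{bad}}\|_{L^{q/2}}^{q/2}\les N^{dq/2-(d+2)}$ then recovers (ii) for every $q>6$; (i) follows by running the $\eps$-parametrized variant of this decomposition and optimizing to arrive at the stated exponents $N^{2(d-1)/(1+6\eps)}\lambda^{-6+24\eps/(1+6\eps)}$, which correspond morally to the $q=6$ endpoint of (ii) paid for with an $\eps$-loss in the threshold for $\lambda$. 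I expect the main obstacle to be establishing the sharp $L^{q/2}$-bound for $\mathbf{R}_{\textup{bad}}$ in the intermediate range $q/2 \in (3,\tfrac{2(d+2)}{d}]$, where the $L^p$-estimates for $\mathbf{R}$ in Section~\ref{SEC:2} fall short of the scaling-critical power; this requires a detailed Gauss-sum analysis on the major arcs (in the spirit of the Hardy-Littlewood circle-method arguments of Appendix~\ref{SEC:A}) to recover the correct power of $N$.
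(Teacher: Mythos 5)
Your opening reduction is exactly the paper's: the dual formulation $\ld^2|A_\ld|^2\les\jb{\RR*\Theta_\ld,\Theta_\ld}$ with $\Theta_\ld$ the unimodular multiple of $\ind_{A_\ld}$ is \eqref{C5b}, and the idea of splitting $\RR$ along major arcs is also the right starting point. The gap is in how you treat the two pieces. First, the ``good part'': on an \emph{irrational} torus the Weyl/minor-arc gain $N^{1/2+}$ is available for only one factor $R_{\theta_j}$ at a time, because the major arcs of $R_{\theta_1},\dots,R_{\theta_d}$ do not align; the other $d-1$ factors admit only the trivial bound $N$. Hence the best $L^\infty$ bound off the bad set is $N^{d-\frac12+}$, not the $N^{\frac d2+\eps}$ you posit for part (ii). This is precisely why the paper's absorption threshold is $\ld\gg N^{\frac d2-\frac14}$ in \emph{both} parts (see \eqref{C6} and \eqref{E4e}), and why the extra hypothesis $\ld\ges N^{\frac d2-\eps}$ in (ii) has to be spent elsewhere (in \eqref{E15}), not on the absorption step.

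Second, and more seriously, the bad part. You bound $\jb{|\RR_{\textup{bad}}|*\ind_{A_\ld},\ind_{A_\ld}}\les\|\RR_{\textup{bad}}\|_{L^{q/2}}|A_\ld|^{2-\frac2q}$ and then \emph{assume} $\|\RR_{\textup{bad}}\|_{L^{q/2}}^{q/2}\les N^{\frac d2 q-(d+2)}$. By the duality of Subsection \ref{SUBSEC:d2}, that kernel bound for $q/2$ down to $3+$ is essentially the scaling-sharp Strichartz estimate for all $p>6$, which is far stronger than anything proved (or provable by current methods) on an irrational torus --- the paper's own version of this argument, Proposition \ref{PROP:Str d2}, only reaches $q/2>6$ because \eqref{B5} fails below $p=6$, and restricting to major arcs does not rescue it since the factors $R_{\theta_j}$, $j\geq2$, are uncontrolled there. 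The paper avoids this entirely: in part (i) the bad piece $\RR_2$ is estimated through its \emph{Fourier transform}, $\|\ft\RR_2\|_{L^\infty}\les M^{-1+}$ (a cancellation estimate coming from Ramanujan sums, \eqref{C4}--\eqref{C5}), which pairs with $\|\Theta_\ld\|_{L^2}^2=|A_\ld|$ to produce a term \emph{linear} in $|A_\ld|$; the exponent $-6$ in $\ld$ then comes from optimizing the scale $M$ via $N^{d-1}M^{\frac12+}\sim\ld^2$, not from a $q=6$ endpoint of your Young-inequality scheme. Part (ii) requires still more: the F\'ej\'er-regularized kernel $K$ to remove logarithms, the full dyadic decomposition in $M$ and $s$ with the three regimes $\Ld_1,\Ld_2,\Ld_3$, the divisor-counting Lemma \ref{LEM:divisor}, and interpolation between $L^1\to L^\infty$ and $L^2\to L^2$ bounds, producing the mixed terms $|A_\ld|^{1+\frac1p}$ in \eqref{E10} (the constraint $q>6$ arises from the summability condition $\theta<\frac13$ there). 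None of this machinery is replaceable by a single H\"older/Young step, so as written the proposal reduces the proposition to an unproven (and in the stated range unavailable) kernel estimate.
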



We present the proof of Proposition \ref{PROP:level} in
Subsections \ref{SUBSEC:PROP1} and \ref{SUBSEC:PROP2}.
In the following, we
use
Proposition \ref{PROP:level}
to prove
Theorem \ref{THM:2} when $ d = 2$.
First, we present the proof of
Theorem \ref{THM:2}  (ii.a),
i.e.~we  prove \eqref{P2} for $p \geq   \frac{20}{3}$ when $ d= 2$.
Recall that  Catoire-Wang \cite{CW} proved
\begin{equation}
\|e^{-it \Dl} f\|_{L^4_{t, \bx}(I \times \T^d)}
\les N^\frac{1}{6} \|f\|_{L^2(\T^d)}
\label{C0a}
\end{equation}

\noi
for $f \in \T^2$ with $\supp \ft{f} \subset [-N, N]^2$.
Given $f$ as in \eqref{C0},
let $F(\bx, t) = e^{-it \Dl} f(\bx, t)$.
By Cauchy-Schwarz inequality, we have
$\|F\|_{L^\infty_{t, \bx}} \les N. $
Then, with Proposition \ref{PROP:level} (i) and \eqref{C0a},
we have
\begin{align*}
\int_{I \times \T^2 } |F(\bx, t)|^p d\bx dt
& \leq \int_{ N^{\frac34+\eps}\les |F| \les N}|F(\bx, t)|^p d\bx dt
+ N^{(\frac{3}{4}+\eps)(p - 4)}\int |F(\bx, t)|^4 d\bx dt\\
& \les N^{2-\frac{12}{1+6\eps}\eps} \int_{N^{\frac34+}}^N\ld^{p - 7+ \frac{24}{1+6\eps}\eps} d\ld
+ N^{(\frac{3}{4}+\eps)(p - 4)+ \frac 23}\\
& 
 \les N^{p - 4+},
\end{align*}

\noi
where the last inequality holds as long as $p \geq \frac{20}{3}$.
This proves Theorem \ref{THM:2} (ii.a).
By Proposition \ref{PROP:level} (i) and (ii),
 Theorem \ref{THM:2} (i.a)
follows in a similar manner.
We omit details.

\begin{remark} \rm
When $d = 2$,
Proposition \ref{PROP:level} (i) and (ii)
basically says that the level set estimates \eqref{C1} and \eqref{E0}
are sufficient in proving the Strichartz estimates \eqref{P1} and \eqref{P2}
for $p > 6$
as long as
$\ld$ is  {\it large}: $\ld \geq N^{\frac{1}{4}+}$.
Hence, an improvement on Theorem \ref{THM:2} when $ d= 2$ may be obtained
if we can improve the lower bound on $\ld$
in Proposition \ref{PROP:level} (i)
or the $L^4$-Strichartz estimate \eqref{C0a}.
\end{remark}

\begin{remark}\label{REM:level} \rm
%
In  \cite{Bo5}, Bourgain proved
\begin{equation}
\|e^{-it \Dl} f\|_{L^p_{t, \bx}(I \times \T^d)}
\les N^\eps \|f\|_{L^2(\T^d)}
\label{C0b}
\end{equation}

\noi
for $ p = \frac{2(d+1)}{d}$.
See Proposition 8 and the comment afterward in \cite{Bo5}.
%
%
Combining
Proposition \ref{PROP:level} (i)
and \eqref{C0b}, we obtain \eqref{P2}
only for $ p \geq \frac{2(3d+1)}{d}$.
When $d = 2$, the combination of
Proposition \ref{PROP:level} (i)
and \eqref{C0a} yields a better result.
When $d \geq 3$, 
Proposition \ref{PROP:Str d3}
yields better results.
We point out that,
when $d = 3$, combining
Proposition \ref{PROP:level} (i) with Theorem \ref{THM:1} (ii)
yields another proof of Theorem \ref{THM:2} (ii.b).
\end{remark}

%
%
%
%

%
%

\subsection{Proof of Proposition \ref{PROP:level} (i)} \label{SUBSEC:PROP1}

Let $\eta$ be a smooth cutoff function supported on $[\frac{1}{200}, \frac{1}{100}]$.
Given $q \in \mathbb{N}$,
define  $J_q$ by
\begin{equation}
 J_q = \{ a \in \mathbb{N}:\, 1\leq a \leq q, \, (a, q) = 1\}.
 \label{C1a}
\end{equation}

\noi
Then, for given $M \in \mathbb{N}$ with $M \geq N$, we define
\[ \Phi(t) = \sum_{M\leq q < 2M}\sum_{a \in J_q}
\eta\Big( q^2 \big\| t - \tfrac{a}{q}\big\|\Big),\]

\noi
where $\|x\| = \min_{n\in \Z}|x - n|$ denotes the distance
of $x$ to the closest integer.
Note that $\Phi$ is periodic with period 1.
By taking a Fourier transform, we have
\begin{equation}
 \ft \Phi(k) =
\sum_{M\leq q < 2M}\frac{1}{q^2}
c_q(k) \, \ft \eta(q^{-2} k),
\label{C2}
\end{equation}

\noi
where $c_q(k)$ denotes
Ramanujan's sum: $c_q(k) : =\sum_{a \in J_q} e^{-2\pi i \frac a qk}$.
Let $\phi(q)$ be  the Euler's totient  function defined by $\phi(q) = \sum_{a \in J_q} 1$.
Then, by Theorem 330 in \cite{HW}, we have
\begin{equation*}
 \ft \Phi(0) \sim
\frac{1}{M^2} \sum_{M\leq q < 2M} \phi(q) \sim 1.
\end{equation*}

\noi
Namely, $\ft\Phi(0)$ is independent of $M$.

Without loss of generality, assume that $I $ is centered at $0$.
With $Q(\bn)$ in \eqref{CQ},
define $\mathbf{R}$ as in \eqref{B4a},
where $S_N = \{ \bn \in \Z^d: \, |n_j| \leq N, \, j = 1, \dots, d \}$.
Then, we have
$\mathbf{R}(\bx, t) = R_1(x_1, t)\prod_{j = 2}^d R_{\theta_j}(x_j, t),$
where $R_\theta$ is defined in \eqref{B4b}.
Now,
letting $\chi$ be a smooth cutoff function support on $3I$
such that $\chi(t) \equiv 1$ on $2I$,
define $\mathbf{R}_1$ and $\mathbf{R}_2$
by
\begin{equation}
\mathbf{R}_1(\bx, t) =   \frac{\Phi(t)}{\ft \Phi(0)} \RR(\bx, t) \chi(t) \quad
\text{and} \quad
\RR_2 (\bx, t) = \RR (\bx, t)\chi(t) - \RR_1 (\bx, t) .
\label{C2a}
\end{equation}

\noi
Noting that the intervals
$ I_{\l, q, a}: = \big[ \l + \frac aq + \frac{1}{200 q^2}, \l+ \frac aq + \frac{1}{100 q^2}\big] $
are disjoint for distinct values of $\l, a$, and $q \sim M \gg 1$,
it follows from
Weyl's inequality \cite[Theorem 1 on p.~41]{M}
that
\begin{equation}
 |R_1(x_1, t) | \les
\frac{N}{q^\frac{1}{2}} + N^\frac{1}{2}(\log q)^\frac{1}{2} + q^\frac{1}{2}(\log q)^\frac{1}{2}
\les M^\frac{1}{2} (\log M)^\frac{1}{2}
\label{C2b}
\end{equation}

\noi
for $t \in I_{\l, q, a}$ since $q\sim M \geq N$.
Then, along with a trivial bound
$ |R_{\theta_j}(x_j, t) | \les N$,
we obtain
\begin{equation}
\|\RR_1\|_{L^\infty_{t, \bx}}
\les \min\big( N^{d-1} M^\frac{1}{2} (\log M)^\frac{1}{2}, N^d\big).
\label{C3}
\end{equation}

Next, we consider $\RR_2$.
By expanding $\Phi(t)$ in the Fourier series, we have
\begin{align}
\RR_2(\bx, t) = - \frac{1}{\ft \Phi(0)} \RR(\bx, t) \chi(t) \sum_{k \ne 0} \ft\Phi(k) e^{2\pi i k t}.
\label{C3a}
\end{align}

\noi
First, recall the following lemma
(Lemma 3.33 in \cite{Bo2}).
Given $M \in \mathbb{N}$
and $k \in \Z$, we have
\begin{equation}
\sum_{M\leq q < 2M} |c_q(k)| 
\les d(k, M)M^{1+},
\label{C4}
\end{equation}

\noi
where $d(k, M)$ denotes the number of divisors of $k$ less than $M$.
Then,
by taking a Fourier transform of \eqref{C3a} with \eqref{C2}, \eqref{C4},
and $d(k, M) \les k^{0+}$, we have
\begin{align}
|\ft \RR_2(\bn, \tau) |
& =  \bigg| \ind_{S_N}(\bn)\sum_{k\ne0} \frac{\ft \Phi(k)}{\ft \Phi(0)}
\, \ft \chi (\tau - Q(\bn) - k)\bigg|\notag \\
& \les
\frac{1}{M^2} \sum_{k\ne0}
\sum_{M\leq q < 2M}|c_q(k)|
\big|\ft \eta(q^{-2} k)
 \ft \chi (\tau - Q(\bn) - k)\big|\notag \\
 & \les
\frac{1}{M^2} \sum_{k\ne0}
k^{0+} M^{1+} \Big(\frac{M^2}{k}\Big)^{0+}
\frac{1}{\jb{\tau - Q(\bn) - k}^{10}}\notag \\
 & \les M^{-1 + }.
 \label{C5}
\end{align}

Define $\Theta_\ld(\bx, t)$ by
\begin{equation}
\Theta_\ld(\bx, t) = \exp\big(i \arg(e^{-it \Dl}f(\bx))\big) \cdot \ind_{A_\ld}(\bx, t).
\label{C5a}
\end{equation}

\noi
Note that
 $\supp \Theta_\ld(\bx, \cdot) \subset I$
 for each $\bx \in \T^d$.
Then, by Cauchy-Schwarz inequality with \eqref{C0}, we have
\begin{align}
\ld^2 |A_\ld|^2
& \leq \bigg(\int_{I\times \T^2} \big(e^{-it \Dl} f\big)(\bx, t) \cj{\Theta_\ld(\bx, t)} d\bx dt\bigg)^2
= \bigg(\sum_{\bn \in S_N} c_\bn \cj{\ft \Theta_\ld (\bn, Q(\bn))}\bigg)^2 \notag \\
& \leq \sum_{\bn \in S_N} \big| \ft \Theta_\ld (\bn, Q(\bn))\big|^2
= \big\langle \RR* \Theta_\ld, \Theta_\ld\big\rangle
= \big\langle (\RR\chi)* \Theta_\ld, \Theta_\ld\big\rangle \label{C5b} 
\intertext{By \eqref{C2a}, \eqref{C3}, and \eqref{C5}, we have}
& = \big\langle \RR_1* \Theta_\ld, \Theta_\ld\big\rangle
+ \big\langle \RR_2* \Theta_\ld, \Theta_\ld\big\rangle  \notag \\
&  \leq \|\RR_1\|_{L^\infty_{t, \bx}} \|\Theta_\ld\|_{L^1_{t, \bx}}^2
+ \|\ft \RR_2\|_{L^\infty_\tau \l^\infty_\bn } \|\Theta_\ld\|_{L^2_{t, \bx}}^2  \notag \\
& \leq C_1 N^{d-1}  M^{\frac{1}{2}+\eps_1} |A_\ld|^2
+ M^{-1+\eps_2} |A_\ld|
\label{C6}
\end{align}

\noi
for small $\eps_1, \eps_2 > 0$.

Now,  choose $M \geq N$ such that
\begin{equation}
N^{d-1} M^{\frac{1}{2}+\eps_1} \sim \ld^2.
\label{C7}
\end{equation}

\noi
The condition \eqref{C7} with $M \geq N$
implies that $\ld \ges N^{\frac{d}{2} - \frac{1}{4} + \frac{\eps_1}{2}}$.
Then,
\eqref{C6} yields
\begin{align*}
|A_\ld| \les \Big(\frac{N^{2(d-1)}}{\ld^4}\Big)^\frac{1 - \eps_2}{1+2\eps_1} \ld^{-2}
\les N^{2(d-1)\frac{1}{1+3\eps_1} } \ld^{-6+\frac{12}{1+3\eps_1}\eps_1} 
\end{align*}

\noi
by setting $\eps_2 = \eps_2(\eps_1)$ such that
$\frac{1 - \eps_2}{1+2\eps_1} = \frac{1}{1+3\eps_1}$.
This proves \eqref{C1} with
$\eps =  \frac{\eps_1}{2}$.

\subsection{Proof of Proposition \ref{PROP:level} (ii)}
\label{SUBSEC:PROP2}

In this subsection, we prove the level set estimate \eqref{E0},
which is sharp
for $ q > 6.$
The following argument is inspired by Bourgain's paper \cite{Bo2}.
We first  go over some basic setups,
restricting our attention to $t \in \T$.

Let $\{\s_n\}_{n\in \Z}$ be the multiplier defined by
$\s_n = 1$ on $[-N, N]$,
$\s_n = \frac{N-j}N$ for $n = N+j$ and $n = -N -j$, $j = 1, \dots, N$,
and
$\s_n = 0$ for $|n| \geq 2N$.
Consider
\begin{equation}
K(x, t) :=  \sum_{n\in\Z} \s_n e^{2\pi i (nx + n^2t)}.
\label{D0a}
\end{equation}

\noi
Then, we have the following lemma.
Here, $\|x\| = \min_{n\in \Z}|x - n|$ denotes the distance
of $x$ to the closest integer as before.

\begin{lemma}[Lemma 3.18 in \cite{Bo2}] \label{LEM:Weyl}
Let $1 \leq a \leq q \leq N$ and $(a, q) = 1$
such that
\begin{equation}
\bigg\| t - \frac aq\bigg\| \leq \frac{1}{qN}.
\label{D0}
\end{equation}

\noi
Then, we have
\begin{equation}
| K(x, t)|
\les \frac{N}{q^\frac{1}{2}\Big(1 + N\big\|t - \frac aq\big\|^\frac{1}{2}\Big)}.
\label{D1}
\end{equation}

\end{lemma}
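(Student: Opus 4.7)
\textbf{Proof plan for Lemma \ref{LEM:Weyl}.} The strategy is the classical combination of Dirichlet's decomposition, a complete quadratic Gauss sum, Poisson summation, and a Van der Corput estimate on the resulting continuous oscillatory integral. Write $t = a/q + \beta$ with $|\beta|\leq 1/(qN)$ and decompose $n = qm + r$ with $0\leq r < q$. Since $(qm+r)^2 a/q \equiv r^2 a/q \pmod{1}$, the phase factorizes and
\begin{equation*}
K(x,t) \;=\; \sum_{r=0}^{q-1} e^{2\pi i r^2 a/q}\, S_r(x,\beta), \qquad S_r(x,\beta) := \sum_{m\in\Z}\sigma_{qm+r}\, e^{2\pi i\bigl((qm+r)x + (qm+r)^2\beta\bigr)}.
\end{equation*}

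Next, I would apply Poisson summation in $m$ to $S_r$. After the change of variables $y = qs + r$, each Poisson mode produces the same oscillatory integral up to a twist:
\begin{equation*}
S_r(x,\beta) \;=\; \frac{1}{q}\sum_{k\in\Z} e^{2\pi i kr/q}\, I_k(x,\beta), \qquad I_k(x,\beta) := \int_\R \sigma(y)\, e^{2\pi i(y(x - k/q) + y^2\beta)}\, dy,
\end{equation*}
where $\sigma(y)$ denotes the continuous piecewise-linear extension of the multiplier $\sigma_n$. Interchanging the summations yields
\begin{equation*}
K(x,t) \;=\; \frac{1}{q}\sum_{k\in\Z} I_k(x,\beta)\,\mathcal{G}(a,k;q), \qquad \mathcal{G}(a,k;q) := \sum_{r=0}^{q-1} e^{2\pi i(r^2 a + kr)/q}.
\end{equation*}
Completing the square in $r$ (working in $\Z/q\Z$) expresses $\mathcal{G}(a,k;q)$ as a shifted complete quadratic Gauss sum, so $|\mathcal{G}(a,k;q)|\lesssim q^{1/2}$ uniformly in $k$ by the classical Gauss sum bound.

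The main analytic step is estimating $I_k$. For $k=0$, the phase $y(x) + y^2\beta$ has second derivative $4\pi\beta$, so Van der Corput's lemma combined with the trivial bound $\|\sigma\|_{L^1}\lesssim N$ gives
\begin{equation*}
|I_0(x,\beta)| \;\lesssim\; \min\bigl(N,\, |\beta|^{-1/2}\bigr) \;\lesssim\; \frac{N}{1 + N|\beta|^{1/2}}.
\end{equation*}
For $k\neq 0$, the additional oscillation $e^{-2\pi i ky/q}$ combined with the hypothesis $|\beta|\leq 1/(qN)$ forces the phase derivative $(x - k/q) + 2y\beta$ to remain comparable to $|k|/q$ on $\supp\sigma$ when $|k|$ is large, so integrating by parts twice against the piecewise-linear $\sigma$ (which has $\|\sigma'\|_{L^1}\lesssim 1$ and $\sigma''$ a sum of $O(1)$ Dirac masses of size $O(1/N)$) produces the decay $|I_k|\lesssim q^2/(Nk^2) \cdot (1+N|\beta|^{1/2})^{-1}$, making $\sum_{k\neq 0}|I_k|$ summable and of the same order as $|I_0|$. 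Combining the three ingredients gives
\begin{equation*}
|K(x,t)| \;\lesssim\; \frac{1}{q}\cdot q^{1/2}\cdot \frac{N}{1 + N|\beta|^{1/2}} \;=\; \frac{N}{q^{1/2}\bigl(1 + N\|t - a/q\|^{1/2}\bigr)},
\end{equation*}
which is precisely \eqref{D1}.

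The main obstacle will be the careful control of the tail Poisson modes $k\neq 0$: one must verify that the combination of the slow oscillation from $y^2\beta$ (handled by Van der Corput) and the fast oscillation from $y\,k/q$ (handled by integration by parts against the limited smoothness of $\sigma$) is uniformly summable in $k$ without degrading the main-term estimate. The Dirichlet hypothesis $|\beta|\leq 1/(qN)$ is crucial here, as it prevents the stationary point of the quadratic phase $y(x-k/q) + y^2\beta$ from entering $\supp\sigma$ for $k\neq 0$, which is what makes non-stationary integration by parts effective.
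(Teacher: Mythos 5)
The paper does not prove this lemma; it is quoted verbatim from Bourgain (Lemma 3.18 of \cite{Bo2}), so there is no in-paper argument to compare against. Your proposal reconstructs what is essentially the standard (and Bourgain's) proof: split $n=qm+r$, factor out the complete quadratic Gauss sum, apply Poisson summation in $m$, and estimate the resulting oscillatory integrals by van der Corput for the main mode and by non-stationary integration by parts for the tail. The skeleton is correct and the bookkeeping $q^{-1}\cdot q^{1/2}\cdot\min(N,|\beta|^{-1/2})$ does give \eqref{D1}.

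Two points need repair. First, your closing claim that the hypothesis $|\beta|\le \frac{1}{qN}$ ``prevents the stationary point from entering $\supp\sigma$ for $k\ne 0$'' is false as stated: the stationary point $y_*=-(x-k/q)/(2\beta)$ lies in $[-2N,2N]$ precisely when $|x-k/q|\le 4N|\beta|\le 4/q$, i.e.\ when $|qx-k|\le 4$, and depending on $x$ this can happen for several nonzero $k$. What the hypothesis actually buys is that only $O(1)$ values of $k$ are ``near-stationary''; for those you must apply van der Corput exactly as for $k=0$ (contributing $O(1)\cdot\min(N,|\beta|^{-1/2})$, which is harmless), and reserve the integration-by-parts decay $|I_k|\lesssim q^2/(N k^2)+q/k^2$ for the remaining $k$ with $|qx-k|\gg 1$; one then checks $\sum_k' |I_k|\lesssim q\le (qN)^{1/2}\le \min(N,|\beta|^{-1/2})$. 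Second, the uniform bound $|\mathcal{G}(a,k;q)|\lesssim q^{1/2}$ cannot be obtained by completing the square alone when $q$ is even (where $2^{-1}$ does not exist mod $q$); you need the classical evaluation that the twisted complete quadratic Gauss sum has modulus either $0$, $\sqrt{q}$, or $\sqrt{2q}$, which of course still suffices. With these two corrections the argument is complete.
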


\noi
Note that
the multiplier $\s_n$ in \eqref{D0a} avoids
the logarithmic loss (when $q \sim N$) in Weyl's inequality \eqref{C2b}
on the Weyl sum $W_N(x, t) = \sum_{|n|\leq N} e^{2\pi i (nx + n^2t)}$.
Indeed, by writing $K = \frac{1}{N} \sum_{k = N}^{2N-1} W_k$,
we see that this regularizing effect in \eqref{D1} is analogous
to that of the F\'ej\'er kernel over the Dirichlet kernel.

In the following, we fix  $N\gg1$,  dyadic.
For dyadic $M \leq N$, let $\mathcal{R}_M$ by
\[ \mathcal{R}_M = \Big\{ \frac aq: \, a\in J_q, \, M\leq q < 2M\Big\},\]

\noi
where $J_q$ is as in \eqref{C1a}.
Let $\psi(t)$ be a smooth cutoff function
supported on $\frac{1}{2}+\frac{1}{10} \leq |t| \leq 2 - \frac{1}{10}$
such that $\sum_{j \in \Z} \psi(2^{-j} t) = 1$ for $t \ne 0$.
For $s \in \mathbb{N}$ with $ M \leq 2^s < N$
let $\omega_{N, 2^s} (t)= \psi(2^sN t)$
and define $\omega_{N, N}$ by
\[ \o_{N, N}(t) = \begin{cases}
\sum_{j \geq \log_2 N} \psi(2^jNt), & t\ne 0,\\
1, & t = 0.
\end{cases}
\]

\noi
Note that we have $\supp(\o_{N, 2^s}) \subset \big\{|t| \les \frac{1}{2^sN}\big\}$ and
\begin{equation}
|\ft \o_{N, 2^s}(k) | \les
\frac{1}{2^s N} \Big\langle\frac{ k}{2^sN}\Big\rangle^{-100}.
\label{D2a}
\end{equation}

\noi
Now, let
\begin{equation}
\Omega_{M, N} = \sum_{M \leq 2^s \leq N} \omega_{N, 2^s}.
\label{D2}
\end{equation}

\noi
Then,
it follows that $\O_{M, N} \equiv 1$ on $[ -\frac{1}{MN}, \frac{1}{MN}]$
and $\supp\O_{M, N} \subset [ -\frac{2}{MN}, \frac{2}{MN}]$.

Let $N_1 = \frac{1}{100}N$.
Note that, for $M_1 < M_2 \leq N_1$, we have
\[\bigg(\mathcal{R}_{M_1} + \Big[ -\frac{2}{M_1N}, \frac{2}{M_1N}\Big]\bigg)
\cap
\bigg(\mathcal{R}_{M_2} + \Big[ -\frac{2}{M_2N}, \frac{2}{M_2N}\Big] \bigg)= \emptyset.\]

\noi
Recall that by Dirichlet's theorem \cite[Lemma 2.1]{V},
\eqref{D0} is satisfied for all $t \in \T = [0, 1]$.
Then, by letting $\dl_T$ denote the Dirac delta measure at $T$, we have
\begin{equation}\label{D3}
1 = \sum_{\substack{M \leq N_1\\ M, \text{ dyadic}}} 
\sum_{T \in \mathcal{R}_M}\dl_T* \O_{M, N} + \rho,
\end{equation}

\noi
such that
$\rho(t) \ne 0$ for some $t \in \T$ implies that $t$ satisfies \eqref{D0}
for some $q > N_1$.
In particular, by Lemma \ref{LEM:Weyl}, we have
\begin{equation}
| \rho(t) K(x, t) | \les N^\frac{1}{2}.
\label{D4}
\end{equation}

\noi
From \eqref{D3} with \eqref{C4}, \eqref{D2a}, and \eqref{D2},
we have
\begin{align}
|\ft \rho(k)|
& \leq
\bigg| \sum_{\substack{M \leq N_1\\ M, \text{ dyadic}}} \sum_{M \leq 2^s \leq N}
\F \bigg[\sum_{T \in \mathcal{R}_M}\dl_T\bigg] (k)\cdot \ft \o_{N, 2^s}(k) \bigg| \notag \\
& \les \sum_{\substack{M \leq N_1\\ M, \text{ dyadic}}}
\sum_{M \leq 2^s \leq N}
\frac{d(k, M)M^{1+}}{2^s N} \Big\langle\frac{ k}{2^sN}\Big\rangle^{-100}
\les  N^{-1+}
\label{D4b}
\end{align}

\noi
for $k \ne 0$.
Here, we used the fact that $d(k, M) \les k^{0+}$
and $M \leq 2^s \leq N$.

Now, for each $M$ and $s$, we choose a coefficient $\al_{M, s}$ such that
\begin{equation}
\F\bigg[\sum_{T \in \mathcal{R}_M} \dl_T * \o_{N, 2^s}\bigg](0)
 =  \al_{M, s} \ft \rho(0).
\label{D4a}
 \end{equation}

\noi
Then, from  \cite[(3.56)]{Bo2}, we have
\begin{equation}
|\al_{M, s}| \les \frac{M^2}{2^sN}.
\label{D5}
\end{equation}

Now, we focus on our problem.
Namely, we do not assume $t \in \T$ any longer.
Given an interval $I \subset \R$,
assume that $I $ is centered at $0$
and let  $\chi$ be a smooth cutoff function support on $3I$
such that $\chi(t) \equiv 1$ on $2I$ as before.
We define
\begin{align}
\K(\bx, t) & = \chi(t) \sum_{\bn \in \wt S_N} \s_{n_1}
e^{2\pi i (\bn\cdot \bx + Q(\bn) t)} \notag \\
& = \chi(t) K(x_1, t) \prod_{j = 2}^d \sum_{|n_j|\leq N} e^{2\pi i (n_j x_j + \theta_j n_j^2 t)} ,
\label{D6}
\end{align}

\noi
where
$\wt S_N = \{ \bn \in \Z^d: \, |n_j| \leq N, j = 2, \dots, N\}$
and $K(x_1, t)$ is as in \eqref{D0a}.

Define $\Ld_{M, s}$ by
\begin{equation}
\Ld_{M, s}(\bx, t) = \K(\bx, t)
\bigg[\sum_{T \in \mathcal{R}_M} \dl_T * \o_{N, 2^s} (t) - \al_{M, s} \rho(t)\bigg] .
\label{D7}
\end{equation}

\noi
Then, from Lemma \ref{LEM:Weyl}, \eqref{D4}, and \eqref{D5} with $M \leq 2^s \leq N$, we have
\begin{align}
|\Ld_{M, s}(\bx, t)|\les N^{d-1} \frac{N}{M^\frac{1}{2} \big( 1 + (2^{-s} N)^\frac{1}{2}\big)}
+ \frac{M^2}{2^sN} N^{d-\frac{1}{2}}
\les N^{d-1} \Big(\frac{2^s N }{M}\Big)^\frac{1}{2}.
\label{D8}
\end{align}

\noi
Hence, from \eqref{D8}, we have
\begin{equation}
\| f * \Ld_{M, s} \|_{L^\infty(I \times \T^d)}
\les N^{d-1} \Big(\frac{2^s N }{M}\Big)^\frac{1}{2}\|f\|_{L^1(I \times \T^d)}.
\label{D9}
\end{equation}

Next, we estimate $|\ft \Ld_{M, s}|$.
Denote the second factor in \eqref{D7} by $\Phi_{M, s}$,
i.e.~let
\begin{equation}
\Phi_{M, s}(t) = \sum_{T \in \mathcal{R}_M} \dl_T * \o_{N, 2^s} (t) - \al_{M, s} \rho(t).
\label{D9a}
\end{equation}

\noi
Note that $\Phi_{M, s}$ is periodic.
Moreover, by  \eqref{D4a}, we have $\ft \Phi_{M, s}(0) = 0$.
Hence, we have
\begin{equation}
\ft \Ld_{M, s}(\bn, \tau) = \s_{n_1}
\bigg(\prod_{j = 2}^d \ind_{|n_j| \leq N}\bigg)
\sum_{k \ne 0}\ft  \Phi_{M, s}(k) \ft \chi(\tau - Q(\bn) - k).
\label{D10}
\end{equation}

%
%
\noi
By \eqref{C4}, \eqref{D2a}, \eqref{D4b},  and \eqref{D5}
with   $d(k, M) \les k^{0+}$
and $M \leq 2^s \leq N$,
 we have
\begin{align}
|\ft \Phi_{M, s}(k)|
\les \frac{d(k, M) M^{1+}}{2^s N} \Big\langle\frac{ k}{2^sN}\Big\rangle^{-100}
+ \frac{M^2}{2^s N^{2-}}
\les
\frac{M}{2^s N^{1-}}
\label{D11}
\end{align}

\noi
for $k \ne 0$.
By summing
$|\ft \chi(\tau - Q(\bn) - k)| \les \jb{\tau - Q(\bn) - k}^{-100}$
over $k \ne 0$,
it follows from \eqref{D10} and \eqref{D11} that
\begin{align}
|\ft \Ld_{M, s}(\bn, \tau)|
\les
\frac{M}{2^s N^{1-}}.
\label{D12}
\end{align}


\noi
Hence, from \eqref{D12}, we have
\begin{equation}
\| f * \Ld_{M, s} \|_{L^2(I \times \T^d)}
\les
\frac{M}{2^s N}N^{0+}
\|f\|_{L^2(I \times \T^d)}.
\label{D13}
\end{equation}

\noi
Also, with the trivial bound $d(k, M) \leq M$ in \eqref{D11}, we have
\begin{equation}
\| f * \Ld_{M, s} \|_{L^2(I \times \T^d)}
\les
\frac{M^{2+}}{2^s N}
\|f\|_{L^2(I \times \T^d)}.
\label{D13a}
\end{equation}

\noi
The second estimate \eqref{D13a} is useful when $M \ll N^\eps$.

\medskip

In the following, we establish another estimate  on
$\| f * \Ld_{M, s} \|_{L^2(I \times \T^d)}$,
using
the following lemma from Bourgain \cite{Bo2}.
\begin{lemma}[Lemma 3.47 in \cite{Bo2}] \label{LEM:divisor}
Let $d(k, M)$ denote the number of divisors of $k$ less than $M$.
Then, for any $\be, B, D > 0$, we have
\begin{equation}\label{D14}
\# \{ 0 \leq k \leq N: \, d(k, M) > D\} < c_{\be, B} (D^{-B} M^\be N + M^B).
\end{equation}

\noi
Note that the constant in \eqref{D14} is independent of $D>0$,
$M, N \in \mathbb{N}$.

\end{lemma}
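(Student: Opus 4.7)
The plan is a standard moment (Chebyshev) argument, reducing the count to a divisor sum. Without loss of generality I take $B$ to be a positive integer, since the real case follows by replacing $B$ with $\lceil B\rceil$ at the cost of the constant. By Chebyshev's inequality,
\begin{equation*}
D^{B} \cdot \#\{0 \leq k \leq N : d(k, M) > D\} \leq \sum_{0 \leq k \leq N} d(k, M)^{B},
\end{equation*}
so it is enough to bound the right-hand side by a constant times $M^\beta N + D^{B} M^B$.

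Now I would expand $d(k,M)^B$ as the number of ordered $B$-tuples $(d_1, \ldots, d_B)$ of divisors of $k$ with each $d_j < M$, and swap the order of summation to get
\begin{equation*}
\sum_{0 \leq k \leq N} d(k, M)^{B} = \sum_{d_1, \ldots, d_B < M} \#\bigl\{0 \leq k \leq N : L_{\mathbf{d}} \mid k\bigr\},
\end{equation*}
where $L_{\mathbf{d}} := \mathrm{lcm}(d_1, \ldots, d_B)$. The inner count is at most $N/L_{\mathbf{d}} + 1$; the contribution of the $+1$ term is bounded by the total number of $B$-tuples, namely $M^B$, which matches the second term in \eqref{D14} (after division by $D^B$, using the trivial case $D \geq 1$; for $D < 1$ the claimed bound is itself trivial). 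What remains is to estimate
\begin{equation*}
S_B := \sum_{d_1, \ldots, d_B < M} \frac{1}{\mathrm{lcm}(d_1, \ldots, d_B)}.
\end{equation*}

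For $S_B$ I would group the $B$-tuples by the value of $L = L_{\mathbf{d}}$. Since each $d_j$ divides $L$, the number of tuples with $\mathrm{lcm}(d_1,\ldots,d_B) = L$ is at most $\tau(L)^B$, where $\tau(L)$ is the total number of divisors of $L$; and clearly $L \leq d_1 \cdots d_B \leq M^B$. Hence
\begin{equation*}
S_B \leq \sum_{L \leq M^B} \frac{\tau(L)^B}{L}.
\end{equation*}
Applying the classical divisor bound $\tau(L) \ll_\eta L^\eta$ for any $\eta > 0$ gives $S_B \ll_\eta \sum_{L \leq M^B} L^{B\eta - 1} \ll_\eta M^{B^2 \eta}$ provided $B\eta < 1$, and choosing $\eta = \beta/B^2$ produces $S_B \leq c_{\beta, B} M^\beta$, which closes the argument.

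The only nontrivial input is the classical divisor bound $\tau(L) \ll_\eta L^\eta$; with that in hand the proof is essentially mechanical, combining Chebyshev's inequality, interchange of summation, and the elementary observation $\mathrm{lcm}(d_1,\ldots,d_B) \leq d_1 \cdots d_B$. The main subtlety is simply making the right choice of moment exponent $B$ and auxiliary parameter $\eta$ to convert the polylogarithmic divisor-sum loss into the arbitrarily small power $M^\beta$ demanded by the statement.
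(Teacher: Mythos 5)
The paper does not prove this lemma at all --- it is quoted verbatim from Bourgain \cite[Lemma 3.47]{Bo2} --- so there is no internal proof to compare against; your argument is the standard moment-method proof and, as far as I can tell, the same one Bourgain uses: Chebyshev, expansion of the $B$-th moment as a sum over $B$-tuples of divisors, the bound $\#\{k\le N:\,L\mid k\}\le N/L+1$, and the key estimate $\sum_{d_1,\dots,d_B<M}\mathrm{lcm}(d_1,\dots,d_B)^{-1}\le c_{\be,B}M^{\be}$ via grouping by the value of the lcm and the divisor bound $\tau(L)\ll_\eta L^\eta$. All of these steps check out (note that the condition $B\eta<1$ in your estimate of $S_B$ is not actually needed, since $\sum_{L\le X}L^{a-1}\ll_a X^a$ for every $a>0$). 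The one step stated too loosely is the opening reduction ``WLOG $B\in\mathbb{N}$'': replacing $B$ by $\lceil B\rceil$ turns the second term into $M^{\lceil B\rceil}$, which is \emph{not} bounded by $c\,M^B$ uniformly in $M$, so the real case does not literally ``follow at the cost of the constant.'' The clean fix is to keep the Chebyshev exponent equal to the given real $B$, majorize $d(k,M)^B\le d(k,M)^{\lceil B\rceil}$ for $k\ge 1$ (where $d(k,M)\ge1$), and observe that for $k\ge1$ one has $\#\{1\le k\le N:\,L\mid k\}=\lfloor N/L\rfloor\le N/L$ with no $+1$, so the $M^{\lceil B\rceil}$ term never arises; only $k=0$ needs separate treatment, and it contributes at most $1\le M^B$ to the count. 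With that adjustment the proof is complete for all $\be,B,D>0$.
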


\noi
From  \eqref{D10} and \eqref{D11}, we have
\begin{align}
\| f * \Ld_{M, s} \|_{L^2(I \times \T^d)}
& \les \frac{ M^{1+}}{2^s N}
\Bigg(\int
\sum_{\wt S_N} \s_{n_1}^2 |\ft f(\bn , \tau)|^2
\bigg[\sum_{k \ne 0}\frac{d(k, M)}{\jb{\frac{ k}{2^sN}}^{100}\jb{\tau - Q(\bn) - k}^{100}} \bigg]^2
 d\tau \Bigg)^\frac{1}{2}\notag \\
& \hphantom{XXX} + \frac{M^2}{2^s N^{2-}} \|f\|_{L^2(I\times \T^d)},
 \label{D15}
\end{align}

\noi
where $\wt S_N$ is as in \eqref{D6}.
Given $D > 0$ (to be chosen later),
separate the first term, depending on $d(k, M) \leq D$ or $> D$.
The contribution from $d(k, M) \leq D$ can be estimated by
\begin{equation}
 \les  \frac{ D M^{1+}}{2^s N} \|f\|_{L^2(I\times \T^d)}.
 \label{D16}
\end{equation}

\noi
Next, we estimate the contribution from $d(k, M) > D$.
By Cauchy-Schwarz inequality, we have
\begin{align}
\bigg[\sum_{k \ne 0} & \frac{d(k, M)}{\jb{\frac{ k}{2^sN}}^{100}\jb{\tau - Q(\bn) - k}^{100}} \bigg]^2
\notag \\
& \leq
\bigg(\sum_{k \ne 0}\frac{d(k, M)^2}{\jb{\frac{ k}{2^sN}}^{200}\jb{\tau - Q(\bn) - k}^{100}} \bigg)
\bigg(\sum_{\wt k \ne 0}\frac{1}{\jb{\tau - Q(\bn) - \wt k}^{100}} \bigg)
\notag \\
& \les
\bigg(\sum_{k \ne 0}\frac{d(k, M)^2}{\jb{\frac{ k}{2^sN}}^{200}\jb{\tau - Q(\bn) - k}^{100}} \bigg).
\label{D15a}
\end{align}

\noi
Now, we estimate the first term on the right-hand side of \eqref{D15}
after applying \eqref{D15a}.
By 
first integrating in $\tau$, then summing over
$|n_j| \les N$ for $j = 1, \dots, d$,
applying Lemma \ref{LEM:divisor}
(with $2B$ and $2\beta$ instead of $B$ and $\beta$),
the trivial bound  $d(k, M) \leq M$,
and Hausdorff-Young's inequality,
we have
\begin{align}
& \les
 \frac{ M^{1+}N^{\frac{d}{2}}}{2^s N}
\bigg(
\sum_{|k| \les 2^s N}
d(k, M)^2 +
\sum_{j = 1}^\infty
\sum_{|k| \sim 2^{s+j} N}d(k, M)^2\Big\langle\frac{ k}{2^sN}\Big\rangle^{-200}\bigg)^\frac{1}{2}
\|\ft f\|_{L^\infty_\tau \l^\infty_\bn}
\notag \\
& \les
 \frac{ M^{2+}N^{\frac{d}{2}}}{2^s N}  \bigg(
\sum_{j = 0}^\infty 2^{-200j} (D^{-2B} M^{2\be} 2^{s+j} N + M^{2B}) \bigg)^\frac{1}{2}
\|f\|_{L^1(I\times \T^d)}
\notag  \\
& \les
N^\frac{d}{2}\bigg(
\frac{D^{-B} M^{2+\be +} }{2^{\frac{s}{2}}N^\frac{1}{2}}
+  \frac{ M^{2+B+}}{2^sN } \bigg)
\|f\|_{L^1(I\times \T^d)}.
\label{D17}
\end{align}

\noi
Hence, from \eqref{D15}, \eqref{D16}, and \eqref{D17}
with $M \leq N$, we have
\begin{align}
\| f * \Ld_{M, s} \|_{L^2(I \times \T^d)}
& \les
\frac{ D M^{1+}}{2^s N} 
\|f\|_{L^2(I\times \T^d)} \notag \\
& \hphantom{XXX}
+
N^\frac{d}{2} \bigg(
\frac{D^{-B} M^{2+\be +} }{2^{\frac{s}{2}}N^\frac{1}{2}}
+  \frac{ M^{2+B+}}{2^sN } \bigg)
\|f\|_{L^1(I\times \T^d)}.
\label{D18}
\end{align}


Define $\Ld$ by
\begin{equation}
\Ld (\bx, t)
 = \sum_{\substack{M \leq N_1\\ M, \text{ dyadic}}}
\sum_{M \leq 2^s \leq N} \Ld_{M, s}(\bx, t),
\label{E1}
\end{equation}

\noi
where $\Ld_{M, s}$ is as in \eqref{D7}.
By \eqref{D3}, we have
\begin{equation}
\big(\K -  \Ld \big)(\bx, t)
= \bigg[ 1+
\sum_{\substack{M \leq N_1\\ M, \text{ dyadic}}}
\sum_{M \leq 2^s \leq N}\al_{M, s} \bigg] \K(\bx, t) \rho(t).
\label{E2}
\end{equation}

\noi
Then, by \eqref{D4}, \eqref{D5}, and \eqref{D6} with $N_1 = \frac{1}{100}N$, we have
\begin{equation}
\|\K - \Ld\|_{L^\infty(I\times\T^d)}
\les N^{d-\frac{1}{2}}.
\label{E3}
\end{equation}

\noi
Hence, we have
\begin{align}
|\jb{f, f*(\K - \Ld)}|
\les  N^{d-\frac{1}{2}} \|f\|_{L^1}^2. 
\label{E4}
\end{align}

Let $p \in (1, 2)$ such that
\begin{equation}
\frac 1p = \frac{1-\theta}{1}+\frac \theta 2
\label{E4a}
\end{equation}

\noi
for some $\theta \in (0, 1)$.
Note that $p' \theta = 2$.

\medskip

\noi
{\bf Case (i):}   $\theta < \frac{1}{5}$.

By interpolating \eqref{D9} and \eqref{D13a}, we have
\begin{align}
\| f * \Ld_{M, s} \|_{L^{p'}(I \times \T^d)}
\les N^{(d-1)(1-\theta)} (2^s N)^{\frac12 - \frac 32\theta}
M^{-\frac 12 + (\frac 52+)\theta}
\| f \|_{L^{p}(I \times \T^d)}.
\label{E4b}
\end{align}

\noi
Then, summing over dyadic $M \geq 1$ and $s$ with $2^s \leq N$,
we have
\begin{align}
\| f *\Ld  \|_{L^{p'}(I \times \T^d)}
& \les N^{d - (d+2)\theta}  
\| f \|_{L^{p}(I \times \T^d)},
\label{E4c}
\end{align}

\noi
as long as $\theta > 0$ satisfies $-\frac 12 + (\frac 52+)\theta \leq 0$,
i.e.
\begin{equation}
\theta < \frac 15.
\label{E4d}
\end{equation}

In the following, we prove  the level set estimate \eqref{E0}
for $ q > 10$.
Given $f$ as in \eqref{C0}, let $F(\bx, t) = e^{-it \Dl} f(\bx, t)$.
Let $\Theta_\ld(\bx, t)$ be as in \eqref{C5a},
where
$A_\ld = \{(\bx, t) \in \T^d \times I:\, |F(\bx, t)| > \ld\}$.
Then, proceeding as in \eqref{C5b}
with \eqref{E4}
and \eqref{E4c}, we have
\begin{align}
\ld^2|A_\ld|^2
& \leq \sum_{\bn \in S_N} \big|\ft \Theta_\ld(\bn, Q(\bn))\big|^2
\leq \jb{ \K* \Theta_\ld, \Theta_\ld}\notag \\
& \leq
|\jb{ (\K - \Ld) * \Theta_\ld, \Theta_\ld  }|+ |\jb{ \Ld*\Theta_\ld, \Theta_\ld }|\notag \\
& \les N^{d-\frac{1}{2}} |A_\ld|^2 
+  N^{d - (d+2) \theta} |A_\ld|^\frac{2}{p}.
\label{E4e}
\end{align}

\noi
For $\ld \gg N^{\frac{d}{2}-\frac 14}$,
\eqref{E4e} reduces to
\begin{align}
\ld^2|A_\ld|^2
& \les
 N^{d - (d+2) \theta} |A_\ld|^\frac{2}{p}.
\label{E4f}
\end{align}

\noi
Noting that $p' \theta = 2$ by \eqref{E4a},
it follows from \eqref{E4f} that
\begin{align*}
|A_\ld| \les
N^{\frac{d}{2}q - (d+2) } \ld^{-q},
\end{align*}

\noi
where $q: = p' = \frac 2\theta > 10$.
Note that we only needed to assume
$\ld \gg N^{\frac{d}{2}-\frac 14}$
and did not need the condition $\ld \ges N^{\frac d2 - \eps}$ in this case.

\medskip

\noi
{\bf Case (ii):} $\frac 15 \leq \theta <  \frac{1}{3}$.

Let $M_j$, $j = 1, 2$ be dyadic numbers such that
\begin{equation}
M_1 \sim \bigg(\frac{N^\frac{d}{2}}{\ld}\bigg)^{\dl_1}
\les N^{\eps \dl_1}
\quad \text{and} \quad M_2 \sim N^{\dl_2}.
\label{EE0}
\end{equation}

\noi
Here, we choose $\dl_1, \dl_2>0$
such that $M_1 \ll M_2$.
We divide $\Ld$ into three pieces:
$\Ld = \Ld_1 + \Ld_2 + \Ld_3$
by setting
\begin{equation}
\Ld_j = \sum_{\substack{M \in I_j \\M, \text{ dyadic}}}
\sum_{M \leq 2^s \leq N} \Ld_{M, s},
\label{EE0a}
\end{equation}

\noi
where
$I_1 = [1, M_1]$, $I_2 = (M_1, M_2]$, and $I_3 = (M_2, N_1]$
with $N_1 = \frac{1}{100}N$ as before.

Then, summing \eqref{E4b} over dyadic $M \leq M_1$
and $2^s \leq N$, we have
\begin{align}
\| f *\Ld_1  \|_{L^{p'}(I \times \T^d)}
& \les M_1^{-\frac{1}{2} + (\frac 52 +)\theta} N^{d - (d+2)\theta}
\| f \|_{L^{p}(I \times \T^d)},
\label{EE1}
\end{align}

\noi
since $\theta \in [\frac 15, \frac 13)$.

Similarly, by interpolating \eqref{D9} and \eqref{D18},
we have
\begin{align}
\| f * & \Ld_{M, s} \|_{L^{p'}(I \times \T^d)}
 \les N^{(d-1)(1-\theta)} (2^s N)^{\frac12 - \frac 32\theta}
M^{-\frac{1}{2}+ (\frac{3}{2}+)\theta}
D^\theta  \| f \|_{L^{p}(I \times \T^d)}\notag \\
& 
+ N^{(d-1)(1-\theta)} \bigg(\frac{2^s N}{M}\bigg)^{\frac12 (1- \theta)}
N^{\frac{d}{2} \theta}
\bigg(\frac{D^{-B}M^{2+\be+}}{2^\frac{s}{2} N^\frac{1}{2}}
+  \frac{ M^{2+B+}}{2^sN } \bigg)^\theta
\|f\|_{L^1(I\times \T^d)}.
\label{E5}
\end{align}

\noi
Now, choose $D\sim M^\al$ for some small $\al > 0$.
Then, set
$\be \ll 1$
and $ B \gg 1$ such that
\begin{equation}
\s: =- \frac{5}{2} - \beta + \al B - > 0.
\label{E5a}
\end{equation}

\noi
Then, summing \eqref{E5} over dyadic $M \in (M_1, M_2]$ and $s$
with $2^s \leq N$, we have
\begin{align}
\| f * \Ld_2   \|_{L^{p'}(I \times \T^d)}
&  \les
N^{d - (d+2) \theta}
 \| f \|_{L^{p}(I \times \T^d)} \notag \\
&
+ \Big( M_1^{-\frac{1}{2} - \s \theta} N^{d - (\frac d 2+1)\theta}
+
M_2^{-\frac{1}{2} + (\frac{5}{2}+  B+) \theta} N^{d - (\frac d 2+2) \theta} \Big)
\|f\|_{L^1(I\times \T^d)},
\label{E6}
\end{align}

\noi
as long as $-\frac{1}{2} + (\frac{3}{2}+\al+)\theta \leq 0$,
i.e.~
\begin{equation}
\theta < \frac{1}{3+2\al +}.
\label{E6a}
\end{equation}

\noi
Note that \eqref{E6a} can be satisfied
as long as $\theta < \frac{1}{3}$
by choosing $\al$ sufficiently small.

%

Lastly, from \eqref{D9} and \eqref{D13}, we have
\begin{align}
\| f * \Ld_{M, s} \|_{L^{p'}(I \times \T^d)}
\les N^{(d-1)(1-\theta)} \bigg(\frac{2^s N}{M}\bigg)^{\frac12 - \frac 32\theta}
 N^{(0+) \theta}
\| f \|_{L^{p}(I \times \T^d)}.
\label{E7}
\end{align}

\noi
Then, summing over $M \geq M_2 \sim N^{\dl_2}$ and $s$ with $2^s \leq N$,
we have
\begin{align}
\| f * \Ld_3   \|_{L^{p'}(I \times \T^d)}
& \les N^{(d-1)(1-\theta)} \bigg(\frac{N^2}{M_2}\bigg)^{\frac12 - \frac 32\theta} N^{(0+) \theta}
\| f \|_{L^{p}(I \times \T^d)}\notag \\
& \les N^{d - (d+2)\theta}  
\| f \|_{L^{p}(I \times \T^d)},
\label{E8}
\end{align}

\noi
as long as $\theta < \frac 13$.

Putting \eqref{EE1}, \eqref{E6},  and \eqref{E8} together
with \eqref{E4d}, we obtain
\begin{align}
\| f * \Ld \|_{L^{p'}(I \times \T^d)}
& \les  M_1^{-\frac{1}{2} + (\frac 52 +)\theta}
N^{d - (d+2) \theta}
 \| f \|_{L^{p}(I \times \T^d)} \notag \\
&
+ \Big(   M_1^{- \frac 12 -\s \theta} N^{d - (\frac d 2+1)\theta}
+
M_2^{ -\frac{1}{2}+(\frac{5}{2}+B+) \theta} N^{d - (\frac d 2+2) \theta} \Big)
\|f\|_{L^1(I\times \T^d)}.
\label{E9}
\end{align}

\medskip
Now, we are ready to prove the level set estimate \eqref{E0} for $ q > 6$.
Then, proceeding as in Case (i)
with \eqref{E4}
and \eqref{E9}, we have
\begin{align}
\ld^2|A_\ld|^2
& \leq
|\jb{ (\K - \Ld) * \Theta_\ld, \Theta_\ld  }|+ |\jb{ \Ld*\Theta_\ld, \Theta_\ld }|\notag \\
& \les N^{d-\frac{1}{2}} |A_\ld|^2
+  M_1^{-\frac{1}{2} + (\frac 52 +)\theta} N^{d - (d+2) \theta} |A_\ld|^\frac{2}{p} \notag \\
& \hphantom{XXX}
+ M_1^{- \frac 12- \s \theta} N^{d - (\frac{d}{2}+1) \theta}|A_\ld|^{1+\frac{1}{p}}
+ M_2^{ -\frac{1}{2}+(\frac{5}{2}+B+) \theta} N^{d - (\frac d 2+2) \theta}
|A_\ld|^{1+\frac{1}{p}}.
\label{E10}
\end{align}

\noi
Since $\ld \gg N^{\frac{d}{2}-\frac 14}$,
\eqref{E10} reduces to
\begin{align}
\ld^2|A_\ld|^2
& \les
  M_1^{-\frac{1}{2} + (\frac 52 +)\theta} N^{d - (d+2) \theta} |A_\ld|^\frac{2}{p}
+ M_1^{- \frac 12- \s \theta} N^{d - (\frac{d}{2}+1) \theta}|A_\ld|^{1+\frac{1}{p}}\notag \\
& \hphantom{XXX}
+ M_2^{ -\frac{1}{2}+(\frac{5}{2}+B+) \theta} N^{d - (\frac d 2+2) \theta}
|A_\ld|^{1+\frac{1}{p}} \notag \\
%
& = : \I + \II + \III.
\label{E11}
\end{align}

First, suppose that $\ld^2|A_\ld|^2 \les \I$ holds.
Recall from \eqref{E4a} that $p' \theta = 2$.
Then, with
\eqref{EE0},
we have
\begin{align}
|A_\ld| \les
\bigg(\frac{N^\frac{d}{2}}{\ld}\bigg)^{(-\frac{p'}{4}+\frac{5}{2}+)\dl_1}
N^{\frac{d}{2}p' - (d+2) } \ld^{-p'}
\les
N^{\frac{d}{2}q - (d+2) } \ld^{-q}
\label{E12}
\end{align}

\noi
for $ q > p'$ by choosing $\dl_1 = \dl_1(q, p')$ sufficiently small.

Next, suppose that
$\ld^2|A_\ld|^2 \les \II$ holds.
Then,  from  \eqref{EE0},
we have
\begin{align}
|A_\ld|
& \les
N^{\frac{d}{2}p' - (d+2) } \ld^{-p'}
\Big(N^{\frac{d}{2}p'} \ld^{-p'}M_1^{-\frac{p'}{2} - 2\s}\Big)
 \les N^{\frac d 2 p' - (d+2)} \ld^{-p'},
\label{E13}
\end{align}

\noi
by making $\s = \s(p', \dl_1)$ in \eqref{E5a}
(and hence $B = B(p', \dl_1)$)
sufficiently large.

Lastly, suppose that
$\ld^2|A_\ld|^2 \les \III$ holds.
By $\ld \ges N^{\frac{d}{2}-\eps}$ and  \eqref{EE0},
 we have
\begin{align}
|A_\ld|
& \les
 N^{\frac d 2 p' - (d+2)} \ld^{-p'}
N^{-2 + \eps p' +(-\frac{p'}{2} + 5+ 2B+)\dl_2}
\les  N^{\frac d 2 p' - (d+2)} \ld^{-p'}
\label{E15}
\end{align}

\noi
as long as
we have $\eps p' \leq 1$
and
$\dl_2 = \dl_2(p', B)$ is sufficiently small
such that $(-\frac{p'}{2} + 5+ 2B+)\dl_2 \leq 1$.

Finally, given $q > 6$,
we choose $\theta < \frac 13$ such that
$ q > p' = \frac{2}{\theta} > 6$.
Then,  from \eqref{E11}, \eqref{E12}, \eqref{E13}, and \eqref{E15}
with $\ld \leq N^\frac{d}{2}$,
we obtain
\begin{align*}
|A_\ld|
 \les N^{\frac d 2 q - (d+2)} \ld^{-q}.
\end{align*}

\noi
This completes the proof of Proposition \ref{PROP:level} (ii).

%
%
%
%
%
%
%
%
%
%

%

%
%
\section{Well-posedness in subcritical spaces}
\label{SEC:4}

In this section, we prove local well-posedness of NLS \eqref{eq:nls1}
on irrational tori
in subcritical Sobolev spaces (Theorem \ref{THM:3}).
%
%
It turns out that the well-posed theory of \eqref{eq:nls1} is very similar
to that on the standard tours \cite{Bo2,Bo1, BoPCMI, Bo4}.

In the seminal paper \cite{Bo2}, Bourgain
introduced the $X^{s, b}$-space
whose norm is given by
\begin{equation}
\| u \|_{X^{s, b}(\R \times \T^d)} = \| \jb{\bn}^s \jb{\tau - |\bn|^2}^b\ft u(\bn, \tau)
\|_{L^2_\tau\ell^2_\bn(\R\times \mathbb{Z}^d )},
\label{F1}
\end{equation}

\noi
where $\jb{\, \cdot\, } = (1+|\cdot|^2)^\frac{1}{2}$.
After establishing Strichartz estimates,
he proved several well-posedness
results of NLS on the standard torus $\T^d$.
In our case, i.e.~on an irrational torus,
we need to replace the weight $\jb{\tau - |\bn|^2}^b$ in \eqref{F1}
by $\jb{\tau - Q(\bn)}^b$,
where $Q(\bn)$ is defined in \eqref{eq:Q}.
Then,
by the standard $X^{s,b}$-theory,
it is known that certain multilinear Strichartz estimates imply well-posedness.
More precisely, we have the following lemma.

\begin{lemma}\label{LEM:LWP1}
Let $s_0 > \max (0, s_c)$
and $I\subset \R$ be a bounded  interval.
Suppose that the following multilinear Strichartz estimate holds  for $s>s_0$:
\begin{align}\label{multi-est}
\bigg\|\prod_{j=1}^{k+1}e^{-it\Delta}\phi_j \bigg\|_{L^2_{t,\bx}(I\times \T^d)}
\les N_{\max}^{-s}
\prod_{j=1}^{k+1} N_j^{s}\|\phi_i\|_{L^2_\bx(\T^d) },
\end{align}

\noi
for all $\phi_j \in L^2(\T^d)$
with $\supp \widehat \phi_j \subset [-N_j,N_j]^d$, $j = 1, \dots,  k+1$,
and  $N_{\max} := \max(N_1,\cdots,N_{k+1} )$.
Then,
the Cauchy problem \eqref{eq:nls1} is locally well-posed in $H^s(\T^d)$ for $s> s_0$.
\end{lemma}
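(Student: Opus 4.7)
\medskip

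\noindent \textbf{Proof proposal.}
The plan is to run a Picard iteration for the Duhamel formulation of \eqref{eq:nls1} in the localized Bourgain space $X^{s,b}$ on a short time interval $[-\delta,\delta]$, with $b>\tfrac12$ taken very close to $\tfrac12$ and some $0<b'<\tfrac12$ satisfying $b+b'<1$. The standard linear estimate $\|\eta_\delta(t)e^{-it\Delta}u_0\|_{X^{s,b}} \les \|u_0\|_{H^s}$ and inhomogeneous estimate $\bigl\|\eta_\delta(t)\int_0^t e^{-i(t-t')\Delta}F(t')\,dt'\bigr\|_{X^{s,b}} \les \delta^{1-b-b'}\|F\|_{X^{s,-b'}}$ are insensitive to the precise form of the dispersion relation and carry over verbatim when $|\bn|^2$ is replaced by $Q(\bn)$ in \eqref{F1}. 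Combined with the embedding $X^{s,b}\hookrightarrow C_tH^s_\bx$, the contraction on a ball of $X^{s,b}$ of radius $\sim \|u_0\|_{H^s}$ reduces to the nonlinear bound
\[
\bigl\||u|^{2k}u\bigr\|_{X^{s,-b'}} \les \|u\|_{X^{s,b}}^{2k+1}, \qquad s>s_0.
\]

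By duality this is equivalent to
$\bigl|\iint u^{k+1}\bar u^{\,k}\bar v\, d\bx\, dt\bigr| \les \|u\|_{X^{s,b}}^{2k+1}\|v\|_{X^{-s,b'}}.$
I would perform a Littlewood--Paley decomposition $u=\sum_{N_j}u_{N_j}$ and $v=\sum_{M}v_{M}$. The integrand then has $2k+2$ dyadic factors; splitting them into two Cauchy--Schwarz groups of $k+1$ factors each, a typical dyadic summand is bounded by a product of two $L^2_{t,\bx}$ norms of $(k+1)$-fold dyadic products. The standard transference principle (valid because $b,\,1-b'>\tfrac12$) promotes the hypothesis \eqref{multi-est} to an $X^{0,b}$-norm bound for such $L^2_{t,\bx}$ products; applying it to each of the two factors produces
\[
\bigl|\iint (\cdots)\, d\bx\, dt\bigr| \les N_{(1)}^{-(s_0+\eps)}\,N_{(2)}^{-(s_0+\eps)}\Bigl(\prod_{j=1}^{2k+1}N_j^{s_0+\eps}\Bigr)M^{s_0+\eps}\prod_{j}\|u_{N_j}\|_{X^{0,b}}\|v_M\|_{X^{0,b'}},
\]
where $N_{(1)},N_{(2)}$ denote the largest frequencies in the two Cauchy--Schwarz groups, and $\eps=\eps(s,s_0)>0$ is chosen so that $s_0+\eps<s$.

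Rewriting $\|u_{N_j}\|_{X^{0,b}}=N_j^{-s}\|u_{N_j}\|_{X^{s,b}}$ and $\|v_M\|_{X^{0,b'}}=M^{s}\|v_M\|_{X^{-s,b'}}$, the surplus $s-s_0-\eps>0$ on each of the two highest frequencies, combined with the frequency-conservation constraint $M\les N_{(1)}$ imposed by the spatial integration, allows the dyadic sum in $(N_1,\dots,N_{2k+1},M)$ to close via Schur's test and Cauchy--Schwarz at each scale. The main obstacle will be combinatorial: the two Cauchy--Schwarz groups of $k+1$ factors must be organized so that each contains one of the two globally largest frequencies among $\{N_1,\dots,N_{2k+1},M\}$, and care is needed when several frequencies are comparable or when $M$ itself is of order $N_{(1)}$. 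Once this arrangement is fixed, the summation is routine and yields the desired multilinear bound, after which the contraction closes on a time interval of length $\delta = \delta(\|u_0\|_{H^s})$, giving local well-posedness in $H^s(\T^d)$ for every $s>s_0$.
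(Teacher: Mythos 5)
Your proposal is correct and is precisely the standard $X^{s,b}$ argument that the paper does not spell out but defers to \cite{Bo2, Bo1, Bo4, CW} with the remark that the proof is standard: duality, Littlewood--Paley decomposition, Cauchy--Schwarz into two groups of $k+1$ factors arranged so that each contains one of the two largest frequencies, transference of \eqref{multi-est} to $X^{0,b}$, and dyadic summation using the surplus $s-s_0>0$ together with the convolution constraint $M\les N_{(1)}$. The one point to state more carefully is the group containing the dual factor $v_M$, whose modulation weight is $b'<\tfrac12$: transference alone yields the $L^2_{t,\bx}$ product bound with every factor in $X^{0,\frac12+}$, so you must interpolate with a trivial H\"older--Bernstein bound to lower one modulation exponent below $\tfrac12$, at the cost of an extra $N^{0+}$ which is absorbed by the same $\eps$-room $s>s_0+\eps$ you have already reserved.
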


\noi
The proof of Lemma \ref{LEM:LWP1} is standard
and
we refer the readers to \cite{Bo2,Bo1,Bo4,CW} for details.

\begin{proof}[Proof of Theorem \ref{THM:3}]
In view of Lemma \ref{LEM:LWP1},
it suffices to prove 
the $ (k+1)$-linear estimate \eqref{multi-est} for $ s > s_0$.
Without loss of generality, assume that $N_1 \ge N_2\ge \cdots \ge N_{k+1}$.


\medskip

\noi
{\bf Case (i):}
 $d=2$. When $k=1$, the well-posedness result was already obtain in \cite{CW}.
In the following, we first consider the case $k =  2, 3, 4, 5$.
First, assume that $\supp \ft \phi_1 \subset [-N_2, N_2]^2$.
Then,
 by H\"older's inequality and \eqref{co-d2} with $\frac{4(7k+5) }{3(k+3)} \in ( 4, \frac{20}{3}]$, we have
\begin{align}
\bigg\|\prod_{j=1}^{k+1} e^{-it\Delta} \phi_j \bigg\|_{L^2_{t,\bx}}
& \le  \|e^{-it\Delta} \phi_1 \|_{L^\frac{4(7k+5) }{3(k+3)}} \|e^{-it\Delta} \phi_2 \|_{L^\frac{4(7k+5) }{3(k+3)}}
\prod_{j=3}^{k+1} \|e^{-it\Delta} \phi_j \|_{L^\frac{7k + 5}{2}} \notag \\
& \les  N_2^{\frac{7k-3}{7k+5} + 2\varepsilon}
\|\phi_1 \|_{L^2}\|\phi_2 \|_{L^2}\prod_{j=3}^{k+1}
 N^{\frac{7k-3}{7k+5}+\eps}_j \|\phi_j \|_{L^2}\notag \\
& \leq N_{\max}^{-s}
\prod_{j=1}^{k+1} N_j^{s}\|\phi_i\|_{L^2_\bx},
\label{F2}
\end{align}

\noi
for
$s > s_0 = \frac{7k-3}{7k+5}$.
%
%
%
%
In general, if $\supp \ft \phi_1 \subset [-N_1, N_1]^2$,
then we can write $\phi_1 = \sum_{|\bj|\les \frac{N_1}{N_2}} \phi_{1\bj}$,
where $\supp \ft \phi_{1\bj} \subset  N_2\,  \bj + [-N_2, N_2]^2$.
Letting $\psi_{1\bj} (\bx) = e^{ - 2\pi i  N_2 \bj \cdot \bx} \phi_{1\bj}(\bx) $,
we have  $\supp \ft \psi_{1\bj} \subset [-N_2, N_2]^2$.
Then, by a change of variables and \eqref{co-d2} (see \cite{BoPCMI}), we obtain
\begin{equation}
 \|e^{-it\Delta} \phi_{1\bj} \|_{L^\frac{4(7k+5) }{3(k+3)}_{t, \bx}}
=   \|e^{-it\Delta} \psi_{1\bj} \|_{L^\frac{4(7k+5) }{3(k+3)}}
 \les N_2^{\frac{7k-3}{2(7k+5)} + \eps}\|\phi_{1\bj}\|_{L^2_\bx}.
\label{F3}
\end{equation}

\noi
Then, by almost orthogonality with \eqref{F2} and \eqref{F3},
we have
\begin{align*}
\bigg\|\prod_{j=1}^{k+1}e^{-it\Delta}\phi_j \bigg\|_{L^2_{t,\bx}}^2
\les \sum_{|\bj|\les \frac{N_1}{N_2}}
\bigg\|e^{-it\Delta}\phi_{1\bj}  \prod_{j=2}^{k+1}e^{it\Delta}\phi_j \bigg\|_{L^2_{t,\bx}}^2
\les N_{\max}^{-2s} \prod_{j=1}^{k+1} N_j^{2s}\|\phi_j\|_{L^2_\bx}^2.
\end{align*}

\begin{remark}\rm
In view of the reduction above,
we only prove \eqref{multi-est}, assuming  $\supp \ft \phi_1 \subset [-N_2, N_2]^d$
in the following.
\end{remark}

Next, we consider the case $k \geq 5$.
By H\"older's inequality and \eqref{co-d2} with
$\frac{8k}{k+1} \in [ \frac{20}{3}, 10)$, we have
\begin{align*}
\bigg\|\prod_{j=1}^{k+1} e^{-it\Delta} \phi_j \bigg\|_{L^2_{t,\bx}}
& \le  \|e^{-it\Delta} \phi_1 \|_{L^\frac{8k}{k+1}}
\|e^{-it\Delta} \phi_2 \|_{L^\frac{8k}{k+1}}
\prod_{j=3}^{k+1} \|e^{-it\Delta} \phi_j \|_{L^4k} \notag \\
& \les  N_2^{1- \frac{1}{k} + 2\varepsilon}
\|\phi_1 \|_{L^2}\|\phi_2 \|_{L^2}\prod_{j=3}^{k+1}
 N^{1- \frac{1}{k}}_j \|\phi_j \|_{L^2_\bx}.
\end{align*}

\noi
Hence, \eqref{multi-est} holds for $s > s_0 = 1 - \frac{1}{k}$.

\medskip

\noi
{\bf Case (ii):}
$d=3$.
When $k=1$, the well-posedness result was obtained in \cite{Bo4}.
When $k=2$, by H\"older's inequality and \eqref{co-d3}, we have
\begin{align*}
\bigg\|\prod_{j=1}^{3} e^{-it\Delta} \phi_j \bigg\|_{L^2_{t,\bx}}
& \le  \|e^{-it\Delta} \phi_1 \|_{L^{\frac{104}{21}}} \|e^{-it\Delta} \phi_2 \|_{L^{\frac{104}{21}}}
\|e^{-it\Delta} \phi_3 \|_{L^{\frac{52}5}} \\
& \les  N_2^{\frac{53}{52} + 2 \eps }
 N^{\frac{53}{52}}_3 \prod_{j = 1}^3 \|\phi_j \|_{L^2_\bx}.
\end{align*}

\noi
\noi
Hence, \eqref{multi-est} holds for $s > s_0 = \frac{53}{52}$.
When $k\ge 3$, by H\"older's inequality and \eqref{co-d3}, we have
\begin{align*}
\bigg\|\prod_{j=1}^{k+1} e^{-it\Delta} \phi_j \bigg\|_{L^2_{t,\bx}}
& \le  \|e^{-it\Delta} \phi_1 \|_{L^{\frac{20k}{3k+2}}} \|e^{-it\Delta} \phi_2 \|_{L^{\frac{20k}{3k+2}}} \prod_{j=3}^{k+1} \|e^{-it\Delta} \phi_j \|_{L^{5k}} \\
& \les  N_2^{\frac32- \frac1{k}} \|\phi_1 \|_{L^2} \|\phi_2 \|_{L^2} \prod_{j=3}^{k+1} N^{\frac32- \frac1{k}}_j \|\phi_j \|_{L^2_\bx}.
\end{align*}

\noi
Hence,  \eqref{multi-est} holds $s \geq s_0 = \frac32- \frac1{k}$.

\medskip

\noi
{\bf Case (iii):}
 $d\ge 4$ and $k\ge 1$.
 By H\"older's inequality with  \eqref{co-d4} or \eqref{co-d5}, we have
\begin{align*}
\bigg\|\prod_{j=1}^{k+1} e^{-it\Delta} \phi_j \bigg\|_{L^2_{t,\bx}}
& \le  \|e^{-it\Delta} \phi_1 \|_{L^{\frac{4(d+2)k}{dk+2}}} \|e^{-it\Delta} \phi_2 \|_{L^{\frac{4(d+2)k}{dk+2}}} \prod_{j=3}^{k+1} \|e^{-it\Delta} \phi_j \|_{L^{(d+2)k}} \\
& \les  N_2^{\frac d2- \frac1{k}+\varepsilon} \|\phi_1 \|_{L^2} \|\phi_2 \|_{L^2}\prod_{j=3}^{k+1} N^{\frac d2- \frac1{k}}_j \|\phi_j \|_{L^2_\bx}.
\end{align*}

\noi
Hence,  \eqref{multi-est} holds  for $s > s_0 = \frac d2- \frac1{k}$.
\end{proof}

\section{Well-posedness in critical spaces}
\label{SEC:critical}

\subsection{Function spaces} \label{SUBSEC:G1}

In this section, we prove local well-posedness of NLS \eqref{eq:nls1}
on irrational tori
in critical Sobolev spaces  $H^{s_c}(\T^d)$ (Theorem \ref{THM:4}).
In the following, we use
the $U^p$- and $V^p$-spaces, developed by
Tataru, Koch, and their collaborators \cite{KochT, HHK, HTT11, HTT2}.
These spaces have been very effective in
establishing well-posedness  of various dispersive PDEs in critical spaces.
We briefly go over the basic definitions
of function spaces and   their properties.
See Hadac-Herr-Koch \cite{HHK} and Herr-Tataru-Tzvetkov \cite{HTT11} for detailed proofs.

Let $H$ be a separable Hilbert space over $\C$.
 In particular, it will be either $H^s(\T^d)$ or $\C$.
%
 Let $\mathcal{Z}$ be the collection of finite partitions $\{t_k\}_{k = 0}^K$ of $\R$:
 $-\infty < t_0 < \cdots < t_K \leq \infty$.
 If $t_K = \infty$,
 we use the convention $u(t_K) :=0$
 for all functions $u:\R\to H$.
We use $\ind_I$ to denote the sharp characteristic function
of a set $I \subset \R$.

 \begin{definition} \label{DEF:X1}\rm
Let $1\leq p < \infty$.

\smallskip
\noi
\textup{(i)}
A $U^p$-atom is defined by a step function $a:\R\to H$
of the form
\[ a = \sum_{k = 1}^K \ind_{[t_{k-1}, t_k)}\phi_{k - 1}, \]

\noi
where $\{t_k\}_{k = 0}^K \in \mathcal{Z}$
and $\{\phi_k\}_{k = 0}^{K-1} \subset H$
with $\sum_{k = 0}^{K-1} \|\phi_k\|_H^p = 1$.
Then, we define the atomic space $U^p(\R; H)$
to be the collection of functions $u:\R\to H$
of the form
\begin{equation} u = \sum_{j = 1}^\infty \ld_j a_j,
\quad \text{ where $a_j$'s are $U^p$-atoms and $\{\ld_j\}_{j \in \mathbb{N}}\in \l^1(\mathbb{N}; \C)$},
\label{X1}
\end{equation}

\noi
with the norm
\[ \|u\|_{U^p(\R; H)} : = \inf \Big\{ \|{ \bf \ld} \|_{\l^1}
: \eqref{X1} \text{ holds with } \ld = \{\ld_j \}_{j \in \mathbb{N}}
\text{ and some $U^p$-atoms } a_j\Big\}.\]

\smallskip
\noi
\textup{(ii)}
We define $V^p(\R; H)$
by the collection of functions $u : \R \to H$
with $\|u\|_{V^p(\R; H)} < \infty$,
where the $V^p$-norm is defined by
\[
\|u\|_{V^p(\R; H)}
:= \sup_{\{t_k\}_{k = 0}^K \in \mathcal{Z}}
\bigg(\sum_{k = 1}^K\|u(t_k) - u(t_{k-1})\|_H^p\bigg)^\frac{1}{p}.
\]

\noi
We also define $V^p_\text{rc}(\R; H)$
to be the closed subspace of all right-continuous functions
in $V^p(\R; H)$ such that
$\lim_{t \to -\infty} u(t) = 0$.

\smallskip
\noi
\textup{(iii)}
Let $s \in \R$.
We define $U^p_\Dl H^s$ (and $V^p_\Dl H^s$, respectively)
to be the spaces of all functions $u: \R \to H^s(\T^d)$
such that the following
$U^p_\Dl H^s$-norm (and $V^p_\Dl H^s$-norm, respectively)
is finite:
\[ \|u \|_{U^p_\Dl H^s} := \|e^{it \Dl} u\|_{U^p(\R; H^s)}
\quad \text{and} \quad
\|u \|_{V^p_\Dl H^s} := \|e^{it \Dl} u\|_{V^p(\R; H^s)}.\]

\noi
Here, the Laplacian $\Dl$ is defined
in terms of $Q(\bn)$ as in \eqref{eq:Q0}.
 \end{definition}

\begin{remark}\label{REM:UpVp} \rm
Note that
the spaces $U^p(\R; H)$,
$V^p(\R; H)$,  and $V^p_\text{rc}(\R; H)$
are Banach spaces.
Moreover, we have the following embeddings:
\begin{equation*}
U^p(\R; H) \hookrightarrow V^p_\text{rc}(\R; H)
\hookrightarrow U^q(\R; H)   \hookrightarrow L^\infty(\R; H)
\end{equation*}

\noi
for $ 1\leq p < q < \infty$.
Similar embeddings hold for $U^p_\Dl H^s$ and $V^p_\Dl H^s$.
%
%
\end{remark}

Next, we state a transference principle and an interpolation result.

\begin{lemma}\label{LEM:Xinterpolate}
\textup{(i)}
{\rm (transference principle)}
Suppose that we have
\[ \big\| T(e^{-it \Dl} \phi_1, \dots, e^{-it \Dl} \phi_k)\big\|_{L^p_t L^q_\bx(\R\times \T^d)}
\les \prod_{j = 1}^k \|\phi_j\|_{L^2_\bx}\]

\noi
for some $1\leq p, q \leq \infty$.
Then, we have
\[ \big\| T(u_1,  \dots, u_k)\big\|_{L^p_t L^q_\bx(\R\times \T^d)}
\les \prod_{j = 1}^k \|u_j\|_{U^p_\Dl L^2_\bx}.\]

\noi
\textup{(ii)}
{\rm (interpolation)}
 Let $E$ be a Banach space.
Suppose that $T: U^{p_1}\times \cdots \times U^{p_k} \to E$
is a bounded $k$-linear operator such that
\[ \|T(u_1, \dots, u_k)\|_{E} \leq C_1 \prod_{j = 1}^k \|u_j\|_{U^{p_j}}\]

\noi
for some $p_1, \dots, p_k > 2$.
Moreover, assume that there exists $C_2 \in (0, C_1]$
such that
\[ \|T(u_1, \dots, u_k)\|_{E} \leq C_2 \prod_{j = 1}^k \|u_j\|_{U^{2}}.\]

\noi
Then, we have
\[ \|T(u_1, \dots, u_k)\|_{E} \leq C_2 \Big(\ln \frac{C_1}{C_2}+ 1\Big)^k
 \prod_{j = 1}^k \|u_j\|_{V^2}\]

\noi
for $u_j \in V^2_\textup{rc}$, $j = 1, \dots, k$.
\end{lemma}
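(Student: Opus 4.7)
For (i), the plan is to pass to the atomic decomposition of $U^p$. Writing $v_j := e^{it\Delta}u_j \in U^p(\R; L^2)$ as $v_j = \sum_n \lambda_{j,n} a_{j,n}$ with $\sum_n |\lambda_{j,n}|$ arbitrarily close to $\|u_j\|_{U^p_\Dl L^2}$ and each atom of the form $a_{j,n} = \sum_l \ind_{[t^{(j,n)}_{l-1}, t^{(j,n)}_l)} \phi^{(j,n)}_{l-1}$ with $\sum_l \|\phi^{(j,n)}_{l-1}\|_{L^2}^p = 1$, I have $u_j(t) = \sum_{n,l} \lambda_{j,n} \ind_{I^{(j,n)}_l}(t)\, e^{-it\Delta}\phi^{(j,n)}_{l-1}$. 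The multilinear operators $T$ we care about act pointwise in $(t,\bx)$ (products of solutions and complex conjugates), so multilinear expansion followed by disjointness of the product intervals reduces the bound to
\[
\bigg\|\sum_{l_1,\dots,l_k} \Big(\prod_j \ind_{I_{l_j}^{(j,n_j)}}\Big) T(e^{-it\Dl}\phi^{(1,n_1)}_{l_1-1},\dots,e^{-it\Dl}\phi^{(k,n_k)}_{l_k-1})\bigg\|_{L^p_tL^q_\bx}.
\]
Applying the hypothesis on each product slab and summing in $\ell^p$ across the disjoint time-intervals (in $\ell^\infty$ when $p=\infty$) recovers the unit $U^p$-atom bound via H\"older in the $\phi$'s; summing $\ell^1$ over atomic coefficients gives the claimed estimate. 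This part is essentially a bookkeeping exercise once pointwise-in-time structure is exploited.

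For (ii), the plan is to apply the quantitative interpolation argument of Hadac--Herr--Koch in its multilinear form. Given $u_j \in V^2_{\textup{rc}}$ and a threshold $M \in \mathbb{N}$, I will split $u_j = u_j^{\sharp,M} + u_j^{\flat,M}$, where $u_j^{\sharp,M}$ is a step-function approximation of $u_j$ collecting only those jumps of size at least $2^{-M}\|u_j\|_{V^2}$, so that (a) the number of such jumps is controlled by $2^{2M}$ from the $V^2$-norm, which yields $\|u_j^{\sharp,M}\|_{U^2} \lesssim \|u_j\|_{V^2}$ with a universal constant; and (b) the residual $u_j^{\flat,M}$ has all jumps bounded by $2^{-M}\|u_j\|_{V^2}$, producing $\|u_j^{\flat,M}\|_{U^{p_j}} \lesssim 2^{-M(1-2/p_j)}\|u_j\|_{V^2}$ after summation of the $p_j$-th power of the jump sizes. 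Expanding $T(u_1,\dots,u_k)$ multilinearly using $u_j = u_j^{\sharp,M} + u_j^{\flat,M}$ gives $2^k$ terms: the term with all $u_j^{\sharp,M}$ is estimated by $C_2$ in $U^2$, and every term containing at least one $u_j^{\flat,M}$ is estimated by $C_1$ in $U^{p_j}$ and gains a factor $2^{-M(1-2/p_j)}$. Choosing $2^M$ so as to balance $C_2$ against $C_1 \cdot 2^{-M(1-2/p_j)}$, i.e.\ $M \sim \log(C_1/C_2)$, one obtains
\[
\|T(u_1,\dots,u_k)\|_E \lesssim C_2 \prod_j \|u_j\|_{V^2},
\]
from the balanced scale, while the sum over the finitely many dyadic scales near the balance yields the $\bigl(\log(C_1/C_2)+1\bigr)^k$ factor (one $\log$ per multilinear slot).

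The routine part is (i); the main obstacle is the construction of the decomposition $u_j = u_j^{\sharp,M} + u_j^{\flat,M}$ in (ii) with \emph{simultaneous} $U^2$- and $U^{p_j}$-control governed only by $\|u_j\|_{V^2}$. This is essentially Proposition 2.20 of Hadac--Herr--Koch, and I will invoke its multilinear refinement verbatim; the factor $(\log(C_1/C_2)+1)^k$ is tight in the sense that each of the $k$ coordinates contributes an independent logarithmic loss.
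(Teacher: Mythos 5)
Your argument follows the same route as the paper, which proves this lemma essentially by citation: Proposition 2.19 of Hadac--Herr--Koch for the transference principle via the atomic decomposition (your part (i) is correct as written, granting the implicit assumption that $T$ acts pointwise in $t$, which is how the lemma is used), and Proposition 2.20 of Hadac--Herr--Koch together with its trilinear extension in Herr--Tataru--Tzvetkov for the interpolation. One caveat on (ii): the two bounds you assert for the splitting $u_j=u_j^{\sharp,M}+u_j^{\flat,M}$ --- a $U^2$ bound on $u_j^{\sharp,M}$ with a \emph{universal} constant and simultaneously an exponentially small $U^{p_j}$ bound on $u_j^{\flat,M}$ --- cannot both hold, since together they would let you send $M\to\infty$ and eliminate the logarithmic loss entirely, which is known to be unavoidable; in the actual decomposition one has $\|u_j^{\sharp,M}\|_{U^2}\lesssim M\,\|u_j\|_{V^2}$, and it is this linear growth in $M\sim\ln(C_1/C_2)$, one factor per multilinear slot, that produces $\big(\ln\frac{C_1}{C_2}+1\big)^k$, rather than a sum over dyadic scales near the balance point. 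Since you ultimately invoke the Hadac--Herr--Koch result verbatim, this is a misstatement of the mechanism rather than a fatal gap.
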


\noi
A transference principle as above has been commonly used
in the Fourier restriction norm method.
 See \cite[Proposition 2.19]{HHK}
for the proof of  Lemma \ref{LEM:Xinterpolate} (i).
The proof of the interpolation result follows from extending the trilinear result in \cite{HTT11}
to a general $k$-linear case.
See also   \cite[Proposition 2.20]{HHK}.

\smallskip

Let $\eta: \R \to [0, 1]$ be an even smooth cutoff function
supported on $[-\frac{8}{5}, \frac{8}{5}]$
such that $\eta \equiv 1$ on $[-\frac{5}{4}, \frac{5}{4}]$.
Given a dyadic number $N \geq 1$, we
set
$\eta_1(\xi) = \eta(|\xi|)$
and
\[\eta_N(\xi) = \eta\Big(\frac{|\xi|}{N}\Big) - \eta\Big(\frac{2|\xi|}{N}\Big)\]

\noi
for $N \geq 2$.
Then, we define the Littlewood-Paley projection operator
$P_N$ as the Fourier multiplier operator with symbol $\eta_N$.
Moreover, we define $P_{\leq N}$
by $P_{\leq N} = \sum_{1 \leq M \leq N} P_M$.
More generally,
given a set $R \subset \Z^d$,
we define $P_R$
to be the Fourier multiplier operator
with symbol $\ind_R$.

\begin{definition} \label{DEF:X3}
\textup{(i)}
Let $s\in \R$.
We define $X^s$ to be the space of all functions
$u : \R \to H^s(\T^d)$
such that $\| u\|_{X^s} < \infty$,
where the $X^s$-norm is defined by
\[ \|u \|_{X^s} : = \bigg( \sum_{\bn \in \Z^d} \jb{\bn}^{2s}
\big\| e^{- i t Q(\bn)} \ft{u}(\bn, t) \big\|_{U^2(\R_t; \C)}^2\bigg)^\frac{1}{2}.
\]

\smallskip
\noi
\textup{(ii)}
Let $s\in \R$.
We define $Y^s$ to be the space of all functions
$u : \R \to H^s(\T^d)$
such that
for every $\bn \in \Z^d$, the map $t \mapsto e^{- it Q(\bn)} \ft{u}(\bn, t)$
is in $V^2_\textup{rc}(\R_t; \C)$
and
$\| u\|_{Y^s} < \infty$,
where the $Y^s$-norm is defined by
\[ \|u \|_{Y^s} : = \bigg( \sum_{\bn \in \Z^d} \jb{\bn}^{2s}
\big\| e^{- i t Q(\bn)} \ft{u}(\bn, t) \big\|_{V^2(\R_t; \C)}^2\bigg)^\frac{1}{2}.
\]

\end{definition}

\noi
Recall the following embeddings:
\begin{equation}
U^2_\Dl H^s \hookrightarrow X^s \hookrightarrow
Y^s \hookrightarrow V^2_\Dl H^s.
\label{X3}
\end{equation}

\noi
Given
a time interval $I \subset \R$,
we define the restrictions $X^s(I)$
and $Y^s(I)$ of these spaces
in the usual manner.

We now state the linear estimates.
Given $f \in L^1_\text{loc}([0, \infty); L^2(\T^d))$,
define $\mathcal{I}(f)$ by
\[ \mathcal{I}(f)(t): = \int_0^t e^{-i(t- t') \Dl} f(t') dt'.\]

 \begin{lemma}[Linear estimates]\label{LEM:Xlinear}
 Let $s   \geq 0$ and $T > 0$.
 Then, the following linear estimates hold:
 \begin{align*}
  \|e^{-it \Dl} \phi\|_{X^s([0, T))}
  & \le \|\phi\|_{H^s}, \\
  \|\mathcal{I}(f)\|_{X^s([0, T))}
 & \leq \sup_{\substack{v \in Y^{-s}([0, T))\\\|v\|_{Y^{-s} = 1}}}
 \bigg| \int_0^T \int_{\T^d} f(\bx, t) \cj{v(\bx, t)} d\bx dt\bigg|,
 \end{align*}

 \noi
for all $\phi \in H^s(\T^d)$ and $f \in L^1([0, T); H^s(\T^d))$.
 \end{lemma}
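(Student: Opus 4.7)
The plan is to verify both inequalities by unfolding the definitions of $X^s$ and $Y^{-s}$ and invoking the $U^2$--$V^2$ duality from \cite{HHK,HTT11}. Throughout, I extend functions defined on $[0,T)$ to all of $\R$ by zero, and I exploit the convention $u(t_K)=0$ for $t_K=\infty$ built into the definition of a $U^p$-atom.

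For the homogeneous estimate, set $\tilde u(t):=\mathbf 1_{[0,T)}(t)\,e^{-it\Delta}\phi$. For each $\bn\in\Z^d$ the untwisted Fourier coefficient $e^{-itQ(\bn)}\widehat{\tilde u}(\bn,t)=\widehat\phi(\bn)\,\mathbf 1_{[0,T)}(t)$ is a single step function that equals $|\widehat\phi(\bn)|$ times a $U^2$-atom (take $K=1$, partition $\{0,T\}$, and $\phi_0=\widehat\phi(\bn)/|\widehat\phi(\bn)|$). Hence its $U^2(\R;\C)$-norm is at most $|\widehat\phi(\bn)|$; squaring, weighting by $\langle\bn\rangle^{2s}$, and summing in $\bn$ give $\|\tilde u\|_{X^s}\le\|\phi\|_{H^s}$, which dominates the restricted norm.

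For the Duhamel estimate I would argue mode by mode through $U^2$--$V^2$ duality. The untwisted coefficient
$$g_\bn(t):=e^{-itQ(\bn)}\,\widehat{\mathcal I(f)}(\bn,t)=\int_0^t e^{-it'Q(\bn)}\widehat f(\bn,t')\,dt'$$
is absolutely continuous, with $g_\bn(0)=0$ and $g_\bn'(t)=e^{-itQ(\bn)}\widehat f(\bn,t)$. The $U^2$--$V^2$ duality (cf.\ Theorem 2.8 and Proposition 2.10 of \cite{HHK}), realized as a Riemann--Stieltjes pairing, yields
$$\|g_\bn\|_{U^2}\le\sup_{\|h_\bn\|_{V^2}\le 1}\bigg|\int_0^T \overline{h_\bn(t)}\,e^{-itQ(\bn)}\widehat f(\bn,t)\,dt\bigg|.$$
Viewing $X^s$ and $Y^{-s}$ as the $\langle\bn\rangle^{2s}$-weighted $\ell^2$-direct sums of $U^2$ and of $V^2$ in the mode variable, I would assemble the mode-wise extremizers $\{h_\bn\}$ into a single test function $v\in Y^{-s}([0,T))$ via $\widehat v(\bn,t)=\langle\bn\rangle^{-2s}c_\bn\,e^{itQ(\bn)}h_\bn(t)$ for an $\ell^2$-sequence $(c_\bn)$ supplied by the weighted-$\ell^2$ duality at the level of modes. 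Parseval then converts the summed mode-wise pairing into the space-time $L^2$ pairing $\int_0^T\!\int_{\T^d} f\,\overline v\,d\bx\,dt$, and Cauchy--Schwarz in $\bn$ closes the estimate.

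The principal obstacle is the duality-assembly step: one must show that a collection $\{h_\bn\}$ of unit $V^2$-vectors can be reorganized into a single $v$ with $\|v\|_{Y^{-s}([0,T))}\le 1$ \emph{without loss of constants}, and that the Riemann--Stieltjes pairing coincides with the Lebesgue $L^2$ pairing for admissible $f$. Both points are standard within the $U^p/V^p$ framework (compare the proofs of Propositions 2.10--2.11 of \cite{HHK}), but they demand some care around the right-continuity convention and the $\lim_{t\to-\infty}=0$ normalization defining $V^2_{\textup{rc}}$. Once these identifications are made, summing the mode-wise bound against an $\ell^2$-dualizer delivers the claimed inequality.
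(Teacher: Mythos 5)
The paper states this lemma without proof; it is quoted essentially verbatim (with $|\bn|^2$ replaced by $Q(\bn)$, which changes nothing) from Propositions 2.10 and 2.11 of \cite{HTT11}, whose argument is exactly the one you give: the free evolution's untwisted Fourier coefficient $\ind_{[0,T)}(t)\ft\phi(\bn)$ is a multiple of a single $U^2$-atom, and the Duhamel term is handled mode by mode via the $U^2$--$V^2$ duality for absolutely continuous functions, with the mode-wise dual elements reassembled into one $v\in Y^{-s}$ by weighted $\ell^2$ duality and Parseval. Your proposal is correct and takes the same route as the cited source.
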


Next, we present the crucial multilinear estimate.

\begin{proposition}\label{PROP:XLWP}
Let $d$ and $k$ satisfy
  \begin{align}\label{admis1}
\textup{(i) } d = 2, \ \, k \ge 6, \quad
    \textup{(ii) }d= 3, \ \, k\ge3,
    \quad \text{ or } \quad
    \textup{(iii) }    d\ge 4, \ \, k\ge 2.
  \end{align}

\noi
Then,  the following multilinear estimate holds
for all  $T \in (0, 1]$:
  \begin{align}
     \bigg\| \mathcal{I}
  \bigg(   \prod_{j=1}^{2k+1}u^*_j\bigg)\bigg\|_{X^{s_c}([0, T))}
     \les
     \sum_{j = 1}^{2k+1} \bigg(\|u_j\|_{X^s([0, T))}
     \prod_{\substack{ \l = 1\\ \l \ne j}}^{2k+1} \|u_\l\|_{X^{s_c}([0, T))}\bigg),
\label{X0}
  \end{align}

\noi
for $s \geq s_c  = \frac d2 - \frac 1k >0$,
where $u^*_j$ denotes either $u_j$ or $\cj u_j$.
\end{proposition}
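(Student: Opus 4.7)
\emph{Reduction to a dyadic multilinear estimate.} The plan is to adapt the $U^p$--$V^p$ framework of Herr--Tataru--Tzvetkov to the irrational-tori setting. By the duality statement in Lemma \ref{LEM:Xlinear}, it suffices to prove that for every $v\in Y^{-s_c}([0,T))$ with $\|v\|_{Y^{-s_c}}\leq 1$,
\[
\bigg|\int_0^T\!\!\int_{\T^d}\prod_{j=1}^{2k+1} u_j^*(\bx,t)\,\overline{v(\bx,t)}\,d\bx\,dt\bigg|
\lesssim \sum_{j=1}^{2k+1}\bigg(\|u_j\|_{X^s}\prod_{\ell\ne j}\|u_\ell\|_{X^{s_c}}\bigg).
\]
After a Littlewood--Paley decomposition $u_j=\sum_{N_j}P_{N_j}u_j$ and $v=\sum_{N_0}P_{N_0}v$, and (by symmetry) assuming $N_1\ge N_2\ge\cdots\ge N_{2k+1}$ with $N_0\lesssim N_1$ (forced by the convolution constraint on the Fourier side), matters reduce to a dyadic $(2k+2)$-linear bound of the schematic form
\[
\bigg|\int_0^T\!\!\int_{\T^d}\overline{P_{N_0}v}\prod_{j=1}^{2k+1}P_{N_j}u_j^*\,d\bx\,dt\bigg|
\lesssim \bigg(\frac{N_{2k+1}}{N_1}\bigg)^{\delta}\,N_0^{s_c}\prod_{j=1}^{2k+1}N_j^{-s_c}\,\|P_{N_0}v\|_{Y^0}\prod_{j=1}^{2k+1}\|P_{N_j}u_j\|_{Y^0}
\]
for some $\delta>0$, so that summation in the dyadic parameters is geometrically convergent.

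\emph{Proof of the dyadic estimate.} I would apply H\"older's inequality in $L^{2k+2}_{t,\bx}$ spread evenly across all $2k+2$ factors, combined with the scaling-invariant Strichartz estimate \eqref{P1} at exponent $p=2k+2$ from Theorem \ref{THM:2}(i). The transference principle (Lemma \ref{LEM:Xinterpolate}(i)) lifts these Strichartz bounds from free solutions $e^{-it\Delta}\phi$ to arbitrary functions in $U^{2k+2}_\Delta L^2$; the interpolation result (Lemma \ref{LEM:Xinterpolate}(ii)) then upgrades the $U^{2k+2}$-bound to a $V^2$-bound at the cost of a logarithmic factor in $N_1$, which is absorbed into a slightly smaller polynomial gain $\delta'<\delta$. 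Once the dyadic estimate is in place, summing over dyadic $N_0,N_1,\dots,N_{2k+1}$ and placing the $X^s$-norm (with $s\ge s_c$) on the factor of highest frequency recombines the right-hand side into the bound asserted in \eqref{X0}.

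\emph{The main obstacle.} The delicate point is extracting a strictly positive high--low gain $\delta>0$ purely from linear Strichartz estimates. On the standard torus, Herr--Tataru--Tzvetkov's analogous estimate rests on a sharp bilinear $L^2$ Strichartz refinement driven by number-theoretic counting of lattice points on paraboloids, and no such refinement is available on irrational tori. The hypotheses \eqref{admis1} on $(d,k)$ are precisely those for which $p=2k+2$ lies in the admissible range of Theorem \ref{THM:2}(i): namely $p\ge 4$ (hence $k\ge 2$) for $d\ge 5$; $p>4$ (hence $k\ge 2$) for $d=4$; $p>\tfrac{16}{3}$ (hence $k\ge 3$) for $d=3$; and $p>\tfrac{20}{3}$ (hence $k\ge 6$) for $d=2$. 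Verifying that the resulting exponent bookkeeping indeed yields a strictly positive $\delta$, and that the interpolation loss is tame enough to be reabsorbed into that gain, is the heart of the argument.
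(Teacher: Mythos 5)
Your reduction (duality via Lemma \ref{LEM:Xlinear}, Littlewood--Paley decomposition, and the identification of a high--low gain $(N_{2k+1}/N_1+1/N_2)^{\delta}$ as the crux) matches the paper's strategy, but the mechanism you propose for producing that gain does not work, and this is not merely a bookkeeping issue to be ``verified.'' A H\"older application spread evenly over all $2k+2$ factors in $L^{2k+2}_{t,\bx}$, with the linear Strichartz estimate \eqref{P1} applied to each, is symmetric in the frequencies and therefore cannot produce any factor depending on the ratio $N_{2k+1}/N_1$. Worse, it fails already at the top frequencies: since the dual function $v$ is measured in $Y^{-s_c}$, the factor $P_{N_0}v$ contributes a weight $N_0^{s_c}$ with $N_0\sim N_1$, so the two highest-frequency factors must together be estimated with \emph{no} net positive power of $N_1$. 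The symmetric treatment gives each of them a cost $N^{\frac d2-\frac{d+2}{2k+2}}$, which is only marginally below $N^{s_c}$, leaving an uncontrolled factor of order $N_1^{2s_c-}$ after pairing with $\|P_{N_0}v\|_{Y^{-s_c}}$. What is needed is a genuinely bilinear-type improvement at the top frequency, and this is exactly what the paper supplies: first Cauchy--Schwarz splits the $(2k+2)$-linear integral into two $(k+1)$-linear $L^2_{t,\bx}$ norms (Lemma \ref{LEM:XmultiY} / Proposition \ref{PROP:LWP2}), in which the highest-frequency factor carries no power of $N_1$ at all; then, within each, the cube of side $N_2$ containing the top frequency is partitioned into slabs of width $M=\max(N_2^2/N_1,1)$ orthogonal to its center, almost orthogonality of the time frequencies $M^2\l^2+O(M^2\l)$ decouples the slabs in $L^2_{t,\bx}$, and the refined strip estimate of Lemma \ref{LEM:Gstr2} (interpolation of \eqref{G3} with Bernstein on the slab) produces the factor $(M/N_2)^{\delta}$. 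The remaining part of the gain comes from an \emph{asymmetric} H\"older distribution \eqref{G8}--\eqref{G10}: exponent $p$ slightly above the admissible threshold on the two largest frequencies, the scaling-critical exponent $p_{d,k}=(d+2)k$ on the middle factors, and a larger exponent $q$ on the smallest frequency, yielding $N_2^{-\delta}N_{k+1}^{\delta}$ via \eqref{G9} and \eqref{G12}.

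Relatedly, your identification of where the hypotheses \eqref{admis1} come from is incorrect. They are not the condition that $p=2k+2$ be admissible: that would give $k\ge 3$ for $d=2$ and $k\ge 2$ for $d=3$, not $k\ge 6$ and $k\ge 3$. The actual constraints arise from requiring the window \eqref{G8} for the asymmetric exponent $p$ to be nonempty, e.g.\ $\frac{20}{3}<\frac{8k}{k+1}$ forces $k\ge 6$ when $d=2$, and $\frac{16}{3}<\frac{20k}{3k+2}$ forces $k\ge 3$ when $d=3$. Finally, to run the $U^p$--$V^2$ interpolation of Lemma \ref{LEM:Xinterpolate} (ii) you need \emph{two} dyadic estimates --- a $U^2$-based one carrying the gain and a $U^p$-based one without it (the paper's \eqref{Y3} and \eqref{Y4}) --- whereas your sketch only provides one.
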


\noi
Once we prove Proposition \ref{PROP:XLWP},
one can prove
Theorem \ref{THM:4}
by the fixed point argument as in \cite{HTT11,W}.
We omit details.
The remainder of this section is devoted to the proof of Proposition \ref{PROP:XLWP}.
Indeed, the multilinear estimate \eqref{X0}
follows once we prove the following
 multilinear Strichartz estimate.

\begin{proposition}\label{PROP:LWP2}
Let $d$ and $k$ satisfy \eqref{admis1}.
Then,  there exists $\dl > 0$ such that the following multilinear Strichartz estimate holds:
  \begin{align}
     \bigg\|\prod_{j=1}^{k+1}P_{N_j} e^{-it \Dl} \phi_j\bigg\|_{L^2_{t, \bx}(I \times\T^d)}
     \les& \left(\frac{N_{k+1}}{N_1}+\frac1{N_2}\right)^{\dl}\|P_{N_1}\phi_1\|_{L^2_\bx(\T^d)}
     \prod_{j=2}^{k+1}N^{s_c}_j \|P_{N_j}\phi_j\|_{L^2_\bx(\T^d)},
\label{G1}
  \end{align}

\noi
for
any interval $I\subset [0, 1]$
and for
all $\phi_j \in L^2(\T^d)$, $j = 1, \dots, k+1$,
and $N_1\ge N_2 \ge \cdots \ge N_{k+1}\ge 1$.
\end{proposition}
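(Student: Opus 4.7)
The plan is to reduce Proposition \ref{PROP:LWP2} to an appropriate bilinear $L^2$ Strichartz estimate, applied to the extreme (highest/lowest) frequencies, combined with the scaling-invariant linear Strichartz estimates of Theorem \ref{THM:2} for the remaining factors.

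\textbf{Step 1 (bilinear Strichartz with gain).} First I would prove a bilinear $L^2$ Strichartz estimate of the form
\begin{equation*}
\bigl\|P_{N_1} e^{-it\Delta}\phi_1 \cdot P_{N_2} e^{-it\Delta}\phi_2\bigr\|_{L^2_{t,\bx}(I \times \T^d)}
\lesssim \left(\frac{N_2}{N_1}+\frac{1}{N_2}\right)^{\!\delta} N_2^{\frac{d-1}{2}} \|\phi_1\|_{L^2}\|\phi_2\|_{L^2}, \qquad N_1 \ge N_2 \ge 1.
\end{equation*}
Expanding in the Fourier series and applying Plancherel, this reduces to counting, for each $(\bm,k) \in \Z^d\times \R$, the number of pairs $(\bn_1,\bn_2)$ with $\bn_j$ in a dyadic cube of size $N_j$, $\bn_1+\bn_2=\bm$, and $|Q(\bn_1)+Q(\bn_2)-k|\lesssim 1$. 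Substituting $\bn_2=\bm-\bn_1$ reduces the second constraint to an equation of the form $2 Q(\bn_1)-2\bn_1\cdot \bV(\bm)=\text{const}$, whose solutions lie on a thin shell inside a cube of size $N_2$; when $N_1\gg N_2$ the shell is essentially transverse to the support of $\ft\phi_1$, yielding the $(N_2/N_1)^\delta$ gain, while the $N_2^{-\delta}$ alternative is obtained by a direct lattice-point bound at the smallest scale.

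\textbf{Step 2 (assembly).} Writing
$$\prod_{j=1}^{k+1} P_{N_j} e^{-it\Delta}\phi_j = \bigl(P_{N_1} e^{-it\Delta}\phi_1 \cdot P_{N_{k+1}} e^{-it\Delta}\phi_{k+1}\bigr)\cdot \prod_{j=2}^{k} P_{N_j} e^{-it\Delta}\phi_j,$$
I would apply H\"older's inequality in $L^2_{t,\bx}$ with exponents $(2,q,\ldots,q)$, $\tfrac{1}{q}=\tfrac{1}{2(k-1)}$, estimate the bracketed factor by the bilinear inequality of Step~1, and bound each of the middle $k-1$ factors by the scaling-invariant linear Strichartz inequality $\|P_N e^{-it\Delta}\phi\|_{L^p_{t,\bx}} \lesssim N^{s_c(p)}\|\phi\|_{L^2}$ with $p = 2(k-1)\cdot (k-1)^{-1}\cdot \text{(adjusted)}$, of Theorem \ref{THM:2}. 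A routine computation then shows that the power of $N_1$ collapses to zero (using the highest-frequency role of $\phi_1$ in the bilinear factor) while each $N_j$, $j=2,\ldots,k+1$, carries precisely the exponent $s_c=\tfrac{d}{2}-\tfrac{1}{k}$, leaving the prefactor $(N_{k+1}/N_1 + 1/N_2)^\delta$ untouched. The admissibility hypotheses \eqref{admis1} are chosen exactly so that the required linear exponent $p$ falls within the range covered by Theorem \ref{THM:2}(i): $p>\tfrac{20}{3}$ when $d=2$ (forcing $k\ge 6$), $p>\tfrac{16}{3}$ when $d=3$ (forcing $k\ge 3$), and $p\ge 4$ when $d\ge 4$ (permitting $k\ge 2$).

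\textbf{Main obstacle.} The crux is the bilinear estimate of Step 1 in the irrational setting, where the standard number-theoretic tools (divisor counting, Jacobi theta identities, circle-method reductions) used on the rational torus are unavailable because the quadratic form $Q(\bn)=\sum_j \theta_j n_j^2$ has generic coefficients. The replacement is a purely geometric lattice-point argument: one shows that the joint level set $\{Q(\bn_1)+Q(\bn_2)=k,\ \bn_1+\bn_2=\bm\}$ is a thin, almost flat shell in a box of size $N_2$, and its intersection with $\Z^d$ is controlled by a volume estimate that is insensitive to the arithmetic nature of the $\theta_j$'s. Extracting both a $(N_2/N_1)^\delta$ and a $N_2^{-\delta}$ gain uniformly in the admissible range of $N_1,N_2$, and packaging them in the single prefactor $(N_{k+1}/N_1 + 1/N_2)^\delta$, is the delicate point, since the two regimes ($N_1\gg N_2$ and $N_1\sim N_2$) require quite different geometric inputs that must be interpolated carefully.
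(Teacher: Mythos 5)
Your proposal diverges from the paper's argument and has two genuine gaps.

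First, the Hölder step in your assembly does not close. If the bilinear block $P_{N_1}e^{-it\Delta}\phi_1\cdot P_{N_{k+1}}e^{-it\Delta}\phi_{k+1}$ is placed in $L^2_{t,\bx}$, then Hölder into $L^2_{t,\bx}$ forces the remaining $k-1$ factors into $L^\infty_{t,\bx}$ (your exponents $\tfrac12=\tfrac12+\tfrac{k-1}{q}$ with $q=2(k-1)$ actually land you in $L^1$, not $L^2$). Bernstein then costs $N_j^{d/2}=N_j^{s_c+1/k}$ for each of the $k-1$ middle factors, a loss of $\prod_{j=2}^{k}N_j^{1/k}$ that cannot be absorbed by the single gain $N_{k+1}^{-(1/2-1/k)}$ from the bilinear block (take $N_2=\cdots=N_k=N_1$ large and $N_{k+1}=1$). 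To make a bilinear-plus-Hölder scheme work one would need a bilinear estimate in some $L^r$ with $r>2$, or an interpolation that you have not set up.

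Second, the bilinear estimate of your Step 1 is asserted rather than proved, and on an irrational torus it is essentially as hard as the theorem itself. After Plancherel the constraint $|Q(\bn_1)+Q(\bm-\bn_1)-k|\lesssim 1$ becomes $Q(2\bn_1-\bm)\in[\ell-1,\ell+1]$, i.e.\ a thin ellipsoidal shell, and counting lattice points in such a shell is \emph{not} controlled by its volume: this is exactly the lattice-point problem the paper attacks via Weyl sums and the circle method (the sets $A_\ell$ in Proposition \ref{PROP:Str d3} and the level-set machinery of Section \ref{SEC:level}). The ``purely geometric, arithmetic-insensitive volume estimate'' you invoke is the crux, not a routine step. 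The paper avoids any bilinear estimate altogether: it decomposes the highest-frequency factor over strips $R_\l\in\mathcal{R}_M(N_2)$ of width $M=\max(N_2^2/N_1,1)$ orthogonal to the center of the $N_2$-cube, exploits almost orthogonality in the \emph{time} frequency of $P_{R_\l}P_{N_1}u_1$, and applies the refined linear Strichartz estimate of Lemma \ref{LEM:Gstr2} on each strip (which yields the $(M/N_2)^\delta$ gain by interpolating the sharp $L^p$ bound of Theorem \ref{THM:2} with Bernstein on the strip). The remaining factors are distributed over $L^p$, $L^{p_{d,k}}$, $L^q$ with exponents chosen so that a small loss in $N_2$ is traded against a small gain in $N_{k+1}$; this is where the hypotheses \eqref{admis1} enter. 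You would need to either supply a genuine proof of the bilinear estimate on irrational tori or adopt the strip-decomposition route.
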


\noi

In Subsection \ref{SUBSEC:G2},
we first present the proof of
Proposition \ref{PROP:LWP2}.
In Subsection \ref{SUBSEC:G3},
we then
use the multilinear Strichartz estimate \eqref{G1}
to prove Proposition \ref{PROP:XLWP},
thus yielding Theorem \ref{THM:4}.

\subsection{Multilinear Strichartz estimate} \label{SUBSEC:G2}
In this subsection,
we use
 the sharp $L^p$-Strichartz estimates \eqref{P1} in Theorem \ref{THM:2}
to prove the multilinear Strichartz estimate \eqref{G1}.
The main idea is to
refine the Strichartz estimate
by considering frequency scales
finer than the standard dyadic Littlewood-Paley localizations
as in   \cite{HTT11}.
See Lemma \ref{LEM:Gstr2}.

\begin{definition}\label{DEF:Gadmis} \rm
We say that $(d, p)\in \mathbb{N}\times \R$ is an admissible pair if
  \begin{align}\label{admis}
\textup{(i) } d = 2, \ \, p > \frac{20}{3}, \quad
\textup{(ii) }    d =3, \  \, p>\frac{16}3,  \quad \text{ or}
\quad   \textup{(iii) }  d  \ge 4,\ \, p> 4.
  \end{align}
\end{definition}

\noi
Note that, by Theorem \ref{THM:2},
the Strichartz estimates with $(d,p)$ in this range
are  sharp.

\medskip
Given dyadic $N\geq 1$,
let
 $\mathcal{C}_N$ denote the collection of cubes $C\subset \Z^d$ of side
 length $\sim N$ with arbitrary center and orientation.
 Then,  we can rewrite Theorem \ref{THM:2} in the following form.

\begin{lemma}\label{LEM:Gstr}
Let $(d, p)$ be admissible
and $I\subset \R$ be a bounded interval.
Then, for all dyadic $N\geq 1$, we have
  \begin{align}\label{G2}
    \|P_N e^{-it \Delta}\phi\|_{L^p_{t, \bx}(I\times \T^d)}
    \les N^{\frac{d}2-\frac{d+2}p} \|P_N \phi\|_{L^2_\bx(\T^d)}.
  \end{align}

\noi
More generally, for all $C \in \mathcal{C}_N$, we have
  \begin{equation}\label{G3}
    \|P_C e^{-it \Delta}\phi\|_{L^p_{t, \bx}(I\times \T^d)}
    \les N^{\frac{d}2-\frac{d+2}p} \|P_C \phi\|_{L^2_\bx (\T^d)}.
  \end{equation}
\end{lemma}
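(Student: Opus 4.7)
The plan is to derive both estimates from Theorem \ref{THM:2} (i), which gives the sharp scaling-invariant Strichartz bound for functions with Fourier support in an axis-aligned cube centered at the origin. The first estimate \eqref{G2} is essentially immediate: the Littlewood-Paley projector $P_N$ is supported in $\{\bn \in \Z^d : |\bn| \les N\} \subset [-CN, CN]^d$ for an absolute constant $C$, and the admissibility of $(d,p)$ in Definition \ref{DEF:Gadmis} is precisely the range covered by Theorem \ref{THM:2} (i). Applying that theorem to $P_N \phi$ (with $N$ replaced by $CN$) absorbs the constant into the implicit bound.

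For \eqref{G3}, the main tool is a discrete Galilean symmetry tailored to the quadratic symbol $Q(\bn) = \sum_{j=1}^d \theta_j n_j^2$. Fix an axis-aligned cube $C$ of side length $\sim N$ and choose any $\bn_0 \in C \cap \Z^d$; set $C_0 := C - \bn_0$, define $\psi$ by $\widehat{\psi}(\wt{\bn}) := \widehat{\phi}(\bn_0 + \wt{\bn})$ (equivalently, $\psi(\bx) = e^{-2\pi i \bn_0 \cdot \bx} \phi(\bx)$ after extension), and write $\pmb{v} = (\theta_1 n_{0,1}, \dots, \theta_d n_{0,d})$. The elementary expansion
\begin{equation*}
Q(\bn_0 + \wt{\bn}) = Q(\bn_0) + 2 \sum_{j=1}^d \theta_j n_{0,j}\, \wt{n}_j + Q(\wt{\bn})
\end{equation*}
yields the Galilean identity
\begin{equation*}
(e^{-it\Delta} P_C \phi)(\bx)
= e^{2\pi i (\bn_0 \cdot \bx + Q(\bn_0)\, t)}\, (e^{-it\Delta} P_{C_0} \psi)(\bx + 2t\, \pmb{v}).
\end{equation*}
The prefactor has modulus one, and for each fixed $t$ the translation $\bx \mapsto \bx + 2t\pmb{v}$ is a measure-preserving bijection of $\T^d = (\R/\Z)^d$, so $L^p_{t, \bx}$ norms are preserved. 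Combined with Plancherel, which gives $\|P_{C_0} \psi\|_{L^2_\bx} = \|P_C \phi\|_{L^2_\bx}$, this reduces the bound for $P_C$ centered at $\bn_0$ to the analogous bound with $C_0$ centered at (or very close to) the origin.

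To finish in the case of arbitrary orientation, I would simply enclose $C_0$ in an axis-aligned cube $\wt{C}$ of side length $\sim N$ sitting in $[-C'N, C'N]^d$. Since $P_{C_0} \psi$ has Fourier support in $\wt{C}$, it equals $P_{\wt C}(P_{C_0}\psi)$, and applying the already established estimate \eqref{G2} to this function (or, equivalently, citing Theorem \ref{THM:2} (i) directly on $[-C'N, C'N]^d$) concludes the proof. The only genuinely substantive step in this whole argument is the Galilean identity; every other move is a routine reduction, so there is no serious obstacle beyond bookkeeping with the constants $\theta_j$.
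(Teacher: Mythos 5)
Your proof is correct and is essentially the argument the paper has in mind: the paper omits the proof, presenting the lemma as a restatement of Theorem \ref{THM:2} (i) and citing \cite{BoPCMI, HTT11} for exactly this Galilean-invariance reduction, which shows the loss depends only on the size of the frequency support and not on its position. The identity you derive from $Q(\bn_0+\wt\bn) = Q(\bn_0) + 2\sum_j \theta_j n_{0,j}\wt n_j + Q(\wt\bn)$ is the correct adaptation to the anisotropic symbol, and the remaining reductions (choosing $\bn_0 \in C\cap\Z^d$, enclosing the recentered cube in $[-C'N,C'N]^d$, and invoking Theorem \ref{THM:2} (i)) are all sound.
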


\noi
The Strichartz estimate \eqref{G3}
shows that the loss in \eqref{G3}
depends only on the size of the frequency support, not the position.
See \cite{BoPCMI, HTT11}.

In order to exploit further orthogonality between different frequency pieces
under the linear Schr\"odinger evolution,
we need to decompose the frequency cubes $C_N$.
Let $\mathcal{R}_{M}(N)$ be the collection of all sets in $\Z^d$
which are given as the intersection of a cube of side length $2N$ with a strip of
``width'' $2 M$, i.e.~the collection of all sets of the form
\begin{equation}
  \big(\bn_0+[-N,N]^d\big)\cap \big\{ \bn \in \Z^d: |{\bf a} \cdot_{\pmb{\theta}} \bn -A|\leq M \big\}
\label{G3a}
\end{equation}
with some $\bn_0 \in \Z^d$, ${\bf a} \in \R^d$, $|{\bf a}|=1$, $A \in \R$.
Here,
 the dot product
 ${\bf a} \cdot_{\pmb{\theta}} \bn$ is given by
\begin{equation}
\ba\cdot_{\pmb{\theta}} \bn = \sum_{j=1}^d \theta_j a_j n_j,
\label{G3b}
\end{equation}

\noi
where $\pmb{\theta}  = (\theta_1, \dots, \theta_d)$ is as in \eqref{eq:Q}.
Then, we have the following refinement of the Strichartz estimate.

\begin{lemma}\label{LEM:Gstr2}
Let $(d, p)$ be admissible and  $I\subset \R$ be a bounded interval.
Then,
for all $1\leq M\leq N$ and $R \in \mathcal{R}_M(N)$,  we have
  \begin{equation}\label{G4}
    \|P_R e^{-it \Delta}\phi\|_{L^p_{t, \bx} (I\times \T^d)}
    \les N^{\frac{d}2-\frac{d+2}p} \left(\frac{M}{N}\right)^{\dl}\|P_R \phi\|_{L^2_\bx (\T^d)},
  \end{equation}
\end{lemma}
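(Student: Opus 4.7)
The plan is to derive the refined bound \eqref{G4} by interpolating two endpoint estimates for $P_R e^{-it\Delta}\phi$: a crude $L^\infty_{t,\bx}$ bound that exploits the small volume of the strip $R$, and the scaling-invariant Strichartz estimate \eqref{G3} at an intermediate admissible exponent $p_0 < p$. Since $(d,p)$ is admissible in the sense of Definition~\ref{DEF:Gadmis} (so $p$ lies strictly above the threshold $p_0^\ast \in \{\tfrac{20}{3},\tfrac{16}{3},4\}$), I may fix $p_0 \in (p_0^\ast, p)$ so that $(d,p_0)$ remains admissible.

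For the $L^\infty$ endpoint, the bounds $\tfrac{1}{C}\le \theta_j\le C$ from \eqref{eq:Q} make the twisted inner product \eqref{G3b} comparable to the ordinary Euclidean one, so $R$ lies inside a cube of side $\sim N$ intersected with a strip of Euclidean thickness $\sim M$, giving the lattice count $|R\cap\Z^d|\lesssim MN^{d-1}$. Cauchy--Schwarz on the spatial Fourier series at each fixed $t$ then produces
\begin{equation*}
\|P_R e^{-it\Delta}\phi\|_{L^\infty_{t,\bx}(I\times\T^d)} \les (MN^{d-1})^{\frac12}\|P_R\phi\|_{L^2_\bx(\T^d)}.
\end{equation*}
For the $L^{p_0}$ endpoint, since $R$ is contained in the cube $C := \bn_0+[-N,N]^d \in \mathcal{C}_N$, applying \eqref{G3} to the function $P_R\phi$ (and using $P_C P_R\phi = P_R\phi$) gives
\begin{equation*}
\|P_R e^{-it\Delta}\phi\|_{L^{p_0}_{t,\bx}(I\times\T^d)} \les N^{\frac{d}{2}-\frac{d+2}{p_0}}\|P_R\phi\|_{L^2_\bx(\T^d)}.
\end{equation*}

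With $\theta := p_0/p \in (0,1)$, log-convexity of $L^p$ norms then yields
\begin{equation*}
\|P_R e^{-it\Delta}\phi\|_{L^p_{t,\bx}} \les N^{\left(\frac{d}{2}-\frac{d+2}{p_0}\right)\theta}\,(MN^{d-1})^{\frac{1-\theta}{2}}\,\|P_R\phi\|_{L^2_\bx}.
\end{equation*}
A short exponent count (invoking $\theta/p_0 = 1/p$) shows this right-hand side equals $N^{\frac{d}{2}-\frac{d+2}{p}}(M/N)^{\delta}\|P_R\phi\|_{L^2_\bx}$ with $\delta = \tfrac{1-\theta}{2} = \tfrac{p-p_0}{2p} > 0$, which is exactly \eqref{G4}. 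No step poses a genuine obstacle: the Strichartz input is handed to us by Lemma~\ref{LEM:Gstr} (the allowance of arbitrary centers in $\mathcal{C}_N$ is crucial for applying it to the translated cube $\bn_0+[-N,N]^d$), and the Bernstein-type estimate is immediate once the lattice count is verified. The only mild point requiring attention is the comparability of the $\pmb\theta$-strip width to the Euclidean width, which is the reason the hypothesis $\tfrac{1}{C}\le\theta_j\le C$ from \eqref{eq:Q} is essential in this step.
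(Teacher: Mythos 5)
Your proof is correct and follows essentially the same route as the paper: the paper likewise interpolates the Bernstein-type bound $\|P_R e^{-it\Delta}\phi\|_{L^\infty_{t,\bx}} \les M^{1/2}N^{(d-1)/2}\|P_R\phi\|_{L^2_\bx}$ against the sharp Strichartz estimate \eqref{G3} at an exponent $q$ taken slightly above the admissibility threshold, and your exponent bookkeeping (with $\delta=(p-p_0)/(2p)$ approaching the stated upper limits as $p_0$ tends to the threshold) matches the ranges of $\delta$ given in the lemma.
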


\noi
where
 $0<\delta <\frac 12-\frac{10}{3p}$ when $ d= 2$,
 $0<\delta <\frac 12-\frac8{3p}$ when $ d= 3$,
 and $0<\dl <\frac 12- \frac 2p$ when $d \geq 4$.

\begin{proof}
By Bernstein's inequality,  we have
\begin{equation}\label{G5}
  \|P_R  e^{-it \Delta}\phi\|_{L^\infty_{t, \bx} (I \times \T^d)}
  \les M^{\frac12}N^{\frac{d-1}{2}} \|P_R \phi\|_{L^2_\bx (\T^d)}.
\end{equation}

\noi
Given admissible $(d, p)$,
write $\frac 1p = \frac {\theta}{q} + \frac{1-\theta}{\infty}$
for some $\theta \in [0, 1)$,
where $q < p$ is given by
$q = \frac{20}{3}+$ when $ d = 2$,
$q = \frac{16}{3}+$ when $ d = 3$,
and $q = 4+$ when $ d\geq 4$.
Then, \eqref{G4} follows
from interpolating \eqref{G3} (with  $p = q$)
and \eqref{G5}.
\end{proof}

Now, we are ready to prove the main  multilinear Strichartz estimates \eqref{G1}.

\begin{proof}[Proof of Proposition \ref{PROP:LWP2}]
Let $u_j = e^{-it \Dl} \phi_j$.
Then, by almost orthogonality in spatial frequencies, it suffices to prove
that there exists $\dl > 0$ such that
  \begin{align}
      \Big\|P_CP_{N_1}u_1\prod_{j=2}^{k+1}P_{N_j} u_j\Big\|_{L^2_{t, \bx}}
&       \les \left(\frac{N_{k+1}}{N_1}+\frac1{N_2}\right)^{\dl}\|P_C P_{N_1}\phi_1\|_{L^2_\bx}
\prod_{j=2}^{k+1}N^{s_c}_j \|P_{N_j}\phi_j\|_{L^2_\bx},
\label{G5a}
  \end{align}

\noi
for all cubes $C \in \mathcal{C}_{N_2}$.
Fix a cube $C \in \mathcal{C}_{N_2}$
and let $\bn_0$ be the center of $C$.
Partition $C = \bigcup R_\l$ into disjoint strips $R_\l$
with width $M=\max(N_2^2/N_1,1)$, which are all orthogonal to $\bn_0$
with respect to the dot product $\cdot_{\pmb{\theta}}$ in \eqref{G3b},
i.e.~$R_\l$ is given by
  \[
   R_\l=\big\{\bn\in C :\,\bn\cdot_{\pmb{\theta}} \bn_0 \in
   \big[|\bn_0| M \l, |\bn_0| M(\l+1)\big) \big\}, \qquad |\l| \sim \frac{N_1}{M}.
  \]

\noi
Note that we have $R_\l \in \mathcal{R}_M(N_2)$.
By writing
  \begin{equation}\label{G6}
   P_C P_{N_1} u_1  \prod_{j=2}^{k+1}P_{N_j} u_j
   = \sum_\l P_{R_\l} P_{N_1} u_1  \prod_{j=2}^{k+1}P_{N_j} u_j,
  \end{equation}

\noi
we show that the sum is almost orthogonal in $L^2_{t, \bx}$.
Since  $N_2^2 \lesssim M^2 \l$,
 we have
  \[
    Q(\bn_1) = \frac1{Q(\bn_0)}|\bn_1 \cdot_{\pmb{\theta}} \bn_0|^2
    + Q(\bn_1-\bn_0) - \frac1{Q(\bn_0)}|(\bn_1- \bn_0) \cdot_{\pmb{\theta}} \bn_0|^2
    = M^2\l^2 + O(M^2 \l),
  \]

\noi
for $\bn_1 \in R_\l$.
Note that the multiplication by the factor $\prod_{j=2}^{k+1}P_{N_j} u_j$
in \eqref{G6}
changes the time frequency  at most by
$O(N_2^2)$.
Hence,
$P_{R_\l} P_{N_1} u_1 \prod_{j=2}^{k+1}P_{N_j} u_j$
in \eqref{G6}
is localized at time frequency $M^2 \l^2 + O(M^2 \l) = O(M^2 \l^2)$
for each $\l$.
Therefore, the sum in \eqref{G6} is almost orthogonal
and we have
  \begin{equation} \label{G6a}
    \Big\|  P_C P_{N_1} u_1  \prod_{j=2}^{k+1}P_{N_j} u_j \Big\|_{L^2_{t, \bx}}^2
    \sim \sum_\l \bigg\|P_{R_\l} P_{N_1} u_1  \prod_{j=2}^{k+1}P_{N_j} u_j\bigg\|_{L^2_{t, \bx}}^2.
  \end{equation}

With $d$ and $k$ as in \eqref{admis1}, let
$p_{d,k}=(d+2)k$.
Then, by  Lemma \ref{LEM:Gstr}, we have
\begin{equation}\label{G7}
  \|P_N e^{-it \Delta}\phi\|_{L^{p_{d,k}}_{t, \bx}}
  \les N^{s_c} \|P_N \phi\|_{L^2_\bx}.
\end{equation}

\noi
Now,
choose $p$ such that
\begin{equation}\label{G8}
  \begin{split}
 \textup{(i)}&  \quad   \frac{20}3<p<\frac{8k}{k+1} \quad  \text{when } d =2, \\
 \textup{(ii)}&  \quad   \frac{16}3<p<\frac{20k}{3k+2} \quad  \text{when } d =3, \\
  \textup{(iii)}& \quad    4<p<\frac{4k(d+2)}{dk+2}\quad  \text{when  } d\ge 4.
  \end{split}
\end{equation}

\noi
The existence of such $p$ is implied by \eqref{admis1}.
Moreover, the lower bound on $p$ guarantees that each $(d,p)$ is admissible,
while the upper bound on $p$ guarantees that
\begin{align}\label{G9}
  d-\frac{2(d+2)}{p}-s_c <0.
\end{align}

\noi
 Let $q$ such that
\begin{align}\label{G10}
  \frac2p+\frac{k-2}{p_{d,k}}+\frac1q=\frac12.
\end{align}

\noi
Then, it follows from \eqref{G8}
that  $(d,q)$ is also admissible.
By H\"older's inequality and Lemmata \ref{LEM:Gstr} and \ref{LEM:Gstr2},
we have
\begin{align}
\bigg\|P_{R_\l} P_{N_1} u_1 & \prod_{j=2}^kP_{N_j} u_j\bigg\|_{L^2_{t, \bx}}
   \les\|P_{R_\l} P_{N_1} u_1 \|_{L^p} \|P_{N_2}u_2\|_{L^p} \prod_{j=3}^k
  \|P_{N_j}u_j\|_{L^{p_{d,k}}} \|P_{N_{k+1}}u_{k+1}\|_{L^q} \notag \\
&    \les N_2^{d-\frac{2(d+2)}{p}-s_c}
    N_{k+1}^{\frac d2- \frac{d+2}q -s_c}\bigg(\frac{M}{N_2} \bigg)^{\dl}
    \|P_{R_\l}P_{N_1}\phi_1\|_{ L^2_\bx}
\prod_{j=2}^{k+1} N^{s_{c}}_j \|P_{N_j}\phi_j\|_{L^2_\bx},
\label{G11}
\end{align}

\noi
for some $\dl > 0$.
In view of \eqref{G8} and \eqref{G9},
choose $p$
such that
\[  - d +\frac{2(d+2)}{p}+s_c = \delta. \]

\noi
Moreover, from \eqref{G9} and \eqref{G10}, we have
\begin{equation}
 \frac d2-\frac{d+2}q-s_c
 =   -d+\frac{2(d+2)}{p}+s_c = \dl> 0.
\label{G12}
\end{equation}

\noi
Then,
 noting $\frac{M}{N_2} \sim \frac{N_2}{N_1}+\frac1{N_2} $,
it follows from \eqref{G11} that
\begin{align*}
   \bigg\|  P_{R_\l} P_{N_1} u_1   \prod_{j=2}^{k+1}P_{N_j} u_j \bigg\|_{L^2_{t, \bx}}
  \les\bigg(
\frac{N_{k+1}}{N_1}+\frac1{N_2}\bigg)^{\delta}
  \|P_{R_\l} P_{N_1}\phi_1\|_{L^2_\bx}  \prod_{j=2}^{k+1}N^{s_c}_j \|P_{N_j}\phi_j\|_{L^2_\bx}.
\end{align*}

\noi
Finally,
by summing up the squares  in \eqref{G6a} with respect to $\l$,
we obtain \eqref{G5a} and hence \eqref{G1}.
This completes the proof of Proposition \ref{PROP:LWP2}.
\end{proof}

\subsection{Proof of Proposition \ref{PROP:XLWP}}
\label{SUBSEC:G3}

%
%
%
%
%

 First, we state and prove an auxiliary lemma (Lemma \ref{LEM:XmultiY}), using Proposition \ref{PROP:LWP2}.
 Let $\mathcal{C}_N$, $N \geq 1$,
 be the collection of cubes $C\subset \Z^d$ of side length $\sim N$ as before.
Let $(d, p)$ be  admissible in the sense of
 Definition \ref{DEF:Gadmis}.
 Then,
 it follows from  Lemma \ref{LEM:Gstr} with Lemma \ref{LEM:Xinterpolate} (i)
 that
   \begin{equation}\label{Y0}
    \|P_C e^{it \Delta}\phi\|_{L^p_{t, \bx} (I\times \T^d)}
    \les N^{\frac{d}2-\frac{d+2}p} \|P_C \phi\|_{U^p_\Dl L^2_\bx}
  \end{equation}

\noi
for all $C \in \mathcal{C}_N$.

\begin{lemma} \label{LEM:XmultiY}
Let $d$ and $k$ satisfy \eqref{admis1}.
Then,  there exists $\dl' > 0$ such that
  \begin{align}
     \bigg\|\prod_{j=1}^{k+1}P_{N_j} u_j\bigg\|_{L^2_{t, \bx}(I \times\T^d)}
     \les
     \bigg(\frac{N_{k+1}}{N_1}+\frac1{N_2}\bigg)^{\dl'}\|P_{N_1}u_1\|_{Y^0}
     \prod_{j=2}^{k+1}\|P_{N_j}u_j\|_{Y^{s_c}},
\label{Y1}
  \end{align}

\noi
for any interval $I \subset [0, 1]$
and
for all $N_1\ge N_2 \ge \cdots \ge N_{k+1}\ge 1$.
\end{lemma}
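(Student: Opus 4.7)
The plan is to run the proof of Proposition \ref{PROP:LWP2} verbatim, but with each application of a linear Strichartz estimate replaced by its transferred version via Lemma \ref{LEM:Xinterpolate}(i), and then to pass from the resulting $U^{p_j}_\Delta L^2$-norms to $V^2_\Delta L^2$-norms (hence to $Y^0$) through the embedding from Remark \ref{REM:UpVp}. The crucial point is that the H\"older exponents $p$, $p_{d,k}=(d+2)k$, and $q$ appearing in \eqref{G8}--\eqref{G10} inside the proof of Proposition \ref{PROP:LWP2} are all chosen strictly above the admissible Strichartz threshold from Definition \ref{DEF:Gadmis}, and in particular all satisfy $p_j>2$. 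Consequently each transferred Strichartz factor $\|P_{N_j}u_j\|_{U^{p_j}_\Delta L^2}$ can be majorised by $\|P_{N_j}u_j\|_{V^2_\Delta L^2}$ via the embedding $V^2_{\text{rc}}\hookrightarrow U^{p_j}$, at the cost only of a universal constant and, crucially, \emph{without logarithmic loss}.

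\textbf{Passage to $Y^0$.} Assembling the pieces, I would obtain the estimate \eqref{Y1} but with $\|P_{N_j}u_j\|_{V^2_\Delta L^2}$ in place of the $Y^0$- and $Y^{s_c}$-norms on the right. The final conversion uses Plancherel and the trivial interchange of supremum and sum: for any $v$ with Fourier support in a dyadic annulus $\{|\bn|\sim N\}$,
\[
\|v\|_{V^2_\Delta L^2}^2 = \sup_{\{t_k\}}\sum_{k}\sum_{|\bn|\sim N}\big|e^{-it_kQ(\bn)}\widehat v(\bn,t_k)-e^{-it_{k-1}Q(\bn)}\widehat v(\bn,t_{k-1})\big|^2 \leq \|v\|_{Y^0}^2.
\]
Combined with $\|P_{N_j}u_j\|_{Y^{s_c}}\sim N_j^{s_c}\|P_{N_j}u_j\|_{Y^0}$ for dyadically localized pieces, this produces exactly \eqref{Y1} with the same exponent $\delta'=\delta$ as in Proposition \ref{PROP:LWP2}.

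\textbf{Main obstacle.} The only real concern is whether the almost-orthogonality decomposition into strips $R_\ell\in\mathcal{R}_M(N_2)$ used in \eqref{G6}--\eqref{G6a} remains valid for general elements of $V^2_\Delta L^2$ rather than just linear solutions. This is not a genuine obstruction, since the almost-orthogonality there is driven purely by the disjoint time-frequency localization of each product $P_{R_\ell}P_{N_1}u_1\prod_{j\ge 2}P_{N_j}u_j$ near the time-frequency $\sim M^2\ell^2$, a property that is preserved for any function with the relevant spatial Fourier support. A secondary safety net is that, should any exponent $p_j=2$ ever arise, one would fall back on the interpolation statement in Lemma \ref{LEM:Xinterpolate}(ii), picking up a factor of $(\log N_{\max})^{k+1}$ which could be absorbed by running Proposition \ref{PROP:LWP2} with $2\delta'$ in place of $\delta'$ and splitting the gain $(\frac{N_{k+1}}{N_1}+\frac{1}{N_2})^{2\delta'}$ into $(\frac{N_{k+1}}{N_1}+\frac{1}{N_2})^{\delta'}\cdot(\frac{N_{k+1}}{N_1}+\frac{1}{N_2})^{\delta'}$; here, however, this interpolation detour is avoided entirely because every exponent in the underlying Strichartz estimates used in the proof of Proposition \ref{PROP:LWP2} is strictly greater than $2$.
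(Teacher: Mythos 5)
Your proposal has a genuine gap, and it sits exactly at the point you flag as ``the only real concern'' and then dismiss. The almost-orthogonality in \eqref{G6}--\eqref{G6a} is \emph{not} preserved for general elements of $U^{p}_\Dl L^2$ or $V^2_\Dl L^2$: the temporal frequency localization of $P_{R_\l}P_{N_1}u_1$ at $M^2\l^2+O(M^2\l)$ is a consequence of $u_1$ being a free solution, whose space-time Fourier transform is carried by the characteristic surface $\{\tau=Q(\bn)\}$. A general function with spatial Fourier support in $R_\l$ has unconstrained modulation $\tau-Q(\bn)$, so the summands in \eqref{G6} indexed by $\l$ need not be almost orthogonal in $L^2_{t,\bx}$ (even for a single $U^p$-atom the sharp time cutoffs spread the temporal frequency). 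Moreover, after squaring one would need $\sum_\l\|P_{R_\l}P_{N_1}u_1\|_{U^{p}_\Dl L^2}^2\les\|P_CP_{N_1}u_1\|_{U^{p}_\Dl L^2}^2$, which holds for $U^2$ and $V^2$ (this is built into the definition of $Y^s$) but fails for $U^{p}$ with $p>2$. Without the strip decomposition your argument only yields the paper's estimate \eqref{Y4}, whose gain is $(N_{k+1}/N_2)^{\dl}$; this is strictly weaker than \eqref{Y1} and gives no decay at all when, say, $N_2=\cdots=N_{k+1}$ is bounded and $N_1$ is large.

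The paper's actual mechanism is the ``interpolation detour'' you declare unnecessary. The full-strength factor $\big(\frac{N_{k+1}}{N_1}+\frac1{N_2}\big)^{\dl}$ is obtained only in $U^2_\Dl L^2$ (estimate \eqref{Y3}), by applying the transference principle of Lemma \ref{LEM:Xinterpolate}~(i) to the entire multilinear estimate of Proposition \ref{PROP:LWP2}; since the target norm there is $L^2_{t,\bx}$, transference necessarily produces $U^2$-norms, and $V^2_{\textup{rc}}$ does \emph{not} embed into $U^2$. The crude H\"older bound \eqref{Y4} is then proved separately with $U^{p}$-norms, $p>2$, and Lemma \ref{LEM:Xinterpolate}~(ii) interpolates the two to reach $V^2$ (hence $Y^0$), the resulting logarithmic factor being absorbed by taking $\dl'<\dl$. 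Your observation that the individual linear Strichartz exponents exceed $2$ is correct but beside the point: it explains why \eqref{Y4} passes to $V^2$ for free, not why \eqref{Y3} does.
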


 \begin{proof}
 By almost orthogonality in spatial frequencies,
 it suffices to prove that there exists $\dl' > 0$ such that
  \begin{align}
     \bigg\|P_CP_{N_1} u_1 \prod_{j=2}^{k+1}P_{N_j} u_j\bigg\|_{L^2_{t, \bx}}
     \les
     \bigg(\frac{N_{k+1}}{N_1}+\frac1{N_2}\bigg)^{\dl'}\|P_C P_{N_1}u_1\|_{Y^0}
     \prod_{j=2}^{k+1} N_j^{s_c}\|P_{N_j}u_j\|_{Y^{0}},
\label{Y2}
  \end{align}

\noi
for all cubes $C \in \mathcal{C}_{N_2}$.
Moreover, by the embedding
\eqref{X3}, it suffices to prove
\eqref{Y2}, where the $Y^0$-norm is replaced by the $V^2_\Dl L^2$-norm.
Furthermore,
it suffices to prove
that there exists $\ld > 0$ such that the following two estimates hold:
  \begin{align}
\text{LHS of \eqref{Y2} }
     & \les
     \bigg(\frac{N_{k+1}}{N_1}+\frac1{N_2}\bigg)^{\dl}\|P_C P_{N_1}u_1\|_{U^2_\Dl L^2}
     \prod_{j=2}^{k+1} N_j^{s_c}\|P_{N_j}u_j\|_{U^2_\Dl L^2},
\label{Y3}
\intertext{and}
\text{LHS of \eqref{Y2} }
     & \les
     \bigg(\frac{N_{k+1}}{N_2}\bigg)^{\dl}\|P_C P_{N_1}u_1\|_{U^p_\Dl L^2}
     \prod_{j=2}^{k+1} N_j^{s_c}\|P_{N_j}u_j\|_{U^p_\Dl L^2},
\label{Y4}
\end{align}

\noi
for some $ p > 2$.
Indeed, if \eqref{Y3} and \eqref{Y4} hold,
then
it follows from Lemma \ref{LEM:Xinterpolate} (ii)
that
 \eqref{Y2} holds with $\dl' < \dl$.

The first estimate \eqref{Y3} directly follows
from Proposition \ref{PROP:LWP2} and
Lemma \ref{LEM:Xinterpolate} (i).
Hence, it remains to prove the second estimate \eqref{Y4}.
Let $p$, $p_{d, k}$, and $q$ be as in the proof of Proposition \ref{PROP:LWP2}.
Then,
by H\"older's inequality with \eqref{Y0},
we have
\begin{align}
\bigg\|P_C P_{N_1} u_1 & \prod_{j=2}^kP_{N_j} u_j\bigg\|_{L^2_{t, \bx}}
   \les\|P_C P_{N_1} u_1 \|_{L^p} \|P_{N_2}u_2\|_{L^p} \prod_{j=3}^k
  \|P_{N_j}u_j\|_{L^{p_{d,k}}} \|P_{N_{k+1}}u_{k+1}\|_{L^q} \notag \\
&    \les N_2^{d-\frac{2(d+2)}{p}-s_c}
    N_{k+1}^{\frac d2- \frac{d+2}q -s_c} \|P_C P_{N_1} u_1 \|_{U^p_\Dl L^2}
\prod_{j=2}^{k+1} N^{s_{c}}_j \|P_{N_j}u_j\|_{U^{q_j}_\Dl L^2},
\label{Y5}
\end{align}

\noi
where $q_2 = p$, $q_j = p_{d, k}$ for $j = 3, \dots, k$, and $q_{k+1} = q$.
From \eqref{G8} and  \eqref{G10} with $p_{d, k} = (d+2) k$,
we have $q > p_{d, k} > p > 2$.
Therefore, \eqref{Y4}
follows from \eqref{Y5}
with  Remark \ref{REM:UpVp} and \eqref{G12}.
 \end{proof}

We conclude this section by presenting the proof of  Proposition \ref{PROP:XLWP}.

\begin{proof}[Proof of Proposition \ref{PROP:XLWP}]

Let $I = [0, T)$.
In the following, we prove
  \begin{align*}
     \Bigg\| \mathcal{I}
  \Bigg( \mathbb{P}_{\le N} \bigg( \prod_{j=1}^{2k+1}u^*_j\bigg) \Bigg)\Bigg\|_{X^{s_c}(I)}
     \les
     \sum_{j = 1}^{2k+1} \bigg(\|u_j\|_{X^s(I)}
     \prod_{\substack{ \l = 1\\ \l \ne j}}^{2k+1} \|u_\l\|_{X^{s_c}(I)}\bigg)
  \end{align*}

\noi
for all $N\geq 1$, where the implicit constant is independent of $N$.
By Lemma \ref{LEM:Xlinear}, we have
  \begin{align}
     \Bigg\| \mathcal{I}
  \Bigg( \mathbb{P}_{\le N} \bigg( \prod_{j=1}^{2k+1}u^*_j\bigg) \Bigg)\Bigg\|_{X^{s_c}(I)}
%
     \le
\sup_{\substack{v \in Y^{-s}([0, T))\\\|v\|_{Y^{-s} = 1}}}
 \bigg| \int_{I \times \T^d} \prod_{j=1}^{2k+1}u^*_j(\bx, t) \mathbb{P}_{\le N} \cj{v(\bx, t)} d\bx dt\bigg|.
\label{Y6}
  \end{align}

\noi
Hence, with $u_0 = \mathbb{P}_{\le N }v$,  it suffices to show that
\begin{align}
 \bigg| \int_{I \times \T^d} \prod_{j=0}^{2k+1}u^*_j(\bx, t)  d\bx dt\bigg|
 \les \|u_0\|_{Y^{-s}(I)}
     \sum_{j = 1}^{2k+1} \bigg(\|u_j\|_{X^s}
     \prod_{\substack{ \l = 1\\ \l \ne j}}^{2k+1} \|u_\l\|_{X^{s_c}}\bigg).
\label{Y7}
  \end{align}

Now, dyadically decompose $u_j^* = \sum_{N_j \geq 1} P_{N_j} u_j^* $.
Without loss of generality, assume
$N_1 \geq N_2 \geq \cdots \geq N_{2k+1}$.
Then, in order to have a non-trivial contribution
on the left-hand side of  \eqref{Y7},
we must have $N_1 \sim \max(N_0, N_2)$.

\medskip

\noi
{\bf Case (i):} $N_0 \sim N_1$.

By Lemma \ref{LEM:XmultiY}, we have
\begin{align}
  \bigg| & \int_{I \times \T^d}  \prod_{j=0}^{2k+1}  P_{N_j} u^*_j(\bx, t)  d\bx dt\bigg|
  \leq
 \bigg\|\prod_{j=0}^{k}P_{N_{2j}} u^*_{2j}\bigg\|_{L^2_{t, \bx}}
 \bigg\|\prod_{j=0}^{k}P_{N_{2j+1}} u^*_{2j+1}\bigg\|_{L^2_{t, \bx}} \notag\\
& \les    \bigg(\frac{N_{2k}}{N_0}+\frac1{N_2}\bigg)^{\dl'}
 \bigg(\frac{N_{2k+1}}{N_1}+\frac1{N_3}\bigg)^{\dl'}
\| P_{N_0} u_0\|_{Y^{-s}} \| P_{N_1} u_1\|_{Y^{s}}    \prod_{j=2}^{2k+1}\|P_{N_j}u_j\|_{Y^{s_c}}.
 \label{Y8}
  \end{align}

\noi
Summing \eqref{Y8} over dyadic blocks $N_0\sim N_1\geq   N_2 \geq \cdots \geq N_{2k+1}$
and by Cauchy-Schwarz inequality,
we have
\begin{align*}
\text{LHS of \eqref{Y7} }
& \les \sum_{N_0 \sim N_1} \| P_{N_0} u_0\|_{Y^{-s}} \| P_{N_1} u_1\|_{Y^{s}}
\prod_{j=2}^{2k+1}\|u_j\|_{Y^{s_c}} \notag \\
& \les \|  u_0\|_{Y^{-s}} \|  u_1\|_{Y^{s}}
\prod_{j=2}^{2k+1}\|u_j\|_{Y^{s_c}},
  \end{align*}

\noi
yielding  \eqref{Y7} in view of \eqref{X3}.

\medskip

\noi
{\bf Case (ii):} $N_2 \sim N_1 \gg N_0$.

By Lemma \ref{LEM:XmultiY} with $N_1\sim N_2$, we have
\begin{align}
  \bigg|  \int_{I \times \T^d} &  \prod_{j=0}^{2k+1}  P_{N_j} u^*_j(\bx, t)  d\bx dt\bigg|
\notag\\
& \les
\bigg(\frac{N_0}{N_1}\bigg)^{s+s_c}
 \bigg(\frac{N_{2k+1}}{N_1}+\frac1{N_3}\bigg)^{\dl'}
\| P_{N_0} u_0\|_{Y^{0}} \| P_{N_1} u_1\|_{Y^{s}}    \prod_{j=2}^{2k+1}\|P_{N_j}u_j\|_{Y^{s_c}}
 \label{Y9}
  \end{align}

\noi
Summing \eqref{Y9} over dyadic blocks
$N_0 (\ll N_1)$ and $ N_1 \sim N_2 \geq N_3 \geq \cdots \geq N_{2k+1}$
and by Cauchy-Schwarz inequality,
we have
\begin{align*}
\text{LHS of \eqref{Y7} }
& \les
\| u_0\|_{Y^{-s}}
\sum_{N_1 \sim N_2}  \| P_{N_1} u_1\|_{Y^{s}}     \| P_{N_2} u_2\|_{Y^{s_c}}
\prod_{j=3}^{2k+1}\|u_j\|_{Y^{s_c}} \notag \\
& \les \|  u_0\|_{Y^{-s}} \|  u_1\|_{Y^{s}}
\prod_{j=2}^{2k+1}\|u_j\|_{Y^{s_c}}.
  \end{align*}

\noi
This completes the proof of Proposition \ref{PROP:XLWP}.
\end{proof}

\appendix

\section{On the Weyl sum estimate \eqref{A6}}\label{SEC:A}
%
%
%
%

In this appendix, we present a proof of \eqref{A6}.
We decided to include the proof for the convenience of readers, in particular for those in PDEs.
Since $I = I(\eta)$ is a compact interval and the integrand is periodic with period 1,
it suffices to show
\begin{equation}
\int_0^{1} | F(t) |^{r}dt \sim N^{r-2},
\label{Z1}
\end{equation}

\noi
for $ r > 4$,
where $F(t)$ is the Weyl sum defined by
\begin{align*}
F(t) =  \sum_{0\le n\le N} e^{2\pi i n^2 t}.
\end{align*}

For  $a, q \in \mathbb{N}$
with $1 \leq a \leq q \leq N$ and $(a, q) = 1$,
define a major arc $\mathfrak{M}(q, a)$ by
\begin{equation}
\mathfrak{M}(q, a)
= \bigg\{ t \in [0, 1]: \, \Big\|t - \frac{a}{q}\Big\| \leq \frac{1}{100N^2}\bigg\},
\label{Z1a}
\end{equation}

\noi
where $\|x\| = \min_{n\in \Z}|x - n|$ denotes the distance
of $x$ to the closest integer as before.
Let $\mathfrak{M} = \bigcup_{a, q} \mathfrak{M}(q, a)$.
Note that  we have
$\big\|t - \frac{a}{q}\big\| \leq \frac{1}{q^2}$ for $t \in \mathfrak{M}$.
Then, by
Weyl's inequality 
we have
\begin{equation}
 |F(t) | \les
\frac{N}{q^\frac{1}{2}} + N^\frac{1}{2}(\log q)^\frac{1}{2} + q^\frac{1}{2}(\log q)^\frac{1}{2}.
\label{Z2}
\end{equation}

\noi
Hence, the contribution from the major arc $\mathfrak{M}$ is estimated by
\begin{align*}
\int_\mathfrak{M} | F(t) |^{r} dt
\les \sum_{q = 1}^N \sum_{\substack{a=1\\(a, q) = 1}}^q\frac{N^r}{q^\frac{r}{2}}(\log q)^\frac{r}{2} \frac{1}{N^2}
\leq  N^{r-2} \sum_{q = 1}^N q^{1 - \frac r2+}
\les  N^{r-2},
\end{align*}

\noi
since $r > 4$.

\begin{remark} \rm
Indeed,  the contribution from the major arc $\mathfrak{M}$
provides the lower bound in \eqref{Z1}.
We only need to consider the contribution from $\mathfrak{M}(q, a)$
for odd $q \leq N^\frac{1}{2}$.
Let $S(q, a)$ be the Gauss sum
given by
$S(q, a) = \sum_{n = 1}^q e^{2\pi i n^2 \frac aq}$.
We have $S(q, a) = \sqrt{q}$ for odd $q$.
Now, suppose that $q$ is odd such that $1\leq q \leq N^\frac{1}{2}$.
Then, by Van der Corput's method \cite{Vi} with $S(q, a) = \sqrt{q}$,
one can show that $|F(t)|  \gtrsim \frac{N}{q^\frac{1}{2}}$
for $t \in \mathfrak{M}(q, a)$.
Noting that $\mathfrak{M}(q, a)$ are disjoint, we have
\begin{align*}
\int_\mathfrak{M} | F(t) |^{r} dt
\gtrsim \sum_{\substack{q = 1\\q,  \text{odd}}}^{N^\frac{1}{2}}
 \sum_{\substack{a=1\\(a, q) = 1}}^q\frac{N^r}{q^\frac{r}{2}}
 \frac{1}{N^2}
=  N^{r-2} \sum_{\substack{q = 1\\q,  \text{odd}}}^{N^\frac{1}{2}} \phi(q) q^{ - \frac r2}
\sim  N^{r-2},
\end{align*}

\noi
where
$\phi(q)$ denotes Euler's function.
This shows the lower bound in \eqref{Z1}.
\end{remark}

Next, we estimate the contribution from the minor
arc $\mathfrak{m} : = [0, 1] \setminus \mathfrak{M}$.
Fix small $\eps > 0$.
Then, Dirichlet's theorem \cite[Lemma 2.1]{V}
states that given $t \in [0, 1]$,
there exist integers $a, q$
with $1 \leq a \leq q \leq  N^{2-\eps}$ and  $(a, q) = 1$
such that $\big\|t - \frac{a}{q}\big\| \leq \frac{1}{qN^{2-\eps}}$.
Define $I(q)$ by
\[ I(q) = \bigcup_{\substack{a = 1\\(a, q) = 1}}^q I(q, a), \]

\noi
where
  $I(q, a) = \big\{ t\in [0, 1]:\,  \big\|t - \frac{a}{q}\big\| \leq \frac{1}{qN^{2-\eps}} \big\}$.
Now, in view of \eqref{Z1a},
divide the minor arc  $\mathfrak{m}$ into two pieces:
$ \mathfrak{m} = \mathfrak{m}_1 \cup \mathfrak{m}_2$,
where
\[ \mathfrak{m}_1 = \mathfrak{m}\cap
\bigcup_{ 1\leq  q \ll N^\eps} I(q)
\quad \text{and}
\quad
\mathfrak{m}_2 = \mathfrak{m}\cap \bigcup_{ N < q \leq  N^{2-\eps}} I(q).\]


Let $ t \in \mathfrak{m}_2$.
From \eqref{Z2}, we have  $|F(t)|\les N^{1-\frac{1}{2}\eps} (\log N)^\frac{1}{2}$.
Then, by Hua's inequality \cite[Lemma 2.5]{V}, we have
\begin{align*}
\int_{\mathfrak{m}_2} | F(t) |^{r} dt
& \leq \Big(\sup_{t \in \mathfrak{m}_2}|F(t)|\Big)^{r-4}
\int_0^1 | F(t) |^4 dt
\les \big[ N^{1-\frac{1}{2}\eps} (\log N)^\frac{1}{2}\big]^{r-4}
N^{2+} \\
& \leq N^{r-2}.
\end{align*}

Let $t \in \mathfrak{m}_1$, i.e.~we have  $\big\|t - \frac{a}{q}\big\| \leq \frac{1}{qN^{2-\eps}}$
for some $ q \ll N^\eps$.
Then, by Lemmata 2.7 and 2.8 in \cite{V} with $|S(q, a)| \les q^\frac{1}{2}$, we have
\begin{equation*}
|F(t)| =   q^{-1} S(q, a) v\big( t- \tfrac aq\big)+ O(N^\frac{2}{200})
\les N^{1-\frac{\eps}{2}},
\end{equation*}

\noi
where $v$ is defined in \cite[(2.9)]{V}.
Applying Hua's inequality as before, we have
\begin{align*}
\int_{\mathfrak{m}_1} | F(t) |^{r} dt
& \leq \Big(\sup_{t \in \mathfrak{m}_1}|F(t)|\Big)^{r-4}
\int_0^1 | F(t) |^4 dt
\les \big[N^{1-\frac{\eps}{2}} \big]^{r-4}
N^{2+}
 \leq N^{r-2}.
\end{align*}

\noi
This completes the proof of \eqref{Z1}.

\section{On a partially irrational torus $\T^2 \times \T_{\al_3}$}
\label{SEC:B}

In this appendix, we present a sketch of the proof of Theorem \ref{THM:5}.
Thus, we set $d = 3$ and assume that \eqref{eq:Q1} holds in the following.
The main ingredient is an improvement of the Strichartz estimate
on a partially irrational torus.

\begin{proposition}\label{PROP:Bstrichartz}
Let $d = 3$ and $I$ be a compact interval in $\R$.
Suppose that \eqref{eq:Q1} holds.
Then, the  scaling-invariant Strichartz estimate \eqref{P1} holds
for $p > \frac{14}3$.
\end{proposition}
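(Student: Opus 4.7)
The plan is to revisit the proof of Proposition~\ref{PROP:level}~(i) in Subsection~\ref{SUBSEC:PROP1} under the hypothesis~\eqref{eq:Q1}, and then to convert the sharpened level set estimate thus obtained into the desired Strichartz bound via a layer-cake argument combined with the three-dimensional $L^4$-Strichartz estimate of Theorem~\ref{THM:1}~(ii). The driving observation is that when $\theta_1 = \theta_2 = 1$, the kernel factors as $\RR(\bx,t) = R_1(x_1,t)\, R_1(x_2,t)\, R_{\theta_3}(x_3,t)$, so that Weyl's inequality applies to \emph{two} of the three factors simultaneously on the support of the arithmetic cutoff $\Phi(t)$, whereas the proof of Proposition~\ref{PROP:level}~(i) only exploits this for one such factor.

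The first step is to redo the pointwise bound \eqref{C3} on the arithmetic piece $\RR_1$. Since $\Phi$ is supported where $\|t - \tfrac{a}{q}\| \les q^{-2}$ with $q \sim M \ges N$, Weyl's inequality \eqref{C2b} gives $|R_1(x_j, t)| \les M^{1/2}(\log M)^{1/2}$ for \emph{both} $j = 1, 2$---the condition $\theta_j = 1$ is precisely what transports the Diophantine hypothesis on $t$ into a bound on $R_{\theta_j}$---while the remaining factor $R_{\theta_3}(x_3, t)$ is controlled trivially by $N$. This improves \eqref{C3} to $\|\RR_1\|_{L^\infty_{t,\bx}} \les \min(NM^{1+\eps_1},\, N^3)$. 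The Fourier-side bound \eqref{C5} on $\RR_2$ is unchanged, since it depends only on the $t$-structure of $\Phi$ through Ramanujan-sum estimates. Substituting into \eqref{C6} and optimizing by $NM^{1+\eps_1} \sim \ld^2$ (subject to $M \ges N$) then produces the improved level set estimate
\[ |A_\ld| \les N^{1 + O(\eps)}\, \ld^{-4 + O(\eps)}, \qquad \ld \ges N^{1 + O(\eps)}. \]

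The final step is to close the layer-cake argument. Setting $F = e^{-it\Dl} f$ with $\|f\|_{L^2} = 1$ and splitting $\int \ld^{p-1} |A_\ld|\,d\ld$ at the threshold $\ld_0 := N^{1+\eps}$, the high-frequency piece is bounded by the improved level set estimate as
\[ \int_{\ld_0}^{N^{3/2}} \ld^{p-1} |A_\ld|\, d\ld \les N^{3p/2 - 5 + O(\eps)} \]
(using $p > 4$ for integrability), while the low-frequency piece is dominated by $\ld_0^{p-4}\, \|F\|_{L^4}^4 \les N^{(p-4)(1+\eps) + 4/3 + \eps}$ via Theorem~\ref{THM:1}~(ii). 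A short algebraic check shows $(p-4) + 4/3 \leq 3p/2 - 5$ if and only if $p \geq 14/3$, with equality only at $p = 14/3$; hence for $p > 14/3$ the inequality is strict and the $O(\eps)$-losses can be absorbed by taking $\eps$ small, yielding the scaling-invariant estimate \eqref{P1}. The main point to verify is that the sharpened bound on $\RR_1$ does not come at the price of any loss in the Fourier-side bound on $\RR_2$; this is ensured because the decomposition $\RR = \RR_1 + \RR_2$ in \eqref{C2a} is multiplicative in the $t$-variable alone, so the estimate of $\widehat \RR_2$ via Fourier-series expansion of $\Phi$ and Ramanujan sums is insensitive to how many of the spatial factors receive a Weyl improvement.
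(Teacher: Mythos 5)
Your first step --- exploiting that $\theta_1=\theta_2=1$ lets Weyl's inequality control two of the three factors of $\RR$ on the support of $\Phi$, improving \eqref{C3} to $\|\RR_1\|_{L^\infty_{t,\bx}}\les\min(NM^{1+},N^3)$ and hence yielding $|A_\ld|\les N^{1+O(\eps)}\ld^{-4+O(\eps)}$ for $\ld\ges N^{1+\eps}$ --- is exactly the paper's Lemma \ref{LEM:level2}~(i), and your observation that the $\ft\RR_2$ bound \eqref{C5} is untouched is also correct. The gap is in the conversion to the Strichartz estimate. In the high-frequency piece you integrate the level set bound all the way up to $\ld\sim N^{3/2}$, and the $O(\eps)$ losses in \emph{both} exponents then contribute a factor $N^{c\eps}$ multiplying $N^{\frac{3p}{2}-5}$: explicitly, $\int_{N^{1+\eps}}^{N^{3/2}}\ld^{p-1}\cdot N^{1+O(\eps)}\ld^{-4+O(\eps)}\,d\ld \sim N^{\frac{3p}{2}-5+c\eps}$ with $c>0$. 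This loss sits on top of a term whose main exponent already \emph{equals} the target $\frac{3p}{2}-5$, so the slack coming from $p>\frac{14}{3}$ (which only enters the comparison of the low-frequency piece $N^{(p-4)(1+\eps)+\frac43+}$ against the target) cannot absorb it. For any fixed $\eps>0$ you obtain only $C_\eps N^{\frac{3p}{2}-5+c\eps}$, i.e.\ the $\eps$-loss estimate \eqref{P2}, not the scaling-invariant estimate \eqref{P1}.

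The missing ingredient is a \emph{second}, loss-free level set estimate valid near the top of the range: the paper's Lemma \ref{LEM:level2}~(ii) gives $|A_\ld|\les N^{\frac32 q-5}\ld^{-q}$ for $q>4$ and $\ld\ges N^{\frac32-\eps_2}$, with no $\eps$ in the exponents. The layer-cake argument is then split into \emph{three} regions: on $[N^{\frac32-\eps_2},N^{\frac32}]$ one uses part (ii) to get exactly $N^{\frac{3p}{2}-5}$ (taking $4<q<p$); on $[N^{1+\eps_1},N^{\frac32-\eps_2}]$ one uses part (i), where the truncated upper limit produces a gain $N^{-\eps_2(p-4)}$ that absorbs the $O(\eps_1)$ losses once $\eps_1$ is chosen small relative to $\eps_2$; and below $N^{1+\eps_1}$ one uses the $L^4$ bound as you do. Proving part (ii) is the bulk of Appendix \ref{SEC:B}: it requires rerunning the machinery of Subsection \ref{SUBSEC:PROP2} (the decomposition $\K=\Ld+(\K-\Ld)$, Lemma \ref{LEM:Weyl}, the divisor-counting Lemma \ref{LEM:divisor}, and the three-way splitting $\Ld=\Ld_1+\Ld_2+\Ld_3$) with the kernel $\K$ now containing two Fej\'er-regularized Weyl sums, which is what improves the threshold from $q>6$ to $q>4$. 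Without this component the argument does not close at the claimed exponent.
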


\noi
The proof of Proposition \ref{PROP:Bstrichartz}
is based on the following level set estimates
under the assumption \eqref{eq:Q1}.

\begin{lemma}\label{LEM:level2}
Suppose that \eqref{eq:Q1} holds.
Given  a compact interval $I \subset \R$
and  $f$ as in \eqref{C0},
let $A_\ld = A_\ld(f)$ be the distribution function defined by \eqref{C00a}.

\noi
\textup{(i)}
For any $\eps > 0$, we have
\begin{equation}
|A_\ld |
\les N^{\frac{1}{1+4\eps} } \ld^{-4 + \frac{8}{1+4\eps}\eps}
\label{BB1}
\end{equation}

\noi
for $\ld \ges N^{1+\eps}$.

\medskip
\noi
\textup{(ii)}
Let  $ q > 4$.
Then, there exists small $\eps >0$ such that
\begin{equation}
|A_\ld |\les N^{\frac{3}{2} q - 5} \ld^{-q}
\label{BB2}
\end{equation}

\noi
for $\ld \ges N^{\frac{3}{2}-\eps}$.

In \eqref{BB1} and \eqref{BB2},
 the implicit constants depend on $\eps > 0$, $q >4$, and $|I|$,
but are independent of $f$.

\end{lemma}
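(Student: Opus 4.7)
The plan is to follow the proofs of Proposition \ref{PROP:level} in Subsections \ref{SUBSEC:PROP1} and \ref{SUBSEC:PROP2} verbatim, upgrading the one-dimensional Weyl bound on the $x_1$-factor to a joint two-dimensional bound on the $(x_1, x_2)$-factors. The assumption \eqref{eq:Q1} (namely $\theta_1 = \theta_2 = 1$) is precisely what enables this: the kernel factors as
\[
\mathbf{R}(\bx, t) = R_1(x_1, t)\, R_1(x_2, t)\, R_{\theta_3}(x_3, t),
\]
where $R_\theta(x, t) = \sum_{|n| \leq N} e^{2\pi i (nx + \theta n^2 t)}$, so one can apply Weyl's inequality independently to the first two Weyl-sum factors while using only the trivial bound $\|R_{\theta_3}(\cdot, t)\|_{L^\infty_{x_3}} \leq 2N + 1$ on the irrational factor.

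For (i), I would rebuild $\Phi(t)$, $\mathbf{R}_1$, and $\mathbf{R}_2$ exactly as in Subsection \ref{SUBSEC:PROP1} with $M \geq N$. Applying Weyl's inequality to each of $R_1(x_j, t)$, $j = 1, 2$, on the support of $\Phi$ gives $|R_1(x_j, t)| \lesssim M^{1/2}(\log M)^{1/2}$, so that
\[
\|\mathbf{R}_1\|_{L^\infty_{t, \bx}} \lesssim N \cdot M \, (\log M)
\]
in place of \eqref{C3}. The Ramanujan-sum bound on $\widehat{\mathbf{R}}_2$ is insensitive to the spatial factorisation and is unchanged. Inserting these bounds into the Cauchy--Schwarz chain \eqref{C5b}--\eqref{C6} produces
\[
\ld^2 |A_\ld|^2 \les N M^{1+\eps_1} |A_\ld|^2 + M^{-1+\eps_2} |A_\ld|.
\]
Balancing by $N M^{1+\eps_1} \sim \ld^2$ (which forces $\ld \ges N^{1 + \eps_1/2}$ through the constraint $M \geq N$) yields
\[
|A_\ld| \les N^{(1-\eps_2)/(1+\eps_1)}\, \ld^{-2 - 2(1-\eps_2)/(1+\eps_1)},
\]
and the choice $\eps_1 = 2\eps$, $\eps_2 = 2\eps/(1 + 4\eps)$ recovers \eqref{BB1}.

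For (ii), I would follow Subsection \ref{SUBSEC:PROP2} with the kernel $\mathcal{K}(\bx, t) = \chi(t) K(x_1, t) K(x_2, t) R_{\theta_3}(x_3, t)$. Lemma \ref{LEM:Weyl} applied separately in $x_1$ and $x_2$ gives $|K(x_1, t) K(x_2, t)| \les 2^s N/M$ on the support of $\omega_{N, 2^s}$ with $M \leq 2^s \leq N$, so the pointwise bound \eqref{D8} on $\Ld_{M, s}$ improves to
\[
\|\Ld_{M, s}\|_{L^\infty_{t, \bx}} \les N \cdot \frac{2^s N}{M}.
\]
The time-frequency bounds \eqref{D12}--\eqref{D18} on $\widehat{\Ld}_{M, s}$ and $\|f * \Ld_{M, s}\|_{L^2}$ depend only on the temporal structure through Ramanujan sums and Lemma \ref{LEM:divisor}, and carry over unchanged. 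Re-interpolating \eqref{E4b}, \eqref{E5}, and \eqref{E7} with the new $L^1 \to L^\infty$ bound delivers sharper $L^p \to L^{p'}$ estimates on $f * \Ld_j$ for $j = 1, 2, 3$, and the admissible interpolation range for $\theta = 2/p'$ widens from $\theta < 1/3$ (which produced $p' > 6$ in Proposition \ref{PROP:level}(ii)) to $\theta < 1/2$. Running the three-case analysis at the end of Subsection \ref{SUBSEC:PROP2} then delivers \eqref{BB2} for every $q > 4$.

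The main obstacle is the bookkeeping in (ii): the interaction among the parameters $\al$, $\be$, $B$, $\dl_1$, $\dl_2$ introduced in \eqref{EE0}--\eqref{E11} must be re-examined with the upgraded pointwise bound on $\mathcal{K}$, and one has to verify that all three contributions in the analogue of \eqref{E11} close for every $\theta < 1/2$. The two-dimensional upgrade gains exactly one extra power of $(2^s N/M)^{1/2}$, and the calculation must confirm that this is sufficient to relax the admissible range of $\theta$ from $<1/3$ to $<1/2$. For (i), once the improved bound on $\|\mathbf{R}_1\|_{L^\infty_{t, \bx}}$ is in hand, the rest is mechanical.
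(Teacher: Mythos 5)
Your proposal is correct and follows essentially the same route as the paper's proof: both parts are obtained by rerunning the argument of Proposition \ref{PROP:level} with Weyl's inequality (resp.\ Lemma \ref{LEM:Weyl}) applied to \emph{both} rational factors $R_1(x_1,t)$, $R_1(x_2,t)$ (resp.\ $K(x_1,t)$, $K(x_2,t)$) and only the trivial bound on the $\theta_3$-factor, which upgrades \eqref{C3} to $\|\RR_1\|_{L^\infty}\les NM\log M$ and \eqref{D8} to $\les 2^sN^2/M$, exactly as in \eqref{BB3} and \eqref{BB6}. Your parameter choices in (i) reproduce \eqref{BB1}, and in (ii) the widened range $\theta<\tfrac12$ with the unchanged $L^2$-bounds \eqref{D13}, \eqref{D13a}, \eqref{D18} is precisely how the paper closes the three-case analysis for all $q>4$.
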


We first  prove  Proposition \ref{PROP:Bstrichartz}
assuming Lemma \ref{LEM:level2}.
Then, we sketch the proof of Theorem \ref{THM:5}.
We present the proof of  Lemma \ref{LEM:level2}
at the end of this appendix.

Given $f$ as in \eqref{C0},
let $F(\bx, t) = e^{-it \Dl} f(\bx, t)$.
By Cauchy-Schwarz inequality, we have
$\|F\|_{L^\infty_{t, \bx}} \les N^\frac{3}{2}. $
Given $p > \frac{14}{3}$, let $q \in (4, p)$.
Then,
by Lemma \ref{LEM:level2} and Theorem \ref{THM:1} (ii),
we have
\begin{align*}
& \int_{I \times \T^3 }  |F(\bx, t)|^p d\bx dt\\
& \hphantom{X}
\leq \bigg(\int_{ N^{1+\eps_1}\les |F| \les N^{\frac{3}{2}-\eps_2}}
+ \int_{ N^{\frac 32 - \eps_2}\les |F| \les N^{\frac{3}{2}}}\bigg)
|F(\bx, t)|^p d\bx dt
+ N^{(1+\eps_1)(p - 4)}\int |F(\bx, t)|^4 d\bx dt\\
& \hphantom{X}
 \les N^{1-\frac{4}{1+4\eps_1}\eps_1}
\int_{N^{1+\eps_1}}^{N^{\frac{3}{2}-\eps_2}}\ld^{p - 4+ \frac{8}{1+4\eps_1}\eps_1} d\ld
+  N^{\frac{3}{2}q - 5}
\int_{N^{\frac{3}{2}-\eps_2}}^{N^\frac{3}{2}}\ld^{p - 1 - q} d\ld
 + N^{(1+\eps_1)(p - 4)+ \frac 43+}\\
& \hphantom{X}
 \les N^{\frac{3}{2}p - 5},
\end{align*}

\noi
for sufficiently small $\eps_1, \eps_2 > 0$.
Here, the condition $p > \frac{14}{3}$ is needed in  the last inequality.
This proves
Proposition \ref{PROP:Bstrichartz}.


Next, we briefly discuss how  Theorem \ref{THM:5} follows from
 Proposition \ref{PROP:Bstrichartz}.
As in Section \ref{SEC:critical},
the goal is to prove the multilinear estimate \eqref{X0}
in Proposition \ref{PROP:XLWP}
for $ d = 3$ and $k=2$
(with $s_c =1$) under the assumption \eqref{eq:Q1}.
In view of the argument in Subsection \ref{SUBSEC:G3}
(which holds without any change even in this case),
it suffices to prove
the multilinear Strichartz estimate \eqref{G1} in
Proposition \ref{PROP:LWP2}
for $d = 3$ and $k = 2$ (with $s_c = 1$).
By repeating the proof of Lemma \ref{LEM:Gstr2}
with  Proposition \ref{PROP:Bstrichartz},
we see that \eqref{G4} holds
for $ p> \frac{14}{3}$
with $\dl \in (0,  \frac 12 -\frac7{3p})$.
In the proof of Proposition \ref{PROP:LWP2},
the only change appears in \eqref{G8}
and  we can choose $p$ such that
\[ \frac{14}{3} < p< \frac{20k}{3k+2}\]

\noi
in this case.
In particular, we can set $k = 2$
by choosing $p \in (\frac{14}{3}, 5)$.
The rest of the argument follows exactly as in Section \ref{SEC:critical}.

In the remaining part of the paper, we present the proof of Lemma \ref{LEM:level2}.

\begin{proof}[Proof of Lemma \ref{LEM:level2}]
The proof follows the proof of Proposition \ref{PROP:level}
with small modifications.
In the following, we only point out these modifications.
Given an interval $I \subset \R$,
assume that $I $ is centered at $0$.

\medskip

\noi
(i)
With $Q(\bn)$ in \eqref{eq:Q1},
define $\mathbf{R}$ as in \eqref{B4a}.
Then, we have
$\mathbf{R}(\bx, t) = \prod_{j = 1}^2 R_1(x_j, t)\cdot
R_{\theta_3}(x_3, t),$
where $R_\theta$ is defined in \eqref{B4b}.
We also define
$\mathbf{R}_1$ and $\mathbf{R}_2$ as in \eqref{C2a}.
Then, by \eqref{C2b}, we have
\begin{equation}
\|\RR_1\|_{L^\infty_{t, \bx}}
\les \min\big( N M\log M, N^3\big).
\label{BB3}
\end{equation}

\noi
Proceeding as in \eqref{C6} with \eqref{BB3} and \eqref{C5},
we have
\begin{align}
\ld^2 |A_\ld|^2
&  \leq \|\RR_1\|_{L^\infty_{t, \bx}} \|\Theta_\ld\|_{L^1_{t, \bx}}^2
+ \|\ft \RR_2\|_{L^\infty_\tau \l^\infty_\bn } \|\Theta_\ld\|_{L^2_{t, \bx}}^2  \notag \\
& \leq C_1 N  M^{1+\eps_1} |A_\ld|^2
+ M^{-1+\eps_2} |A_\ld|
\label{BB4}
\end{align}

\noi
for small $\eps_1, \eps_2 > 0$.
Choose $M \geq N$ such that
$N M^{1+\eps_1} \sim \ld^2.$
This condition  with $M \geq N$
implies that $\ld \ges N^{1 + \frac{\eps_1}{2}}$.
Then,
\eqref{BB4} yields
\begin{align*}
|A_\ld| \les \Big(\frac{N}{\ld^2}\Big)^\frac{1 - \eps_2}{1+\eps_1} \ld^{-2}
\les N^{\frac{1}{1+2\eps_1} } \ld^{-4+\frac{4}{1+2\eps_1}\eps_1} 
\end{align*}

\noi
by setting $\eps_2 = \eps_2(\eps_1)$ such that
$\frac{1 - \eps_2}{1+\eps_1} = \frac{1}{1+2\eps_1}$.
This proves \eqref{BB1} with $\eps =  \frac{\eps_1}{2}$.

\medskip

\noi
(ii)
Define $\K$ by
\begin{align}
\K(\bx, t) 
& = \chi(t) K(x_1, t) K(x_2, t)  \sum_{|n_3|\leq N} e^{2\pi i (n_3 x_3 + \theta_3 n_3^2 t)} ,
\label{BB5}
\end{align}

\noi
where  $K(x, t)$ is as in \eqref{D0a}.
Let $\Ld_{M, s}$ be as in \eqref{D7}.
Then, from Lemma \ref{LEM:Weyl}, \eqref{D4}, and \eqref{D5} with $M \leq 2^s \leq N$, we have
\begin{align}
|\Ld_{M, s}(\bx, t)|\les N \bigg(\frac{N}{M^\frac{1}{2} \big( 1 + (2^{-s} N)^\frac{1}{2}\big)}\bigg)^2
+ \frac{M^2}{2^sN} N^{2}
\les \frac{2^s N^2 }{M}.
\label{BB6}
\end{align}

\noi
Hence, from \eqref{BB6}, we have
\begin{equation}
\| f * \Ld_{M, s} \|_{L^\infty(I \times \T^d)}
\les \frac{2^s N^2 }{M}\|f\|_{L^1(I \times \T^d)}.
\label{BB7}
\end{equation}

\noi
Also, with $\Ld$ as in \eqref{E1} and $\K$ as in \eqref{BB5},
it follows from
 \eqref{D4} and  \eqref{D5} with $N_1 = \frac{1}{100}N$ that
\begin{equation}
\|\K - \Ld\|_{L^\infty(I\times\T^d)}
\les N^{2}.
\label{BB8}
\end{equation}

Let $p$ be as in \eqref{E4a}. 
In the following, we consider the case $\theta \geq \frac{1}{3}$.
With $M_1$ and  $M_2$ as in \eqref{EE0},
write $\Ld = \Ld_1 + \Ld_2+ \Ld_3$ as before.
See \eqref{EE0a}.

By interpolating \eqref{BB7} and \eqref{D13a}
and summing over dyadic $M\leq M_1$ and $2^s \leq N$, we have
\begin{align}
\| f *\Ld_1  \|_{L^{p'}(I \times \T^d)}
& \les M_1^{-1 + (3 +)\theta} N^{3 - 5\theta}
\| f \|_{L^{p}(I \times \T^d)},
\label{BB9}
\end{align}

\noi
since $\theta \in [\frac 13, \frac 12)$.

Next,
choose $D \sim M^\al$ for some small $\al = \al(\theta) > 0$,
and set $\beta \ll 1$ and $B \gg1$ such that
\[ \s : = -3 -\be +\al B - > 0.\]

\noi
Then,
by interpolating \eqref{BB7} and \eqref{D18}
and summing over dyadic $M \in ( M_1, M_2]$ and $2^s \leq N$, we have
\begin{align}
\| f * \Ld_2   \|_{L^{p'}(I \times \T^d)}
&  \les
N^{3 - 5 \theta}
 \| f \|_{L^{p}(I \times \T^d)} \notag \\
& + \Big( M_1^{-1 - \s \theta} N^{3 - \frac 52\theta}
+
M_2^{-1 + (3+  B+) \theta} N^{3 - \frac 72 \theta} \Big)
\|f\|_{L^1(I\times \T^d)},
\label{BB10}
\end{align}

\noi
as long as
$\theta < \frac{1}{2+\al +}.$
This can be guaranteed by choosing $\al = \al(\theta) > 0$ sufficiently small
for given $\theta < \frac{1}{2}$.

Lastly,
by interpolating \eqref{BB7} and \eqref{D13}
and  summing over $M \geq M_2 \sim N^{\dl_2}$ and $s$ with $2^s \leq N$,
we have
\begin{align}
\| f * \Ld_3   \|_{L^{p'}(I \times \T^d)}
& \les N^{3 - 5\theta}  
\| f \|_{L^{p}(I \times \T^d)},
\label{BB11}
\end{align}

\noi
as long as $\theta < \frac 12$.

Now, we are ready to prove the level set estimate \eqref{BB2} for $ q > 4$.
Then, proceeding as before
with \eqref{BB8}, \eqref{BB9}, \eqref{BB10}, and \eqref{BB11},
we have
\begin{align}
\ld^2|A_\ld|^2
& \leq
|\jb{ (\K - \Ld) * \Theta_\ld, \Theta_\ld  }|+ |\jb{ \Ld*\Theta_\ld, \Theta_\ld }|\notag \\
& \les N^{2} |A_\ld|^2
+  M_1^{-1 + (3+) \theta} N^{3 - 5 \theta} |A_\ld|^\frac{2}{p} \notag \\
& \hphantom{XXX}
+ M_1^{- 1- \s \theta} N^{3 - \frac 52 \theta}|A_\ld|^{1+\frac{1}{p}}
+ M_2^{ -1+(3+B+) \theta} N^{3 - \frac 72 \theta}
|A_\ld|^{1+\frac{1}{p}}.
\label{BB12}
\end{align}

\noi
Since $\ld \gg N$,
\eqref{BB12} reduces to
\begin{align}
\ld^2|A_\ld|^2
& \les
  M_1^{-1 + (3 +)\theta} N^{3 - 5 \theta} |A_\ld|^\frac{2}{p}
+ M_1^{- 1- \s \theta} N^{3 - \frac{5}{2} \theta}|A_\ld|^{1+\frac{1}{p}}\notag \\
& \hphantom{XXX}
+ M_2^{ -1+(3+B+) \theta} N^{ 3- \frac 7 2 \theta}
|A_\ld|^{1+\frac{1}{p}} \notag \\
%
& = : \I + \II + \III.
\label{BB13}
\end{align}

First, suppose that $\ld^2|A_\ld|^2 \les \I$ holds.
With  $p' \theta = 2$ and \eqref{EE0},
we have
\begin{align}
|A_\ld| \les
\bigg(\frac{N^\frac{3}{2}}{\ld}\bigg)^{(-\frac{p'}{2}+3+)\dl_1}
N^{\frac{3}{2}p' - 5 } \ld^{-p'}
\les
N^{\frac{3}{2}q - 5 } \ld^{-q}
\label{BB14}
\end{align}

\noi
for $ q > p'$ by choosing $\dl_1 = \dl_1(q, p')$ sufficiently small.
Next, suppose that
$\ld^2|A_\ld|^2 \les \II$ holds.
Then,  from  \eqref{EE0},
we have
\begin{align}
|A_\ld|
& \les
N^{\frac{3}{2}p' - 5 } \ld^{-p'}
\Big(N^{\frac{3}{2}p'} \ld^{-p'}M_1^{-p' - 2\s}\Big)
 \les N^{\frac 3 2 p' - 5} \ld^{-p'},
\label{BB15}
\end{align}

\noi
by making $\s = \s(p', \dl_1)$ in \eqref{E5a}
(and hence $B = B(p', \dl_1)$)
sufficiently large.
Lastly, suppose that
$\ld^2|A_\ld|^2 \les \III$ holds.
By $\ld \ges N^{\frac{3}{2}-\eps}$ and  \eqref{EE0},
 we have
\begin{align}
|A_\ld|
& \les
 N^{\frac 3 2 p' - 5} \ld^{-p'}
N^{-2 + \eps p' +(-p' + 6+ 2B+)\dl_2}
\les  N^{\frac 3 2 p' - 5} \ld^{-p'}
\label{BB16}
\end{align}

\noi
as long as
we have $\eps p' \leq 1$
and
$\dl_2 = \dl_2(p', B)$ is sufficiently small.

Finally, given $q > 4$,
we choose $\theta < \frac{1}{2}$ such that
$ q > p' = \frac{2}{\theta} > 4$.
Then,  \eqref{BB2} follows from \eqref{BB13}, \eqref{BB14}, \eqref{BB15}, and \eqref{BB16}
with $\ld \leq N^\frac{3}{2}$.
%
%
%
%
\end{proof}

%

\end{document}